\DeclareSymbolFont{cyrletters}{OT2}{wncyr}{m}{n}
\DeclareMathSymbol{\Sha}{\mathalpha}{cyrletters}{"58}
\numberwithin{equation}{section}
\newtheorem{thm}[equation]{Theorem}
\newtheorem{conj}[equation]{Conjecture}
\newtheorem*{conj*}{Conjecture}
\newtheorem{prop}[equation]{Proposition}
\theoremstyle{definition}
\newtheorem{eg}[equation]{Example}
\newtheorem{egs}[equation]{Examples}
\newtheorem{prob}[equation]{Problem}
\newtheorem{defn}[equation]{Definition}
\newcommand{\cZ}{Z}
\newcommand{\ed}{\mathrm{ed}}
\newcommand{\ol}[1]{\overline{#1}}
\newcommand{\op}{\mathrm{op}}
\newcommand{\ot}{\otimes}
\newcommand\ra{{\rightarrow}}
\newcommand{\rk}[1]{\operatorname{rank}_{#1}}
\newcommand{\sep}{{\mathrm{sep}}}
\newcommand{\Alg}{\mathrm{Alg}}
\newcommand\Brp[1][p]{{{}_{{#1}\!\!\!\:}\Br}}
\newcommand{\F}{\mathbb{F}}
\newcommand{\Fields}{\mathsf{Fields}}
\newcommand{\Gm}{\mathbb{G}_m}
\renewcommand{\H}{{\text{\rm H}}}
\newcommand{\OO}{\mathscr{O}}
\newcommand{\NS}{\mathrm{NS}}
\newcommand{\Pic}{\mathrm{Pic}}
\newcommand{\PPic}{\mathscr{P}\mathit{ic}}
\newcommand\R{{\mathbb{R}}}
\newcommand{\Sets}{\mathsf{Sets}}
\newcommand\UD{{\text{\it UD}}}
\newcommand\Z{{\mathbb{Z}}}
\def\tensor{{\otimes}}
\def\Q{{\mathbb{Q}}}
\DeclareMathOperator{\chr}{char}
\DeclareMathOperator{\charac}{char}
\DeclareMathOperator{\ind}{ind}
\DeclareMathOperator{\per}{per}
\DeclareMathOperator{\rdim}{rdim}
\DeclareMathOperator{\trdeg}{trdeg}
\DeclareMathOperator{\Br}{Br}
\DeclareMathOperator{\CP}{CPAlg}
\DeclareMathOperator{\Gal}{Gal}
\DeclareMathOperator{\GL}{GL}
\DeclareMathOperator{\Hom}{Hom}
\DeclareMathOperator{\M}{M}
\DeclareMathOperator{\PI}{Br.\!dim}
\DeclareMathOperator{\PGL}{PGL}
\DeclareMathOperator{\PSL}{PSL}
\DeclareMathOperator{\SL}{SL}
\DeclareMathOperator{\Spec}{Spec}
\DeclareMathOperator{\Spin}{Spin}
\DeclareMathOperator{\SK}{SK}
\long\def\forget#1\forgotten{}
\def\Wik{\subsection*{What is known}}
\begin{document}

\renewcommand{\thesubsection}{\thesection.\Alph{subsection}}

\title%
{Open problems on Central Simple Algebras}

\author{Asher Auel \and Eric Brussel \and Skip Garibaldi \and Uzi Vishne}

\address{(Auel, Brussel, Garibaldi) Department of Mathematics \& Computer Science, 400 Dowman Drive, Emory University, Atlanta, Georgia, USA 30322}

\address{(Vishne) Department of Mathematics \& Statistics, Bar-Ilan University, Ramat-Gan 52900, Israel}

\subjclass[2000]{16K20, 16K50, 16S35, 12G05, 14F22}


\begin{abstract}
We provide a survey of past research and a list of open problems regarding central simple algebras 
and the Brauer group over a field, intended both for experts and for beginners.
\end{abstract}

\maketitle


There are many accessible introductions to the theory of central
simple algebras and the Brauer group, such as \cite{Albert},
\cite{Hrstn:nc}, \cite{Pierce}, \cite{Draxl}, \cite[Ch.~7]{Rowen},
\cite{Kersten}, \cite{Jac:fd}, and \cite{GilleSz}---or at a more
advanced level, \cite{Salt:lect}. But there has not been a survey of
open problems for a while---the most prominent recent references are
the excellent surveys \cite{Am:surv}, \cite{Salt:rvw}, \cite{Salt:fd},
and \cite{Salt:Amsurv}. Since the last survey, major new threads have
appeared related to geometric techniques. As examples, we mention (in
chronological order): \begin{itemize} \item Saltman's results on
division algebras over the function field of a $p$-adic curve, see
\cite{Salt:ram}, \cite{Saltman:cyclic}, \cite{Br:surv}, \cite{PS:Qpt},
\cite{PS:Qpt2}; \item De Jong's result on the Brauer group of the
function field of a complex surface, see \cite{dJ},
\cite{Lieblich:PI}, and \S\ref{PI.sec}; \item
Harbater--Hartmann--Krashen patching techniques, see \cite{HHK:app} and
\cite{HHK:subfields}; and \item Merkurjev's bounding of essential
$p$-dimension of $\PGL_n$, see \cite{M:p2}, \cite{M:pn}, and
\S\ref{ed.sec}. \end{itemize}

Motivated by these and other developments, we now present an updated
list of open problems. Some of these problems are---or have
become---special cases of much more general problems for algebraic
groups. To keep our task manageable, we (mostly) restrict our
attention to central simple algebras and
$\PGL_n$.
The following is an idiosyncratic list that originated during
the Brauer group conference held at Kibbutz Ketura in January 2010. We
thank the participants in that problem session for their contributions
to this text. We especially thank Darrel Haile, Detlev Hoffmann,
Daniel Krashen, Dino Lorenzini, Alexander Merkurjev, Raman Parimala, Zinovy
Reichstein, Louis Rowen, David Saltman, Jack Sonn, Venapally Suresh, Jean-Pierre
Tignol, and Adrian Wadsworth for their advice, suggestions, and (in some cases)
contributions of text.

\setcounter{tocdepth}{1}
\tableofcontents

\setcounter{section}{-1}
\section{Background}\label{background}

\subsection*{Basic definitions}
Let $F$ be a field.
A ring $A$ is a \emph{central simple $F$-algebra} if $A$ is a finite dimensional $F$-vector
space, $A$ has center $F$, and $A$ has no nontrivial 2-sided ideals.
An \emph{$F$-division algebra} is a central simple $F$-algebra with no nontrivial 1-sided ideals.
Every central simple $F$-algebra $B$ is isomorphic to a matrix algebra over a unique
$F$-division algebra (Wedderburn's theorem), which is said to be {\it associated} with $B$.  
Two algebras are said to be (Brauer) \emph{equivalent} if their
respective associated division algebras are $F$-isomorphic.
Thus two algebras $A$ and $B$ are equivalent if there exist number $n$ and $m$ such that
$\M_n(A)=\M_m(B)$.
The resulting set of equivalence classes forms the (classical) Brauer group $\Br(F)$,
and this set inherits a group structure from the $F$-tensor product on algebras.
The \emph{index} of a central simple algebra is the degree of its associated division algebra,
and the \emph{period} (or \emph{exponent}) is the order of its Brauer class.

Recall that an \emph{\'etale} $F$-algebra is a finite product of finite separable field extensions of $F$.
Such an algebra $L$ is {\it Galois} with group $G$ if $\Spec L$ is a
$G$-torsor over $F$. In other words, $L/F$ is finite and separable, and $G$ acts on $L$
via $F$-automorphisms such that the induced action on $\Hom_{\text{$F$-alg.}}(L,F_\sep)$ is simple
and transitive, where $F_\sep$ is a separable closure of $F$.
If $L/F$ is $G$-Galois then there exists a subgroup $H$ of $G$ and an
$H$-Galois field extension $K/F$ such that $L$ is $G$-isomorphic to $\Hom_{\text{$F$-v.s.}}(F[G/H], K)$ with the induced action and pointwise product.  Equivalently, $L$ is $G$-isomorphic t
to
$\prod_{G/H}(K)_{sH}$, where $G/H$ is the coset space.  Compare \cite[\S{I.5}]{Milne}.

\subsection*{Definition of a crossed product}
The classical theory shows that each central simple $F$-algebra $B$ of degree $n$ contains a 
commutative \'etale subalgebra $T$ of degree $n$ over $F$, called a \emph{maximal torus}.
Equivalently, there exists an \'etale extension $T/F$ of degree $n$ that \emph{splits} $B$,
i.e., such that $B\otimes_F T=\M_n(T)$.
By definition, $B$ is a \emph{crossed product} if $B$ contains a maximal torus that
is \emph{Galois} over $F$.
Further, $B$ is \emph{cyclic} if $B$ contains a maximal torus that is cyclic Galois over $F$.
It is easy to see that every $B$ is Brauer-equivalent to a crossed product.
For if $L/F$ is the Galois closure of a maximal torus $T\subset B$ with respect to
a separable closure $F_\sep$, and $[L:T]=m$, then $L$ is a Galois maximal torus of 
$C=\M_m(B)$.  

Determining the crossed product central simple $F$-algebras $B$ with Brauer-equivalent division algebra $A$ is equivalent to determining the finite Galois splitting fields of $A$.  For $B$ is a $G$-crossed product via some $G$-Galois maximal torus $L/F$ if and only if $A$ is split by an $H$-Galois \emph{field} extension $K/F$, where $H \le G$ and $K/F$ where $K$ is the image of $L$ under an (any) $F$-algebra homomorphism from $L$ to $F_\sep$.
Thus, problems involving crossed products focus on the Galois splitting fields of division algebras.

It follows from the above that if $A$ is a crossed product or is cyclic, then so is every $B = \M_n(A)$. 
The converse turns out to be false; there exist noncrossed products, as we shall see below.
Moreover, in case $F$ has prime characteristic $p$, Albert proved that every $p$-algebra (i.e., central simple algebra of period a power of $p$) is Brauer-equivalent to a cyclic algebra.  But \cite{AmSalt} gives an example of a non-cyclic $p$-algebra.  One can lift this example to a complete discrete valuation ring of characteristic 0 and so find a central simple algebra $A$ over a field of characteristic 0 such that $\M_n(A)$ is cyclic for some $n$ but $A$ is not.

\subsection*{Cohomological interpretation}
Much of the interest in crossed products stems from the fact that 
the Galois structure of a crossed product allows us to describe it simply (see \S\ref{ed.sec})
and explicitly.
For if $B$ is a crossed product via a $G$-Galois extension $K/F$,
then $B$'s algebra structure is encoded in a Galois 2-cocycle
$f\in\cZ^2(G,K^\times)$, which is easily extracted from $B$ in principle.
Conversely, each $f\in\cZ^2(G,K^\times)$ defines a $G$-crossed product $B_f$
with maximal subfield $K/F$, by letting
$$
B_f=\bigoplus_{s\in G}K u_s
$$
be a $K$-vector space on formal basis elements $u_s$,
and defining multiplication by
$u_s u_t=f(s,t)u_{st}$ and $u_s x=s(x) u_s$ for all $s,t\in G$ and $x\in K$.
The equivalence relation on central simple algebras
induces an equivalence on cocycles, and the resulting group of equivalence classes
is the Galois cohomology group $\H^2(G,K^\times)$.
Note that every central simple algebra is a crossed product if and only if
the correspondence between classes extends to the level of cocycles and algebras.

\subsection*{The crossed product problem for division algebras}
Since every algebra representing a given class is a crossed product if and only if its associated
division algebra is a crossed product,
the interesting question is whether every $F$-\emph{division algebra} is a crossed product.

Fields over which all division algebras are known to be crossed products 
include those that do not support division
algebras in the first place (fields of cohomological dimension less than 2, such
as algebraically closed fields and quasi-finite fields),
those over which all division algebras are cyclic (global fields and local fields),
and those over which all division algebras are abelian crossed products (strictly henselian fields \cite[Th.~1]{B:strictly}, at least for algebras of period prime to the residue characteristic).

For much of the 20th century it was reasonable to conjecture that
every central simple algebra was itself a crossed product.  Then, in
1972, Amitsur produced noncrossed product division algebras of any
degree $n$ divisible by $2^3$ or $p^2$ for an odd prime $p$, provided
$n$ is prime to the characteristic.  The centers of these (universal)
division algebras are given as invariant fields; their precise nature
is itself a topic of interest (see \S\ref{center.sec}).  In
\cite{SS73} Schacher and Small extended Amitsur's result to include
the case where the characteristic is positive and does not divide the
degree.  In \cite{Salt:non}, Saltman included the $p$-algebra case,
when $n$ is divisible by $p^3$.  Subsequent constructions of
noncrossed products---none improving on the indexes found by Amitsur---have since appeared in \cite{Risman:cyclic}, \cite{Salt:small},
\cite{Rowen:counter}, \cite{Tignol:cyclic}, \cite{JW:non},
\cite{B:thesis}, \cite[Th.~1.3]{RY:splitting}, \cite{BMT}, and others.
The most explicit construction to date is in \cite{Hanke}.

Responding to the sensitivity of Amitsur's degree $p^2$ examples to the ground field's
characteristic and roots of unity, Saltman and Rowen observed that every division algebra of degree
$p^2$ becomes a crossed product on some prime-to-$p$ extension (the degree $p$ case being trivial), 
whereas for degree $p^3$ and above, 
examples exist that are stable under such extensions (\cite{RowS:prime}); see \cite[Cor.~2.2.2]{McK07} for the $p$-algebra case.

\section{Non-cyclic algebras in prime degree}\label{Q1}

Perhaps the most important open problem in the theory of central simple algebras is:

\begin{prob} \label{cyclic}
Given a prime $p$, construct a non-cyclic division algebra of degree $p$ over some field $F$.
\end{prob}

This was listed as Problem 7 in Amitsur's survey \cite{Am:surv},
and Problem 1 in \cite{Salt:fd}.
Of course, division algebras of non-prime degree need not be cyclic.  This is true already in the smallest possible case of degree 4 (\cite{Albert:deg4eg}).

\Wik Prime-degree division algebras for the two smallest primes are always cyclic:
The $p=2$ case is elementary, and the $p=3$ case was solved by Wedderburn in 1921 (\cite{Wedderburn}). 
A result that should be included with these two is
that any division algebra of degree $n=4$ is a crossed product (\cite{Albert}).
In 1938, Brauer proved that any division algebra of degree $p=5$ has a (solvable) Galois
splitting field of degree dividing 60 (\cite{Brauer5}); this result was improved somewhat in \cite{RowS:semi}.
Despite the promising start, this vexing problem remains completely open for $p>3$.  Specific candidates for prime degree non-crossed products have been proposed in \cite[\S4]{Rowen:palg} and \cite[p.~487]{Vishne}.

It is natural to specialize the problem to fields with known properties.
For some fields $F$, one knows that every division algebra of degree $p$ is cyclic:
when $F$ has no separable extensions of degree $p$ (trivial case), when $F$ is a local field or a global field (by Albert--Brauer--Hasse--Noether), and when $F$ is the function field of a $\ell$-adic curve for $\ell$ prime and different from $p$ \cite{Saltman:cyclic}.  

\subsection*{Generalization} 
In 1934, Albert proved that an $F$-division algebra $A$ of prime degree $p$ is a crossed
product if and only if $A$ contains a non-central element $x$ such that $x^p$ is in $F$, called
a \emph{$p$-central element} (``Albert's cyclicity criterion'', \cite[XI, Th.~4.4]{Albert}).
It is natural to ask whether such elements exist in algebras of degree greater than $p$. When $p = 2$, the degree is 4, and the characteristic is not 2, Albert's crossed product result shows that they do exist. On the other hand, it is shown in \cite{AmSalt} that there exist algebras of degree $p^2$ and characteristic $p$ with no $p$-central elements.  Using this, it is shown in \cite{Sal80} that for $n$ a multiple of $p^2$,
the universal division algebra of degree $n$ over the rational field $\Q$ has no $p$-central elements. When $F$ has a primitive $p$-th root of unity, there are no known examples of division $F$-algebras of degree a multiple of $p$ that do not have $p$-central elements.

\section{Other problems regarding crossed products} \label{Q2}

The central organizing problem in the theory of central simple algebras and the Brauer
group is to determine, possibly for a given field $F$,
the extent to which $F$-central simple algebras are crossed products, and in particular, cyclic crossed products. 
There are still some important unresolved problems in addition to Problem \ref{cyclic}.

\begin{prob} \label{Q2.prob2}
Determine whether $F$-division algebras of prime period are crossed products.  
\end{prob}

Both Problems \ref{cyclic} and \ref{Q2.prob2} are related to the problem of computing the essential dimension of algebraic groups described in \S\ref{ed.sec}.

\subsection*{What is known}
This problem is even more open than Problem \ref{cyclic},
in the sense that no noncrossed products of prime period have
ever been discovered, and no one has shown that all division algebras of a fixed prime period
must be crossed products.  
For odd primes $p$ the problem is completely open.  (The proof of Theorem 6 in \cite{Row82} concerning the existence of noncrossed products
of prime period is incorrect.
In the modified version that appeared later as \cite[Th.~7.3.31]{Row:RT2},
the flaw appears in the last paragraphs of page 255.)

Division algebras of $(\text{period, index})=(2,2^n)$ are known to be crossed products for $n\leq 3$,
and the problem for $n>3$ is open.
Albert proved the $(2,4)$ case (``Albert's Theorem'') 
in \cite[Th.~6]{Albert:deg4}---see alternatively \cite[5.6.9]{Jac:fd}---and
in a second proof \cite{Albert:deg4eg}, he showed that such algebras need not be cyclic.
Rowen proved the $(2,8)$ case for any field in \cite{Rowen:CSA} and \cite{Rowen:22}, see also \cite[5.6.10]{Jac:fd}.
Rowen \cite{Rowen:counter} and later Tignol \cite{Tignol:cyclic} proved that Saltman's universal division algebra of type $(4,8)$ is a noncrossed product.

Regarding specific fields,  
Brussel and Tengan recently proved that division algebras of type $(p,p^2)$ for
$p\neq\ell$ over the function field of an $\ell$-adic curve are crossed products in \cite{BT10}
(cf.\ Saltman's results on these fields, mentioned in the previous section).

As mentioned above, the smallest non-crossed product $p$-algebras known have degree $p^3$.  Therefore even the following is open:

\begin{prob}
Determine whether $p$-algebras of degree $p^2$ are crossed products.
\end{prob}

In view of the fact that all known examples of non-crossed product algebras occur over fields of cohomological dimension at least 3, it is natural to ask:
\begin{prob}
If $F$ has cohomological dimension at most 2, is every central simple $F$-algebra a crossed product?
\end{prob}

The answer to this question is unknown, even when $F$ is the function field of a complex surface.

\subsection*{Smallest Galois splitting fields}
One way to study the failure of a division algebra to be a crossed product
is to determine,
for a given division algebra $A$,
the smallest Brauer-equivalent crossed product(s) $B=\M_r(A)$.
As noted already, it is equivalent
to determine the finite Galois splitting fields for $A$.
As Brauer--Hasse--Noether knew in the 1920's, ``minimal'' Galois splitting fields
can have arbitrarily large degree (Roquette cites \cite{B-N} and \cite{Hasse} in \cite[Chap.~7.1]{Roq:BHN}), 
so it is important to specify that the degree (rather than the splitting field) be minimal.

To simplify the exposition,
we say a finite group $G$ \emph{splits} a central simple $F$-algebra $A$
if there exists a $G$-Galois field extension of $F$ that splits $A$
(equivalently, if some $\M_m(A)$ is a $G$-crossed product).
Since a splitting field of $A$ may or may not contain a maximal subfield of $A$, the problem with
respect to division algebras divides into parts.

\begin{prob} \label{Q2.prob3}
Determine the smallest splitting group(s) of an $F$-division algebra $A$ arising
from the Galois closure of a maximal subfield of $A$.  Determine in particular whether 
every $A$ of degree $n$ has a separable maximal subfield whose Galois closure has group the alternating group $A_n$.
\end{prob}


\begin{prob} \label{Q2.prob5}
Determine whether every division algebra is split by an abelian group,
and find degree bounds over specific fields.
\end{prob}

\Wik  
Both problems are related to Problem 5 in \cite{Am:surv},
and Problem~\ref{Q2.prob5} is Question 1 in \cite{Am:surv}.
In the situation where $A$ is a crossed product,
Problem \ref{Q2.prob3} becomes a problem of \emph{group admissibility}.
It then has a strong number and group-theoretic character, 
and we put it in Problem~\ref{sch.prob} below.

If $A$ has degree $n$, then $A$ contains a maximal separable
subfield (of degree $n$) that is contained in an $S_n$-Galois maximal torus,
which bounds the answer to the first part of Problem \ref{Q2.prob3} from above.
It is clear that these are in general not hard bounds,
since if $n=3$ then $C_3$ splits $A$ by Wedderburn's results.
It is unknown whether there exists a (noncrossed product) division algebra of degree $n > 4$
that is split by no proper subgroup of $S_n$, 
prompting Amitsur to ask Problem \ref{Q2.prob3}.

With the additional structure provided by an involution of the first kind,
some positive results for Problem~\ref{Q2.prob3} can be proved in the period $2$ case, for $\chr F\ne 2$.
In particular, if $A$ has period $2$ and $2$-power degree $2m$, one can prove the existence
of a subfield of degree $m$ in a maximal separable subfield of $A$, and it follows that
the Galois closure has group $H=C_2^m\rtimes S_m$.
Using the main result of Parimala--Sridharan--Suresh
in \cite{PSS:disc},  one can improve this to $\bar H=C_2^{m-1}\rtimes S_m$; we leave the details to the interested reader.
Note that $H$ is not contained in $A_{2m}$, but $\bar H$ is.

Amitsur proved that if the universal division algebra (defined in \S\ref{center.sec} below) $\UD(k,n)$ over an infinite field $k$
is split by a group $G$,
then every division algebra over a field $F$ containing $k$ is split by (a subgroup of) $G$
(unpublished, see \cite[p.15]{Am:surv} and \cite[7.1]{TigAm}).
Thus Problem~\ref{Q2.prob5} is intimately connected to the problem of splitting
the universal division algebra, which is Problem 8 in \cite{Am:surv}.
In \cite{Am:surv} Amitsur remarked that if $n$ is the composite of $r$ relatively prime numbers $n_i$,
then $S_{n_1}\times\cdots\times S_{n_r}$ splits $\UD(k,n)$.
Then in \cite[Th.~7.3]{TigAm} Tignol and Amitsur established a remarkable lower bound on the order of a splitting
group of $\UD(k,n)$, for infinite $k$, and $n$ not divisible by $\chr k$.
See \cite[Cor.~9.4]{TA86} and \cite[Th.~1.3]{RY:splitting} 
for improvements and alternative proofs of this bound in the case where $n$ is a prime power.

If $\chr F$ does not divide the period $n$ of an $F$-division algebra $A$,
the Merkurjev--Suslin theorem shows that 
$A$ has a meta-abelian splitting field,
obtained by adjoining $n$-th roots of unity, if necessary, and then a Kummer extension.
This theorem solved Amitsur's Question 3 in \cite{Am:surv}, and it solves 
the first part of Problem~\ref{Q2.prob5} if $F$ contains the $n$-th roots of unity.
Results bounding the ``symbol length'' over a particular field
provide upper bounds for the degree of abelian (or meta-abelian) Galois splitting fields,
and for this we refer to Problem~\ref{lengthbounds}.
Tignol and Amitsur submitted a lower bound for the order of an abelian
splitting group in \cite[7.5]{TigAm}.

When $F$ does not contain $n$-th roots of unity, 
Problem~\ref{Q2.prob5} has a similar relationship to the more general open problem of determining
whether the Brauer group is generated by cyclic classes, and for this we refer to \S\ref{cyclic.gen.sec}.  
This latter problem was Question 2 in \cite{Am:surv}.
The problem is settled for the 2-, 3-, and 5-torsion subgroups of $\Br(F)$.

\subsection*{$p$-algebras} Somewhat more is known about Problem~\ref{Q2.prob5} for $p$-algebras:
If $\chr F=p$ and $A$ is an $F$-division algebra of $p$-power degree, then 
Albert showed that $A$ has a cyclic splitting field in \cite{Albert:cyclics}, see also \cite[Ch.~VII, Th.~9.31]{Albert}.
Louis Isaac Gordon proved the existence of noncyclic $2$-algebras (of period and index $4$) in \cite{Gordon}
and Amitsur and Saltman proved the existence of (generic) non-cyclic division $p$-algebras
of every degree $p^n$ ($n\ge 2$) in \cite{AmSalt}.
These results raised questions about bounds of cyclic splitting fields.

Albert's methods impose such bounds on $p$-algebras $A$ with fixed ($p$-power) period $q$,
provided $F$ is finitely generated of transcendence degree $r$ over a perfect field $k$.
For then $F^{1/q}$ splits $A$ by \cite[VII.7, Theorem 2]{Albert}, and since $[F^{1/q}:F]=q^r$ 
(\cite[A.V.100, Proposition 4]{Bou:alg2} and \cite[A.V.135, Corollary 3]{Bou:alg2}),
$A$ is similar to a tensor product of $r$ cyclic $p$-algebras of degree $q$ by \cite[VII.9, Theorem 28]{Albert}, hence $A$ is split by a cyclic extension of degree $q^r$, by \cite[VII.9, Lemma 13]{Albert}.
The center $Z(k,n)$ of generic degree-$n$ matrices over a field $k$ is
known to be finitely generated (\cite[Cor.~14.9]{Salt:lect});
if $k$ is a finite field of characteristic $p$, and $Z(k,n)$ has transcendence degree $m$
over $k$, then it follows that the universal division algebra
$\UD(k,p^n)$ is split by a cyclic field extension of degree $p^{mn}$,
and hence for any field $F$ containing $k$, any $F$-division algebra $A$ of degree $p^n$ is
split by a cyclic \'etale extension of degree $p^{mn}$ over $F$, by \cite[p.15]{Am:surv}.

Because of this connection between the degree of a cyclic splitting field
of a $p$-algebra and the transcendence degree of the center of generic matrices over the prime field,
Problem~\ref{Q2.prob5} for $p$-algebras is closely tied to \S\ref{ed.sec} on essential dimension.
As for known bounds, we have $m \leq p^{2n}+1$ from general principles, and  if $p$ is odd and $p^n\geq 5$, then $m\leq(1/2)(p^n-1)(p^n-2)$ by \cite{LRRS}.
(Note the $p^n=2,3$ cases are settled by Wedderburn's cyclicity results.)

\subsection*{Descending from prime-to-$p$ extensions} \label{PIp.trdeg}
Since every division algebra has a maximal separable subfield,
every division algebra of prime degree $p$
is a crossed product after scalar extension to a prime-to-$p$ field extension.
This has led to some attempts at ``descent'':
If $A$ is an $F$-division algebra
of prime degree $p$, and $K/F$ is a Galois splitting field for $A$
whose group $G$ is a semidirect product $G=C_p\rtimes H$ with $C_p$ normal in $G$
and $H$ of order $m$ prime-to-$p$,
then $A\otimes_F K^{C_p}$ is a $C_p$-crossed product division algebra,
and we can ask whether the Galois structure descends to $A$.
This idea goes back to Albert (\cite{Alb:38}).
In the particularly interesting case where $H=C_m$ and $K^{C_p}=F(\zeta_p)$ 
for a primitive $p$-th root of unity $\zeta_p$,
we call $A$ a \emph{quasi-symbol}, after \cite{Vishne}.  
Note in this case $\chr F\neq p$.

The problem below is strongly related to Problems 3, 4, and 5 in \cite{Salt:fd}.
As Saltman noted in \cite{Salt:fd}, showing such crossed products are cyclic 
is equivalent to showing that the $G$-crossed
product $\M_m(A)$ is also a $G'$-crossed product, where $G'$ is a group
that has $C_p$ as an image.
As in \cite{TigAm}, 
write $G\Rightarrow G'$ if every $G$-crossed product is necessarily a $G'$-crossed product,
and $G\Rightarrow_F G'$ if $G\Rightarrow G'$ for $G$-crossed products whose centers contain $F$.

\begin{prob} \label{Q2.prob6}
Determine groups $G$ and $G'$ and conditions on $F$ under which $G\Rightarrow_F G'$.
In particular: 
\begin{enumerate}
\item 
Determine, for $m$ prime-to-$p$, whether $C_p\rtimes C_m\Rightarrow_F C_p\times C_m$.
\item 
Determine whether all quasi-symbols are cyclic.
\end{enumerate}
\end{prob}

\Wik
Background on the first part of Problem~\ref{Q2.prob6}, as well as a basic
symmetry result for abelian groups, may be found in \cite{TigAm}.
We will say that a finite group $G$ {\it splits} an $F$-algebra $A$ if $A$ is split
by a $G$-Galois extension of $F$.
Note that in \ref{Q2.prob6}(1), $C_p\rtimes C_m\Rightarrow_F C_p\times C_m$ if and only if
all $F$-algebras $A$ of prime degree $p$ that are split by $C_p\rtimes C_m$ are cyclic.

In 1938 Albert showed that if $\chr F=p$, and $A$ is a $p$-algebra of degree $p$ split by a
Galois extension whose group is of the form $C_p\rtimes C_m$ for $m$ prime-to-$p$,
then $A$ is cyclic (\cite{Alb:38}), hence $C_p\rtimes C_m\Rightarrow_F C_p\times C_m$ in this case.
He actually proved more generally that for $p$ dividing $n$ and arbitrary $m$, any
$C_n\rtimes C_m$-crossed product is abelian, again for $\chr F=p$
(see \cite{AravireJacob} for a different proof).
In 1999 Rowen 
proved that if $A$ is a $p$-algebra of odd degree $p$ that
becomes equivalent to a certain type (``Type A'') of $p$-symbol
over a quadratic extension $E/F$, then $A$ is already cyclic over $F$ (\cite[Th.~3.4]{Rowen:palg}).
This generalized Wedderburn's degree 3 theorem (in characteristic 3) and 
proved the cyclicity of a certain $p$-algebra of degree $p$ studied by Albert.
Rowen constructed a generic $p$-algebra of degree $p$ that becomes a symbol over a quadratic
extension, and conjectured it to be noncyclic for $p>3$
(\cite[\S4]{Rowen:palg}).

In 1982, Rowen and Saltman
showed that if $n$ is odd, $F$ contains a primitive $n$-th root of unity,
(so $\chr F$ does not divide $n$), and $A$ is a division algebra of degree $n$ that
is split by the dihedral group $D_n=C_n\rtimes C_2$ of order $2n$, then $A$ is cyclic (\cite{RowS:dih}).
Thus $D_n\Rightarrow_F C_n\times C_2$ in this case.
Their techniques generalized those used by Albert to prove cyclicity in degree three, in \cite{Albert}.
Mammone and Tignol proved the same result using different methods in \cite{MammoneTig},
and Haile gave another proof in \cite{Haile:dih}.
In 1996, Haile, Knus, Rost, and Tignol proved that when $\chr F\neq 2$,
$n$ is odd, and $\mu_n=\Z/n(t)$ for some $t\in F^\times$,
then any $F$-division algebra of degree $n$ split by $D_n$ is cyclic (\cite[Cor.~30]{HKRT}).
Vishne proved this more generally for $n$ odd and $[F(\mu_n):F]\leq 2$, 
while removing the restrictions on $\chr F$ (\cite[Th.~13.6]{Vishne}).
In the $n=5$ case, Matzri showed how to adapt Rowen and Saltman's proof to remove
the roots of unity and characteristic hypotheses (\cite{Matzri:deg5}),
before using results of \cite{Vishne} 
to prove that $\Brp[5](F)$ is generated by cyclic classes when $\chr F\neq 5$.

In 1983 Merkurjev, in a paper that heavily influenced \cite{Vishne} and \cite{Matzri:deg5},
proved a criterion for quasi-symbols of prime degree to be cyclic.  
To state this result, we fix some notation and hypotheses.
Let $p$ be a prime, $n=p^r$, $\zeta_n$ a primitive $n$-th root of unity, $E=F(\zeta_p)$,
$\nu:\Gal(E/F)\to(\Z/p)^\times$ the canonical map, and $\varphi:\Gal(E/F)\to(\Z/p)^\times$ a character.
Then consider the hypothesis
\begin{equation} \label{Q2.star}
\parbox{4.2in}{$\zeta_n\in E$ and $K/F$ is a $C_n\rtimes_\varphi C_m$-Galois extension, with $E=K^{C_n}$.}
\end{equation}
In this notation, a quasi-symbol of degree $p$
split by $K$ is said to be of {\it type} $(\varphi,\varphi')$, where $\varphi'=\nu\varphi^{-1}$.
Merkurjev's criterion is that a quasi-symbol of degree $n=p$ is cyclic if and only
if it is of type $(\nu,1)$ or $(1,\nu)$ (\cite{M:Brauer}).
Vishne, expanding on Merkurjev's framework, 
proved that Merkurjev's cyclicity criterion holds for quasi-symbols of degrees $n=p^r$ under \eqref{Q2.star},
and constructed a generic quasi-symbol, which he conjectured to be noncyclic.
He also proved that the type of a quasi-symbol (of degree $n=p^r$)
is symmetric in $\varphi$ and $\varphi'$, so that a quasi-symbol split by
$C_n\rtimes_\varphi C_m$ is also split by $C_n\rtimes_{\varphi'}C_m$.

In 1996 Rowen and Saltman proved that if $F$ contains a $p$-th root of unity,
and $A$ is a division algebra of degree $p$ that is split by the semidirect product
$C_p\rtimes C_m$ for $m=2,3,4$, or $6$ and dividing $p-1$, then $A$ is cyclic.
Thus $C_p\rtimes C_m\Rightarrow_F C_p\times C_m$ in this case.
Vishne extended this result for quasi-symbols of degree $n=p^r$ under \eqref{Q2.star}, 
for any $m$ dividing $p-1$ (\cite{Vishne}).

If $m$ divides $n$ and $F$ contains the $m$-th roots of unity, then it is elementary to show
$C_n\Rightarrow_F C_{n/m}\times C_m$ (see \cite{Vishne:dih} for proof).
This is best possible, since there are division algebras $D/F$ where the absolute Galois group of $F$ is rank 2 abelian.
As for $p$-algebras,
Saltman proved in \cite{Salt:spl} that if $\chr F=p$ and $n$ is a power of $p$,
then $C_n\Rightarrow_F G$ for any group $G$ of order $n$.
Finally, Vishne proved in \cite{Vishne:dih} that if $\chr F$ does not divide $n$
and $F$ contains the $n/m$-th roots of unity, then 
$D_n\Rightarrow_F D_m\times C_{n/m}$ holds for classes of period 2
(\cite{Vishne:dih}).

We say a group $G$ is {\it rigid} (resp.~\emph{rigid over $F$})
if there exists a central simple algebra (resp.~\emph{over $F$}) that is a crossed product
with respect to $G$ and $G$ only.
A central simple algebra can be a crossed product with respect to many different groups;
Schacher showed in \cite{Schacher:th1} that for any number $n$ there exists a number field $F$
and a central simple $F$-algebra $A$ that is a crossed product with respect to every group of
order $n$ (see Problem~\ref{sch.prob}).  
At the other extreme, it is obvious that cyclic groups of prime order are rigid.
Amitsur (\cite{Am:central}) proved that elementary abelian groups are rigid, and used this fact
in his proof that the generic division algebra is not a crossed product.
Saltman and Amitsur--Tignol proved every noncyclic abelian group is rigid in \cite{Salt:non} and
\cite{TigAm}.
For a time there were no known nonabelian rigid groups, and their existence
is Problem 4 in \cite{Salt:fd}.  
However in 1995 Brussel gave examples of rigid nonabelian groups over the fields $\Q(t)$, 
proving that for every prime $p$ the group 
$$
G=C_{p^2}\rtimes C_p= \{x,y \mid |x|=p^2,\ |y|=p,\ yxy^{-1}=x^{p+1} \}
$$ 
is rigid (\cite{B:thesis}),
and later that when $p$ is odd, another semidirect product $G=C_{p^3}\rtimes C_p$
is rigid (\cite{B:embed}).
All of Brussel's examples depend on the absence of roots of unity in the ground field,
and it is unknown whether the same results hold without this assumption.
A rigid group of type $C_p\rtimes C_m$ with $m$ prime-to-$p$ would settle Problem \ref{cyclic}.

\subsection*{Footnote}
In the end, the goal of the problems described in this section is to
determine the nature of Galois splitting fields of smallest degree for
cohomology classes of various kinds. But from an algebra-theoretic
point of view, the importance of the crossed product problem is due at
least partly to a long history of
attempted solutions, together with
a sentiment expressed by
A.A.~Albert, who spent much of his career studying it.
\begin{quote}
The importance of crossed products is due not merely to the fact
that up to the present they are the only [central] simple algebras which have actually been 
constructed but also to {\rm Theorem 1}:  Every [central] simple algebra is similar to a 
crossed product.  --- Albert, 1939 \cite[\S{V.3}]{Albert}
\end{quote}

\section{Generation by cyclic algebras} \label{cyclic.gen.sec}

By far the most elementary central simple algebras known are the cyclic algebras,
which were the first to be studied, and 
which to this day have provided an indispensable tool for investigating
central simple algebras and the Brauer group.  
Certainly 
the simplest expression of a Brauer class that one could hope for
would be as a sum of classes of cyclic algebras,
and in fact most (if not all) known descriptions of
the Brauer group are direct sum decompositions into subgroups
consisting of cyclic classes (or \emph{cyclic factors}).  For example,
by Auslander--Brumer and Fadeev's theorem (see \cite[Theorem 9.2]{GMS}), the
Brauer group of a rational function field in one variable is a direct
sum of (corestrictions of) cyclic factors.  Also, many
valuation-theoretic results on noncrossed products and indecomposable
algebras rely on Witt's theorem (see \cite[7.9]{GMS}), which presents the Brauer
group of a complete discretely valued field of rank one as a direct
sum of two cyclic factors.
For these reasons, one of the most important practical problems in the
study of the Brauer group is to determine whether or not the Brauer
group is generated by the classes of cyclic algebras.
This appears as Question 2 in \cite{Am:surv}.
Authors who studied this problem in the 1930's in one form or another 
include Albert (\cite{Albert:cyclics}), Teichm\"uller (\cite{Tei36}), 
Nakayama (\cite{Nak38}), and Witt (\cite{Witt37}).
For local and global fields, the question has an affirmative answer by
the well-known theorems of Hasse and Albert--Brauer--Hasse--Noether, respectively.
There are no known counterexamples.

After Amitsur's work on noncrossed products, an affirmative answer for
general $F$ might well have seemed unlikely.  But in a seminal
breakthrough, Merkurjev and Suslin proved in 1983 that if $F$ contains
the $n$-th roots of unity, then $\Brp[n](F)$ is generated by the
classes of (cyclic) symbol algebras of degree $n$, via the norm
residue homomorphism in $K$-theory (\cite{MS:Kcoh}).  
The essence of their result was that when $F$ contains the $n$-th roots of unity,
the composition
$$\H^1(F,\mu_n)\otimes \H^1(F,\mu_n)\to \H^2(F,\mu_n^{\otimes 2})\to \H^2(F,\mu_n) = \Brp[n](F)$$
is surjective, where the first map is the cup product, and the second map is cup product with a chosen
generator of $\H^0(F,\mu_n^{\otimes(-1)})=\Hom(\mu_n,\Z/n)$.
Thus $\Brp[n](F)$ is ``generated in degree one''.
This work strongly suggests the following natural question, which was posed
in the 1930's by Albert:

\begin{prob}\label{Gen2} {\textrm{\cite[p.~126]{Albert:cyclics}}}
Is the $n$-torsion subgroup $\Brp[n](F)$ generated by the classes of cyclic algebras of degree (dividing) $n$?
\end{prob}

Problem \ref{Gen2} has an affirmative answer in the following
important cases:
\begin{enumerate}
\item \emph{$n$ is a power of the characteristic of $F$.}  This
follows from \cite[VII.7, Theorem 2]{Albert} and \cite[VII.9, Theorem
28]{Albert}.

\item \emph{$n$ divides $30$.}  By the divisibility remark above, it
suffices to note that the answer is ``yes'' for $n = 2$ \cite{M:mt},
$n=3$ by \cite{M:Brauer}, and $n = 5$ by \cite{Matzri:deg5}.

\item \emph{$n$ is prime and adjoining a primitive $n$-th root unity
yields an extension of degree $\le 3$} by \cite{M:Brauer}.  This leads
to the results summarized in the previous item.

\item \emph{$F$ contains a primitive $n$-th root of unity.}  This was pointed out above.
The Merkurjev-Suslin theorem, which establishes an isomorphism between
$K^M_2(F) / nK^M_2(F)$ and $\H^2(F,\mu_n^{\otimes 2})$, is now
a special case of the recently-proved Bloch--Kato Conjecture.
\end{enumerate}

The smallest open case for $n$ prime is therefore:
\begin{prob}\label{Gen7}
Is $\Brp[7](F)$ generated by cyclic algebras of degree $7$ when 
$\chr F\neq 7$ and $F$ does not contain a primitive $7$-th root of unity?
\end{prob}

(These problems are related to the following very
special question stated by M.~Mahdavi-Hezavehi: \emph{If the
multiplicative group $F^\times$ is divisible, does it follow that
$\Br(F)$ is zero?}  The answer is ``yes'' if $\Br(F)$ is generated by
cyclic algebras, because cyclic algebras are split over such a field.
Therefore the answer is also ``yes'' if $F$ has characteristic zero,
because in that case $F$ has all the roots of unity so the Merkurjev--Suslin theorem applies.)

For composite $n$, even the following is unclear:
\begin{prob}
Is $\Brp[4](F)$ generated by cyclic algebras of degree $4$ when $\charac F \neq 2$ and $\sqrt{-1} \not \in F$?
\end{prob}

And one can ask the weaker question:
\begin{prob} \label{gen.deg}
Is $\Brp[n](F)$ generated by algebras of degree $n$?
\end{prob}

When $n$ is prime, the answer to Problem \ref{gen.deg} is ``yes'' by
\cite{M:Brauer}.  More generally, Merkurjev proved in \cite{M:struct}
that if $n$ is a power of an odd prime $p$ not equal to $\chr F$, or if
$n$ is a power of $2 =: p$ and $\sqrt{-1}\in F$, then $\Brp[n](F)$ is
generated by quasi-symbols (see \S\ref{Q2.prob6}) whose index is
bounded by $n^{n/p}$.  
This result reduces Problem \ref{Gen2} to the determination of whether classes of
quasi-symbols are generated by cyclic classes.  Problem~\ref{gen.deg}
is already open in the case $n = 4$.

\smallskip

\subsection*{Symbol length} \label{length} Once it is known that
$\Brp[n](F)$ is generated by classes of cyclic algebras, the focus
turns to the 
the minimal number of cyclic algebras (of fixed
degree) needed to represent it in the Brauer group.

Suppose $m$ and $n$
have the same prime factors, $m\divides n$,
and $\Brp[m](F)$ is generated by cyclic algebras.  Let $\ell_F(m,n)$
denote the minimal number such that every central simple $F$-algebra
of degree $n$ and period (dividing) $m$ is similar to a tensor
product of $\ell_F(m,n)$ cyclic algebras of degree $m$, unless no such bound exists,
in which case set $\ell_F(m,n)=\infty$.
If $F$ contains the $m$-th roots of unity,
the generic division algebra can be used to show that
$\ell_F(m,n)$ is finite, by the Merkurjev--Suslin theorem.
In this case, we may assume $m$ is a power of a prime $p$ by \cite[Th.~2.2]{Tig84},
and $\ell_F(m,n)$ is known as the {\it symbol length}.

\begin{prob}\label{lengthbounds}
Suppose $p$ is a prime, $r\leq s$, and $\Brp[p^r](F)$ is generated by
cyclic algebras.  Compute $\ell_F(p^r,p^s)$.
\end{prob}

Basic background on this problem can be found in \cite{Tig84}.
Almost all of the known results for $\ell_F(m,n)$
are either for $p$-algebras, or fields $F$ containing the $m$-th roots of unity.
Of course, $\ell_F(p,p)=1$ for $p=2,3$ by the classical theory, for
any field $F$.  Albert's theorem shows that $\ell_F(2,4)=2$, and
$\ell_F(2,8)=4$ when $\chr F\neq 2$ by \cite[Th.~2.6]{Tig84}.
For proofs of these and other results that follow more or
less immediately from earlier work, see \cite{Tig84}.  
In 1937, Teichm\"uller proved that for $p$-algebras, $\ell_F(p^r,p^s)\leq
p^s!\,(p^s!-1)$ (\cite{Teich:zerf}).

Lower bounds can be obtained via results on indecomposable algebras,
which we treat in \S\ref{tensor.sec}.  In \cite[Th.~2.3]{Tig84},
Tignol proved that if $p$ is a prime and $F$ is a field containing the
$p^r$-th roots of unity, then $\ell_F(p^r,p^s)\geq s$, by showing that
the generic division algebra itself cannot be represented as a sum of
fewer than $s$ classes of cyclic algebras of degree $p^r$.  Tignol's
lower bounds were improved by Jacob in \cite{Jacob:indec} in the prime
period case, via the construction of indecomposable algebras of
period $p$ and index $p^n$ for all $n\geq 1$, except, of course, for
$p=2$ and $n=2$.  His bounds, valid for $F$ containing a primitive $p$-th root of unity, are
$\ell_F(p,p^s)\geq 2s-1$ for $p$ odd, and $\ell_F(2,2^s)\geq 2s-2$
(\cite[Remark 3.7]{Jacob:indec}).

For upper bounds, we have
then $\ell_F(p,p) \leq \frac{1}{2}(p-1)!$ for any odd prime $p$, provided $F$ contains
the $p$-th roots of unity, by \cite[Th.~7.2.43]{Row:RT2}.
Kahn established some upper bounds for $\ell_F(2,2^s)$ in
\cite[Th.~1]{BK00} for fields of characteristic not $2$, and conjectured the bound $\ell_F(2,2^s)\leq
2^{s-1}$, which holds in the known cases listed above.  Finally,
Becher and Hoffmann proved that if $F$ contains a $p$-th root of unity and satisfies the (admittedly
strong) hypothesis $[F^\times:F^{\times p}]=p^m$, then $\ell_F(p,p^s)=
m/2$ for $p$ odd or $p=2$ and $F$ non-real.  If $p=2$ and $F$ is real,
the result is $(m+1)/2$ (\cite{BH04}).

In a related result, Mammone and Merkurjev showed that if $A$ is a
$p$-algebra of period $p^r$ and degree $p^s$, and $A$ becomes cyclic
over a finite separable field extension, then $A$ can be represented
by at most $p^{s-r}$ classes of cyclic algebras
(\cite[Prop.~5]{MM91}).

In \cite{Salt:ram}, Saltman proved that if
$F$ is the function field of an $l$-adic curve, then the degree of any $F$-division
algebra divides the square of its period, and it follows by Albert's theorem that if
$l \neq 2$, then $\ell_F(2,4) = 2$.  If $l\neq p$ and $F$ contains the $p$-th roots of unity, then
Suresh proved that $\ell_F(p,p^2)=2$ for odd $p$ in \cite{Sur:symlen}.  
Brussel and Tengan removed the roots of unity requirement in \cite{BT10}, using a different
method.  The 
bounds in the $p=2$ case were crucial to the determination of the $u$-invariant of
function fields of $l$-adic curves by Parimala--Suresh \cite{PS:Qpt2},
see also \cite{PS:Qpt} and \cite{PSur:length}.

The first open case of Problem~\ref{lengthbounds} (with or without roots of unity) is:

\begin{prob}
Find an upper bound on $\ell_F(p,p^2)$ and $\ell_F(p^2,p^2)$ for $p \geq 3$.
\end{prob}

\section{Period-index problem} \label{PI.sec}

By the basic theory, the period of a central simple algebra divides its index (i.e., $\per(A) \mid \ind(A)$ for all $A$), and the two numbers have the same prime factors.  For a field $F$, define the \emph{Brauer dimension} $\PI(F)$ to be the smallest number $n$ such that $\ind(A)$ divides $\per(A)^n$ for every central simple $F$-algebra $A$; if no such $n$ exists, set $\PI(F) = \infty$.

\begin{eg}
If $\Br F = 0$, then trivially $\PI(F) = 0$.  For $F$ a local field or a global field $\PI(F) = 1$ by Albert--Brauer--Hasse--Noether.  For $F$ a field finitely generated and of transcendence degree 2 over an algebraically closed field, $\PI(F) = 1$ by \cite{dJ} and \cite[Th.~4.2.2.3]{Lieblich:PI}.
In case $\PI(F) = 1$, one says that ``$F$ has $\text{period} = \text{index}$''.
\end{eg}

One can focus this notion on a particular prime $p$.  Define $\PI_p(F)$ to be the smallest number $n$ such that $\ind(A)$ divides $\per(A)^n$ for every central simple $F$-algebra $A$ whose index is a power of $p$.  If no such $n$ exists, we put $\PI_p(F) = \infty$.

\begin{egs}
\begin{enumerate}
\item M.~Artin conjectured in \cite{Artin:BS} that $\PI(F) = 1$ for every $C_2$ field $F$.  He proved that $\PI_2(F) = \PI_3(F) = 1$ for such fields, but no more is known.

\item For $F$ finitely generated and of transcendence degree 1 over an $\ell$-adic field $\PI_p(F) = 2$ for every prime $p \ne \ell$ by \cite{Salt:ram}.  Is $\PI_\ell(F)$ finite?

\item If $F$ is a complete discretely valued field with residue field $k$ such that $\PI_p(k) \le d$ for all primes $p \ne \chr(k)$, then $\PI_p(F) \le d + 1$ for all $p \ne \chr(k)$ by \cite[Th.~5.5]{HHK:app}.
\end{enumerate}
\end{egs}

\begin{eg}\label{badprime}
If $F$ has characteristic $p$ and transcendence degree $r$ over a perfect field $k$, then $\PI_p(F) \le r$
by the argument in the discussion on page \pageref{PIp.trdeg} for $p$-algebras.
\end{eg}

\begin{prob} 
If $F$ is finitely generated over a field $F_0$ with $\PI(F_0)$ finite, is $\PI(F)$ necessarily finite? 

It is natural to start by considering only certain fields $F_0$.  For example, suppose that $F$ has prime characteristic $p$ and we take $F_0$ to be its prime field, so $\PI(F_0) = 0$.  If $F$ has transcendence degree 1 over $F_0$, then $F$ is global and $\PI(F) = 1$.  If $F$ has transcendence degree 2 over $F_0$, then $\PI(F) \le 3$ by \cite{Lieblich:PI}.  
\end{prob}

\begin{prob}
Define a notion of ``dimension'' for some class of fields $F$ such that 
\[
\PI(F) \le \dim F - 1 \quad \text{and} \quad \dim(F(t)) = \dim F + 1.
\]
One possibility would be to set a $C_i$ field to have dimension $i$.  The notion of cohomological dimension is obviously not the right one in view of Merkurjev's example of a field $F$ with cohomological dimension 2 and $\PI(F) = \infty$ from \cite{M:simple}.
\end{prob}

\section{Center of generic matrices} \label{center.sec}

Amitsur's universal division algebra has already appeared several times in these notes.  In this section, we discuss some interesting questions regarding its center.  

We begin with the definition.
Let $F$ be a field, let $V=\M_n(F)\oplus \M_n(F)$,
and let $F(V)$ be the field of rational functions on $V$.
For $k=1,2$, let $x_{ijk}$ be the coordinate function defined by the standard elementary
matrix $E_{ij}\oplus 0$ or $0\oplus E_{ij}$, depending on $k$, and set $X_k=(x_{ijk})\in \M_n(F(V))$.
We call the $F$-algebra $R(F,n)$ generated by $X_1$ and $X_2$ 
the {\it ring of (two) generic $n$-by-$n$ matrices over $F$}
(\cite[Ch.~14]{Salt:lect}).
It is a noncommutative domain
that can be specialized to give any central simple $L$-algebra of degree $n$, for every extension 
$L$ of $F$ \cite[14.1]{Salt:lect}.  The field of fractions of its center $C(F,n)\subset R(F,n)$ is 
denoted by $Z(F,n)$, and called the \emph{center of generic $n$-by-$n$ matrices over $F$}.
The (central) localization of $R(F,n)$ by the nonzero elements of $C(F,n)$ is denoted by $\UD(F,n)$,
and called the {\it generic division algebra of degree $n$ over $F$}.  
In the language of \cite[p.~11]{GMS}, the class of $\UD(F,n)$ in $\H^1(Z(F,n), \PGL_n)$ is a versal torsor.


There is another way to view the above construction, due to Procesi. 
Let $\PGL_n(F) = \GL_n(F)/F^\times$ be the projective linear group. 
This group has a representation on $V$
via the action
$A \cdot (B_1,B_2) = (AB_1A^{-1},AB_2A^{-1})$. 
It follows that $\PGL_n(F)$ acts on $F(V)$, 
and one can show the invariant field $F(V)^{\PGL_n(F)}$ is $Z(F,n)$. 
Furthermore,
$\PGL_n(F)$ acts naturally on $\M_n(F(V)) = \M_n(F) \otimes_F F(V)$ via the action on each tensor factor,
and one can show the invariant ring $\M_n(F(V))^{\PGL_n(F)}$ is $\UD(F,n)$
(see \cite[Thm 14.16]{Salt:lect} for proofs of both results).
This invariant field point of view puts the problems of this section in the bigger 
context of birational invariant fields of reductive groups and particularly the birational invariant 
fields of almost free representations of reductive groups.

One can begin with the above construction and modify it to make other generic constructions. 
For example, put $D := \UD(F,n)$ and $Z := Z(F,n)$ and assume $m$ divides $n$.  
Let $Z_m(F,n)$ be the generic splitting field of the division algebra equivalent to $D^{\otimes m}$ over $Z$, and set 
$\UD_m(F,n) =D \otimes_Z Z_m(F,n)$. 
Then $\UD_m(F,n)$ is a generic division algebra of degree $n$ and period $m$ 
with center $Z_m(F,n)$.  In the language of \cite{GMS}, the class of $\UD_m(F,n)$ in $H^1(Z_m(F,n), \GL_n/\mu_m)$ is a versal torsor.  Another example uses
Procesi's result that $Z(F,n) = F(M)^{S_n}$ where $S_n$ is the symmetric group and $M$ is a specific $S_n$ lattice
(see \cite[Thm 14.17]{Salt:lect}).
The group $S_n$ appears because it is the Galois group of the Galois closure of a ``generic'' 
maximal subfield of $\UD(F,n)$. 
It follows that if $H\subset S_n$ is a subgroup and we set $Z_H(F,n)=F(M)^H$, then $\UD_H(F,n)=D\otimes_Z Z_H(F,n)$ is a generic central simple algebra having $H$ as the Galois group of the Galois closure of a maximal subfield. In particular, if $H$ has order $n$ and acts transitively on $\{1,\ldots,n\}$ (we say $H$ is a transitive subgroup of $S_n$) we have formed the generic $H$-crossed product algebra and its center. 
One can combine these constructions and form generic crossed products 
of period $m$, but we will have nothing to say about these.

The general problem addressed here concerns the properties of the fields $Z(F,n)$ and their subsidiary fields $Z_m(F,n)$ and $Z_H(F,n)$. To state the questions, define $L/F$ to be \emph{rational} if $L$ is purely transcendental over $F$. Define $L/F$ to be \emph{stably rational} if there is a rational $L'/L$ such that $L'/F$ is rational. More generally, say that $L_1/F$ and $L_2/F$ are \emph{stably isomorphic} if there are rational $L_i'/L_i$ such that $L_1'\cong L_2'$ over $F$. Finally, define $L/F$ to be \emph{retract rational} if the following holds. There is a localized polynomial ring $R = F[x_1, \ldots, x_r](1/s)$ and an $F$ subalgebra $S \subset R$ with an $F$ algebra retraction $R \to S$ (meaning $S \to R \to S$ is the identity) such that $L$ is the field of fractions of $S$. It is pretty clear that rational implies stably rational implies retract rational, and in fact (much harder) these implications cannot be reversed.

To simplify the discussion note the result of Katsylo \cite{Katsylo} and Schofield \cite{Schofield} that if $n = ab$ for $a$ and $b$ relatively prime, then $Z(F,n)$ is stably isomorphic to the field compositum $Z(F,a)\,Z(F,b)$. Similar statements are possible for the $Z_m(F,n)$ and $Z_H(F,n)$. This often (but not always) allows reduction to the case where $n$ is a prime power.

\begin{prob} \emph{\cite[p.~240]{procesi:67}} \label{center.prob}
Is $Z(F,n)/F$ rational, stably rational, or retract rational? The same question for $Z_m(F,n)$ and $Z_H(F,n)$.
\end{prob}

This appears as Problem 8 in \cite{Salt:fd} (see also Saltman's Problems 9, 10, and 11), and is a major topic of the surveys \cite{Lebruyn:surv} and \cite{Formanek:surv}.
We note that $Z(F,n)/F$ is retract rational if and only if division algebras of degree $n$ have the so-called lifting  property, see \cite[p.~77]{Salt:lect}. Similar statements can be made for $Z_m(F,n)$ and $Z_H(F,n)$.

When $n$ is 2, 3, or 4 then $Z(F,n)$ is rational, as proved (respectively) by Sylvester, Procesi and Formanek.  When $n$ is 5 or 7 Bessenrodt and Lebruyn showed in \cite{BlB} that $Z(F,n)$ is stably rational, and a second, more elementary proof of this was given by Beneish \cite{Beneish}. 

There are a few results for the $Z_m(F,n)$. The field $Z_2(F,4)$ is stably rational by Saltman \cite{Salt:symp}, and Saltman--Tignol \cite{SaltTig} showed that $Z_2(F,8)$ is retract rational. Beneish \cite{Beneish:28} showed that $Z_2(F,8)$ is stably rational. 
Finally, we leave to the well-read reader to prove that when $H \subset S_n$ is cyclic and transitive and $F$ contains a primitive $n$-th root of unity, then $Z_H(F,n)$ is rational (and results \emph{are} available for general $F$, $n$ not of the form $8m$, and retract rationality). Thus the first interesting specific question along these lines is:
\begin{prob}
Determine if $Z_H(F,9)$ is rational, stably rational, or retract rational, where 
 $H = C_3 \times C_3 \subset S_9$ is transitive. 
 \end{prob}

\section{Essential dimension}\label{ed.sec}

\newcommand{\func}{\mathscr{F}}

Essential dimension counts the number of parameters needed to define
an algebraic structure.  This notion was introduced in the late 1990s
by Buhler--Reichstein \cite{buhler_reichstein:ed} and placed in a
general functorial context by Merkurjev
\cite{berhuy_favi:ed_functorial}.  Given a field $F$, a functor
$\func : \Fields_F \to \Sets$ from the category of field
extensions of $F$ (together with $F$-embeddings) to the category of
sets, a field extension $F \to K$, and an element $a \in
\func(K)$, a \emph{field of definition} of $a$ is a field
extension $F \to L$ and an $F$-embedding $L \to K$ such that $a$ is in
the image of the map $\func(L) \to \func(K)$.  The
\emph{essential dimension of $a$} (over $F$), denoted by $\ed_F(a)$,
is the infimum of the transcendence degrees $\trdeg_F(L)$ over all
fields of definition $L$ of $a$.  Finally, the \emph{essential
dimension of $\func$} is the number
$$
\ed_F(\func)=\sup\{\ed_F(a)\}
$$
where the supremum is taken over all field extensions
$F \to K$ and all elements $a \in \func(K)$.
We will suppress the dependence on the base field $F$ when
no confusion may arise.

The \emph{essential $p$-dimension} of $\func$, denoted by $\ed_{F,p}(\func)$ or simply $\ed_p(\func)$, is defined similarly.  
One replaces $\ed_F(a)$ with $\ed_{F,p}(a)$, which is the infimum of $\ed_F(a_{K'})$ as $K'$ varies over all finite embeddings $K \ra K'$ of prime-to-$p$ degree.  Obviously, $\ed_{F,p}(a) \le \ed_F(a)$ for all $a$, hence $\ed_{F,p}(\func) \le \ed_F(\func)$.

\begin{defn}
The \emph{essential dimension} $\ed(G)=\ed_F(G)$ of an algebraic
group $G$ over $F$ is the essential dimension of the
functor $\H^1(-,G)$.  Similarly, the \emph{essential $p$-dimension}
$\ed_p(G) = \ed_{F,p}(G)$ of $G$ is the essential
$p$-dimension of the functor $\H^1(-,G)$.
\end{defn}

The essential dimension of a algebraic group $G$ equals the essential
dimension of any
versal $G$-torsor, see \cite[\S6]{berhuy_favi:ed_functorial}.  For a
broad survey of essential dimension results, see Reichstein's
sectional address \cite{R:ICM} to the 2010 ICM in Hyderabad, India.

The functor $\H^1(-,\PGL_n)$ is identified via Galois descent with the
functor assigning to a field extension $F \to K$ the isomorphism
classes of central simple $K$-algebras of degree $n$.  
As we shall discuss below, computation of
the essential dimension of $\PGL_n$ is relevant to the theory of
central simple algebras, in the sense that knowing the ``number of
parameters'' needed to define central simple algebras of degree $n$
can lead to conclusions about their structural properties (e.g.\
concerning decomposability and crossed product structure).

\begin{prob}
\label{ed:mainprob}
Compute the essential dimension and the essential $p$-dimension of
$\PGL_{n}$.
In particular:
\begin{enumerate}
\item Compute $\ed_F(\PGL_4)$ when $\chr F=2$.
\item Compute $\ed_F(\PGL_5)$.
\end{enumerate}
\end{prob}

\Wik The generic division algebra $\UD(F,n)$ of degree $n$ corresponds
to a versal $\PGL_n$-torsor, see \S\ref{center.sec}.  It is immediate
from Procesi's description of $Z(F,n)$ by means of invariants that
$\ed(\PGL_n) \leq n^2+1$, see \cite[Th.\ 1.8]{procesi:67}.  With some
additional work, Procesi \cite[Th.\ 2.1]{procesi:67} proved that
$\ed(\PGL_n) \leq n^2$.  
The current best upper bounds are
$$
\ed(\PGL_n) \leq
\begin{cases}
\frac{1}{2}(n-1)(n-2) & \text{if $n \geq 5$ and $n$ is odd} \\
n^2 - 3n + 1 & \text{if $n \geq 4$, $F=F_{\sep}$, and $\mathrm{char}(F)=0$}
\end{cases}
$$
see \cite[Th.\ 1.1]{LRRS}, \cite[Prop.\ 1.6]{Le:ed}, and
\cite{FF:edt}.  

Tsen's theorem can be used to show that $\ed(\PGL_n) \geq 2$ for any
$n \geq 2$, see \cite[Lemma 9.4a]{R:notion}.  The current best lower
bounds are 
$$
\ed(\PGL_{n}) \geq
\ed_p(\PGL_{n}) \geq (r-1)p^{r}+1
$$
where $p^r$ is the highest power of $p$ dividing $n$ and we assume $\chr F \ne p$, see Merkurjev
\cite{M:pn} and the discussion below.  This improves on the
long-standing lower bound $\ed(\PGL_{p^r}) \geq \ed_p(\PGL_{p^r}) \geq
2r$ in \cite[Th.\ 16.1]{R:hermite}, \cite[Th.\
8.6]{reichstein_youssin}.  Finally, because of the decomposition of
central simple algebras into prime powers, $\ed(\PGL_{nm}) \leq
\ed(\PGL_n) + \ed(\PGL_m)$ if $(n,m)=1$.

Table \ref{table:ed} lists current bounds for $\ed(\PGL_n)$ for small
values of $n$, obtained by combining the bounds in this discussion
with those after the discussion of Problem \ref{prob:ed_eric}.  (A slightly different table on the same topic can be found at the end of Baek's thesis \cite{Baek:th}.)
Of course, the bounds may improve upon further specification of the
field.  The cases for $n=4$ in characteristic $2$ and for $n=5$ are
the smallest unknown cases, and each has important implications for
the crossed product problems of \S\ref{Q2} and \S\ref{Q1},
respectively (see discussion below).  The feeble bounds on the second line of the table---when the characteristic divides $n$---illustrate our lack of knowledge in this case.  One of the few positive results is that the upper bound $\ed(\PGL_4) \le 5$ from \cite[Cor.~3.10(a)]{LRRS} holds without hypotheses on the base field.

\setcounter{table}{\value{section}}
\addtocounter{table}{-1}
\begin{table}[h!]
\begin{center}
\begin{tabular}{|c||c|c|c|c|c|c|c|c|c|}  \hline
$n$          & 2 & 3 & 4 & 5     & 6     & 7      & 8      &
9&10\\  \hline\hline
\parbox{1.75cm}{$\ed(\PGL_n)$ \\ $\chr F \notdivides n$}& 2 & 2
& \parbox{.9cm}{\quad 5} & \parbox{.85cm}{2$^{\dagger}$ -- 6 \\ 3
-- 6} & \parbox{.3cm}{\hfill 2$^{\dagger}$ ~\hfill \\ 3} &
\parbox{1.1cm}{2$^{\dagger}$ -- 15\\3 -- 15} & 17 -- 64& 10 -- 28&
\parbox{.85cm}{2$^{\dagger}$ -- 8\\3 -- 8}\\[.7em] \hline
\parbox{1.75cm}{$\ed(\PGL_n)$\\ $\chr F \divides n$}& 2 & 2
& 2 -- 5 & 2 -- 6 & 2 -- 3 & 2 -- 15 & 2 -- 64& 2 -- 28&2 -- 8\\[.7em] \hline
\end{tabular}
\end{center}
\caption{Bounds for $\ed(\PGL_n)$, references found in the
text. {}$^{\dagger}$ means ``when $F$ contains $\zeta_{n/2}$ if $n$ is even or
$\zeta_n+\zeta_n^{-1}$ if $n$ is odd''}
\label{table:ed}
\end{table}

Much more is known about essential $p$-dimension.  By
Reichstein--Youssin \cite[Lemma 8.5.5]{reichstein_youssin},
$\ed_p(\PGL_n) = \ed_p(\PGL_{p^r})$ if $p^r$ is the highest power of
$p$ dividing $n$; in particular, $\ed_p(\PGL_n) = 0$ if $p$ does not
divide $n$.  Every central simple algebra of degree $p$ becomes cyclic
over a prime-to-$p$ extension, eventually leading to $\ed_p(\PGL_p) =
2$, see \cite[Lemma 8.5.7]{reichstein_youssin}. Using an extension of
Karpenko's incompressibility theorem to products of $p$-primary
Severi--Brauer varieties, Karpenko--Merkurjev
\cite{KM:edp} provide a formula for the
essential dimension of any finite $p$-group, considered over a field
containing the $p$-th roots of unity.  This general formula was
extended to twisted $p$-groups and algebraic tori in
\cite{lotscher_macdonald_meyer_reichstein:ed_tori}, and ultimately
used with great success by Merkurjev \cite{M:p2}, \cite{M:pn} to
establish the formula $\ed_p(\PGL_{p^2}) = p^2+1$ 
for any field $F$ with $\chr F \neq p$, 
and the current best lower bound
\begin{equation} \label{M.lb}
(r-1)p^r + 1 \leq \ed_p(\PGL_{p^r}) \leq p^{2r-2} + 1.
\end{equation}
The above upper bound is in a recent preprint by Ruozzi \cite{R:ed},
improving on \cite[Th.\ 1.1]{MR:ub}.  For
$p=2$ and $r=3$, note that the upper and lower bounds coincide,
yielding $\ed_2(\PGL_8) = 17$ when $\chr F\neq 2$.

\subsection*{Asymptotic bounds}
It might be illustrative to view these bounds on $\ed(\PGL_n)$ asymptotically, in terms of big-$O$ notation.  In that language, we have the naive upper bound that $\ed(\PGL_n)$ is $O(n^2)$ and the naive lower bound that it is $\Omega(1)$, because it is between $n^2 + 1$ and 2.  The furious profusion of bounds listed in this section are invisible in this context, except for Merkurjev's lower bound \eqref{M.lb}, which shows that $\ed(\PGL_n)$ is \emph{not} $O(n)$.  (This settled in the negative Bruno Kahn's ``barbecue problem'' posed in 1992 \cite[\S1]{MR:ub}.)  The gap between the upper and lower bounds for  $\PGL_n$ stands in interesting constrast with the situtation for $\Spin_n$: $\ed(\Spin_n)$ is both $O(\sqrt{2}^n)$ and $\Omega(\sqrt{2}^n)$, i.e., is asymptotically bounded both above and below by constants times $\sqrt{2}^n$, by \cite{BRV:spin}.

\subsection*{Essential dimension and crossed products}\label{edcp}

Let $G$ be a finite group of order $n \geq 2$.  Let $\CP_n$ (resp.\
$\Alg_G$) be the functor $\Fields_F \to \Sets$ assigning to $F \to K$
the set of isomorphism classes of crossed product (resp.\ $G$-crossed
product) $K$-algebras of degree $n$. Of course, $\ed(\CP_n)$ is the
maximum of $\ed(\Alg_G)$ over all groups $G$ of order $n$. These
functors are relevant to the crossed product problem discussed in \S\ref{Q2}.
For example, $\ed(\Alg_G) \leq \ed(\CP_n) \leq \ed(\PGL_n)$, with
equality if every central simple algebra of degree $n$ over every
field extension of $F$ is a $G$-crossed product. An inequality
$\ed(\CP_n) < \ed(\PGL_n)$ would imply the existence of a noncrossed
product algebra of degree $n$ over some field extension of $F$.

\begin{prob}
\label{prob:ed_eric}
Determine structural conclusions about central simple algebras 
from essential ($p$-)dimension bounds.
\begin{enumerate}
\item Calculate bounds for $\ed(\CP_n)$ and $\ed(\Alg_G)$.  
\item Compute the difference $\ed(\Alg_G)-\ed(G)$.
\item If $\ed(\CP_n)=\ed(\PGL_n)$, determine whether every algebra of
degree $n$ over every extension of $F$ is a crossed product.
\end{enumerate}
\end{prob}

\Wik By \cite[IX.6, Theorem 9]{Albert}, $\H^1(-,\PGL_n)$ is isomorphic
to the functor $\Alg_G$ for $n=2,3,6$ and $G=C_n$, and for $n=4$ and
$G=C_2\times C_2$.
A result communicated to us by Merkurjev states that
$\ed(\Alg_{C_n})=\ed(C_n)+1$ when $\chr F$ does not divide $n$ (see
Prop.~\ref{thm:merkcyclic}). 
Since $\ed(C_2)=\ed(C_3)=1$ (over any
field, see cf.\ \cite[\S1.1]{serre:topics}, \cite[\S2.1]{JLY:generic}),
and $\ed(C_6)$ equals $1$ or $2$ (depending on whether $F$ contains
the $6$-th roots of unity or not), we obtain
the values listed in Table \ref{table:ed} for $n=2,3,6$ and $\chr F$ not dividing $n$. 
When $n=6$ and $\chr F$ divides $n$, one can show that $\ed(C_6) = 2$
using \cite[Th.\ 1]{L:ed1}, and therefore $2\leq \ed(\PGL_6) \leq 3$.
Note that Prop.~\ref{thm:merkcyclic} resolves
Problem~\ref{prob:ed_eric}(2) for $G=C_n$, when $\chr F$ does not divide $n$.

In \cite[Th.\ 7.1]{M:pn}, Merkurjev proved that $\ed(\Alg_{C_2\times
C_2})=5$ when $\chr F \neq 2$, and thus $\ed(\PGL_4)=5$ (see also Rost
\cite{rost:edPGL_4}).
Note that since $\ed(\Alg_{C_4}) = \ed(C_4)+1 \leq 3$ when $\chr F
\neq 2$, this implies the existence of noncyclic algebras of degree 4
(and period 4), as exhibited in \cite{Albert:deg4eg}.

One can bound $\ed(\Alg_G)$ by bounding the number of generators of
$G$.  If a finite group $G$ of order $n$ can be generated be $r \geq
2$ elements, then 
$\ed(\Alg_G) \leq (r-1)n + 1$, see \cite[Cor.\ 3.10a]{LRRS}.  This shows $\ed(\PGL_4)\leq 5$ when $\chr F=2$.  The
bound is sharp for $\chr F\neq p$: if $G = C_p^r$ for $r\geq 2$ and $\chr F\neq p$,
then $\ed(\Alg_G) = \ed_p(\Alg_G) = (r-1)p^r + 1$ by \cite[Th.\
7.1]{M:pn}.  Note that this resolves Problem \ref{prob:ed_eric}(1) for
elementary abelian $G$.
One can bound the number of generators by $r \leq \log_2(n)$ for $n
\geq 4$, see \cite[Cor.\ 3.10b]{LRRS}, and this bound is realized on
elementary abelian 2-groups.  Thus
$\ed(\CP_n) \leq (\log_2(n)-1)n+1$ for $n\geq 4$.  Now we have:
\[
\ed(\CP_8) \le 17 = \ed_2(\PGL_8) \le \ed(\PGL_8).
\]
As there exist non-crossed products of degree 8, it is reasonable to guess that at least one of the two inequalities is strict.


\newcommand{\Amn}[2]{\GL_{#1}/\mu_{#2}}
\subsection*{Algebras with small exponent}
The functor $\H^1(-, \GL_n/\mu_m)$ assigns to a field $K$ the isomorphism classes of central simple $K$-algebras of degree
$n$ and period dividing $m$.
Of course, $\H^1(-, \GL_n/\mu_n) = \H^1(-,\PGL_n)$.  By
Albert's theorem, every algebra of degree 4 and period 2 is
biquaternion, ultimately yielding $\ed(\Amn{4}{2}) =
\ed_2(\Amn{4}{2})=4$ when $\chr F\neq 2$, see \cite[Rem.\
8.2]{BM:algnm}.   
When $\chr F=2$, all algebras of degree 4 and period 2 are cyclic by
\cite{A:deg4char2}, hence $\ed(\Amn{4}{2})=\ed(\Alg_{C_4}) \leq
\ed(C_4) + 1 = 3$ (one can bound $\ed(C_4)$ using Witt vectors of
length 2, see \cite[Th.\ 8.4.1]{JLY:generic}).
Similarly, for $\Amn{8}{2}$, Rowen \cite{Rowen:CSA} produced
triquadratic splitting fields, ultimately yielding $\ed(\Amn{8}{2}) =
\ed_2(\Amn{8}{2}) = 8$ when $\chr F \neq 2$, see \cite[Cor.\
8.3]{BM:algnm}.  In particular, this implies the existence of algebras
of degree 8 and period 2 (when $\chr F\neq 2$) that are not
decomposable as a product of three quaternion algebras; 
such examples were exhibited in \cite{ART}.  

When $\chr F \neq p$, the
current best bounds for $\ed_p(\Amn{p^r}{p^s})$ are
$$
\left.\begin{array}{l}
(r-1)2^{r-1} \\
(r-1)p^r + p^{r-s} 
\end{array}\right\}
\leq \ed_{p}(\Amn{p^r}{p^s}) \leq
\begin{cases}
2^{2r-2} & \text{for $p=2$ and $s=1$} \\
p^{2r-2} + p^{r-s} & \text{otherwise}
\end{cases}
$$
for $r \geq 2$ and $1 \leq s \leq r$, see \cite[Th.\ 6.1, 7.2]{BM:algnm}, \cite[Th.\ 1.1]{R:ed}, and
\cite[Th.\ 4.1.30]{Baek:th}.   For $p$ odd and $r = 2$, the upper and lower bounds coincide, yielding
\[
\ed_p(\Amn{p^2}{p^2}) = p^2+1 \quad \text{and} \quad \ed_p(\Amn{p^2}{p}) = p^2 + p.
\]
This implies the existence of indecomposable algebras of degree $p^2$ and
period $p$, as exhibited previously in \cite{Tig:cor}, and
is an example of an essential dimension result having implications
on the existence of algebras with certain structural properties.

The smallest open cases seem to be:

\begin{prob}
Compute $\ed_F(\Amn{8}{2})$ when $\chr F = 2$; $\ed_F(\Amn{16}{2})$ when $\chr F \ne 2$; and $\ed_F(\Amn{p^2}{p})$ for odd primes $p$ and all $F$.
\end{prob}

The last part overlaps with Problem \ref{Q2.prob2}.

We prove the essential dimension result stated above:

\begin{prop} \label{thm:merkcyclic}
If $\chr F$ does not divide $n>1$, then
$\ed_F(\Alg_{C_n})=\ed_F(C_n)+1$.
\end{prop}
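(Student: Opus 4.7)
The plan is to prove the two inequalities separately.

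For the upper bound $\ed_F(\Alg_{C_n}) \le \ed_F(C_n) + 1$: given $A = (L/K, \sigma, a) \in \Alg_{C_n}(K)$, by definition of $\ed_F(C_n)$ the $C_n$-Galois extension $L/K$ descends to some $L_0/K_0$ with $K_0 \subseteq K$ an $F$-subfield satisfying $\trdeg_F K_0 \le \ed_F(C_n)$. Setting $K_1 := K_0(a) \subseteq K$, the algebra $(L_0 \otimes_{K_0} K_1 / K_1, \sigma, a) \in \Alg_{C_n}(K_1)$ pulls back to $A$, and $\trdeg_F K_1 \le \ed_F(C_n) + 1$.

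For the lower bound, set $d := \ed_F(C_n)$ and fix a $C_n$-Galois extension $L^*/K^*$ realizing $d$, i.e., with $\ed_F(L^*/K^*) = d$ (for example, a versal $C_n$-torsor). Adjoin a transcendental $t$ and form the cyclic algebra
$$ A^* := (L^*(t)/K^*(t), \sigma, t) \in \Alg_{C_n}(K^*(t)). $$
I claim $\ed_F(A^*) \ge d + 1$. Suppose $A^* \cong A_0 \otimes_{K_0} K^*(t)$ for some $A_0 \in \Alg_{C_n}(K_0)$ with $K_0 \subseteq K^*(t)$ an $F$-subfield of transcendence degree $e$. Consider the $t$-adic valuation $v_0$ on $K^*(t)$, whose residue field is $K^*$; since $\chr F \nmid n$, the residue map $\partial_{v_0} \colon \Br(K^*(t))[n] \to \H^1(K^*, \Z/n)$ is defined, and the standard cyclic-algebra residue formula gives $\partial_{v_0}[A^*] = [L^*/K^*]$, which is nontrivial. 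If $w := v_0|_{K_0}$ were trivial, then $K_0 \subseteq \OO_{v_0}$ and the class $[A^*]$, being base-changed from $[A_0]$, would be unramified at $v_0$, a contradiction. Hence $w$ is a nontrivial valuation on $K_0$; its residue field $\tilde K_0$ embeds $F$-linearly into $K^*$, with $\trdeg_F \tilde K_0 \le e - 1$ by Abhyankar's inequality.

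To conclude, I descend $L^*/K^*$ to $\tilde K_0$. Pick a $C_n$-Galois maximal subfield $M/K_0$ of $A_0$, write $A_0 = (M/K_0, \tau, b)$, and apply the residue formula to the presentation $A^* = (MK^*(t)/K^*(t), \tau, b)$ to obtain $[L^*/K^*] = v_0(b) \cdot [\overline M/K^*]$, where $\overline M$ is the residue of $M/K_0$ at $w$ base-changed along $\tilde K_0 \hookrightarrow K^*$. Since $[L^*/K^*]$ has order exactly $n$, necessarily $v_0(b)$ is coprime to $n$ and $[\overline M/K^*]$ also has order $n$, so $[L^*/K^*]$ is—up to an automorphism of $C_n$, which preserves essential dimension—base-changed from a $C_n$-torsor defined over $\tilde K_0$. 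Thus $d = \ed_F(L^*/K^*) \le \trdeg_F \tilde K_0 \le e - 1$, i.e., $e \ge d + 1$.

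The main obstacle is this last residue computation: it presupposes that $MK^*(t)/K^*(t)$ is $v_0$-unramified, which is not automatic because $A^*$ may carry many distinct $C_n$-Galois maximal subfields over $K^*(t)$. Overcoming this will likely involve choosing $M$ carefully (exploiting that $L^*(t)/K^*(t)$ is already an unramified maximal subfield of $A^*$), or a finer local ramification analysis to extract the descent of the $C_n$-torsor even when $M$ ramifies at $w$.
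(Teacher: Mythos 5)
Your strategy coincides with the paper's: form $A^* = (L^*(t)/K^*(t),\sigma,t)$ from a versal $C_n$-torsor $L^*/K^*$ and extract information from the $t$-adic residue. Your upper-bound argument is correct, and the lower bound is set up correctly through the point where you show $w := v_0|_{K_0}$ is nontrivial and $\trdeg_F\tilde K_0 \le \trdeg_F K_0 - 1$. The obstacle you flag at the end, however, is self-inflicted and disappears once you stop insisting on a cyclic presentation of $A_0$.

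There is no need to pick a $C_n$-Galois maximal subfield $M \subset A_0$ and compute the residue of $A^*$ through that presentation. Since $\chr\tilde K_0$ does not divide $n$, the discretely valued field $(K_0,w)$ carries a \emph{canonical} ramification map $\partial_w \colon \Br(K_0)[n] \to \H^1(\tilde K_0,\Z/n)$, defined independently of any presentation of the Brauer class (the Auslander--Brumer residue; cf.\ \cite[\S 7]{GMS}). Put $\chi_0 := \partial_w[A_0]$. Residues are compatible with extensions of discretely valued fields up to the ramification index, so applying this to $(K_0,w)\subset (K^*(t),v_0)$ gives
\[
[L^*/K^*] \;=\; \partial_{v_0}[A^*] \;=\; \partial_{v_0}\bigl[A_0\otimes_{K_0}K^*(t)\bigr] \;=\; e_{v_0/w}\cdot\res_{K^*/\tilde K_0}(\chi_0),
\]
where $e_{v_0/w}$ is the ramification index. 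Because the left side has order exactly $n$, this forces $\gcd(e_{v_0/w},n)=1$ and $\chi_0$ of order $n$; multiplying by $e_{v_0/w}^{-1}\bmod n$ (an automorphism of $C_n$, as you note) exhibits the $C_n$-torsor $L^*/K^*$ as base-changed from a $C_n$-torsor over $\tilde K_0$. This completes your last step and is exactly what the paper does---its phrase ``the residue map commutes with scalar extension, scaled by the ramification index if necessary, so $\chi_0$ has order at least $n$'' is this computation. The alternative you suggest of ``choosing $M$ carefully'' is both unnecessary and genuinely harder, since $A_0$ need not admit a maximal subfield unramified at $w$.
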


\begin{proof} 
Since clearly $\ed_F(\Alg_{C_n})\leq\ed_F(C_n)+1$, it
suffices to produce an algebra with essential dimension at least
$\ed_F(C_n)+1$. Suppose $K/F$ is a field extension, $L/K$ is a cyclic
field extension of degree $n$, and $A=(L/K,\sigma,t)$ is a cyclic
crossed product over the rational function field $K(t)$. The parameter
$t$ defines a discrete valuation $v$ on $K$, and we have a residue
$\partial_v(A)=\chi$, for an element $\chi$ of order $n$ in the
character group $X(K)$. If $A$ is defined as $A_0$ over an extension
$E/F$, then $t$ defines a discrete valuation $v_0$ on $E$, and
$\partial_{v_0}(A_0)=\chi_0\in X(\kappa(v_0))$. Since the residue map
commutes with scalar extension---scaled by the ramification index if necessary---$\chi_0$ has order at least $n$, hence
$\ed_F(C_n)\leq\trdeg_F(\kappa(v_0))$. Since
$\trdeg_F(\kappa(v_0))<\trdeg_F(E)$,
we conclude $\ed_F(C_n)<\ed_F(A)$. 
\end{proof}


\section{Springer problem for involutions}

Involutions on central simple algebras are ring-antiautomorphisms of
period~$2$. They come in different types, depending on their action on
the center and on the type of bilinear forms they are adjoint to after
scalar extension to an algebraic closure of the center: an involution
is of \emph{orthogonal} (resp.\ \emph{symplectic}) \emph{type} if it
is adjoint to a symmetric, nonalternating bilinear form (resp.\ to an
alternating bilinear form) over an algebraic closure; it is of
\emph{unitary type} if its restriction to the center is not the
identity. The correspondence between central simple algebras with
involution and linear algebraic groups of classical type was first
pointed out by Andr\'e Weil \cite{Weil}; it is a deep source of
inspiration for the development of the theory of central simple
algebras with involution, see \cite{KMRT}. For semisimple linear
algebraic groups, Tits defined in \cite{Ti:Cl} a notion of index, which
generalizes the Schur index of central simple algebras and
the Witt index of quadratic forms. Anisotropic general linear groups
arise from division algebras, and anisotropic quadratic forms yield
anisotropic orthogonal groups. More generally, every adjoint group of
type~$^1\!D_n$ or $^2\!D_n$ over a field of characteristic different
from~$2$ can be
represented as the group of automorphisms of a central simple algebra
$A$ of degree~$2n$ with orthogonal involution $\sigma$; the group is
anisotropic if and only if $\sigma$ is anisotropic in the following
sense: for $x\in A$, the equation $\sigma(x)x=0$ implies $x=0$. The
behavior of the index under scalar extension is a major subject of
study, to which the following problem, first formulated in
\cite[p.~475]{BST}, pertains:

\begin{prob}\label{Inv1}
Suppose $\sigma$ is an anisotropic orthogonal or symplectic
involution on a central simple algebra $A$ over a field $F$. Does
$\sigma$ remain anisotropic after scalar extension to any odd-degree
field extension of $F$?
\end{prob}

(If $\charac F=2$, the question makes sense, but to preserve the relation with linear algebraic groups one should replace the orthogonal involution $\sigma$ with a quadratic pair \cite[\S5B]{KMRT}.)

Representing $A$ as $\operatorname{End}_DV$ for some vector space $V$
over a division algebra $D$, we may rephrase the problem in terms of
hermitian forms: if a hermitian or skew-hermitian form over a division
algebra with orthogonal or symplectic involution is anisotropic, does
it remain anisotropic after any odd-degree extension? When the central
simple algebra is split (which is always the case if its degree $\deg
A$ is odd), orthogonal involutions are adjoint to symmetric bilinear
forms, and the problem has an affirmative solution by a well-known
theorem of Springer \cite[Cor.~18.5]{EKM} (actually first proven---but
not published---by E.~Artin in 1937, see \cite[Rem.~1.5.3]{Kahn:book}). On
the other hand, every
symplectic involution on a split algebra is hyperbolic, hence
isotropic, so the problem does not arise for symplectic involutions on
split algebras. At the other extreme, if $A$ is a division algebra,
then the solution is obviously affirmative, since $A$ remains a
division algebra after any odd-degree extension and every involution
on a division algebra is anisotropic.

Variants of Problem~\ref{Inv1} take into account the ``size'' of the isotropy:
call a right ideal $I\subset A$ \emph{isotropic} for an involution
$\sigma$ if $\sigma(I)\cdot I=\{0\}$, and define its \emph{reduced
dimension} by $\rdim I=\frac{\dim I}{\deg A}$. The reduced dimension
of a right ideal can be any multiple of the Schur index $\ind A$
between $\ind A$ and $\deg A$, but for isotropic ideals we have $\rdim
I\leq\frac12\deg A$, see \cite[\S6A]{KMRT}. The involution $\sigma$ is
called \emph{metabolic} if $A$ contains an isotropic ideal of reduced
dimension $\frac12\deg A$. A weak version of Springer's theorem holds
for arbitrary involutions: Bayer--Lenstra proved in
\cite[Prop.~1.2]{BayerLenstra} that an
involution that is not metabolic cannot become metabolic over an
odd-degree field extension. As a result, Problem~\ref{Inv1} also has an
affirmative solution when $\ind A=\frac12\deg A$, for in this case
isotropy implies metabolicity. (See \cite[Th.~1.14]{BFT} for the
analogue for quadratic pairs.)

If $\charac F\neq2$ and $\ind A=2$, Problem~\ref{Inv1} was solved in
the affirmative by
Parimala--Sridharan--Suresh \cite{PSS:herm}. The symplectic case is easily
reduced to the case of quadratic forms by an observation of Jacobson
relating hermitian forms over quaternion algebras to quadratic
forms. The orthogonal case also is reduced to the case of quadratic
forms, using scalar extension to the function field of the conic that
splits $A$. That approach relates Problem~\ref{Inv1} to another important
question, to which Parimala--Sridharan--Suresh gave a positive
solution when $\ind A=2$ and $\charac F\neq2$:

\begin{prob} \label{Inv2}
Suppose $\sigma$ is an anisotropic orthogonal involution on a
central simple algebra $A$ over a field $F$. Does $\sigma$ remain
anisotropic after scalar extension to the function field $F_A$ of the
Severi--Brauer variety of $A$?
\end{prob}

(This is a special case of a general problem concerning semisimple algebraic groups, namely whether a projective homogeneous variety under one group has a rational point over the function field of a projective homogeneous variety under another group.  From this perspective, Problem \ref{Inv2} is asking for an analogue of the index reduction results from \cite{SvdB}, \cite{MPW1}, \cite{MPW2}, etc.)

By Springer's theorem, an affirmative answer to Problem \ref{Inv2} readily
implies that Problem~\ref{Inv1} has an affirmative solution for orthogonal
involutions. Surprisingly, Karpenko \cite{Karp:odd} recently proved that the
converse also holds when $\charac F\neq2$: an
orthogonal involution cannot become isotropic over $F_A$ unless it
also becomes isotropic over some odd-degree extension of $F$.

Problem~\ref{Inv2} is addressed in the following papers
(besides~\cite{PSS:herm} and
\cite{Karp:odd}): \cite{Karp:herm}, \cite{Karp:iso}, \cite{G:16},
\cite{Karp:hyp}. In
\cite{Karp:herm} and \cite[2.5]{Karp:ICM}, Karpenko gives an
affirmative solution when $A$ is a
division algebra. This result is superseded in
\cite{Karp:iso}, where he proves that for any quadratic pair on a
central simple algebra $A$ of arbitrary characteristic, the Witt index
of the quadratic form to which the quadratic pair is adjoint over
$F_A$ is a multiple of $\ind A$. In particular, it follows that if the
quadratic pair becomes hyperbolic over $F_A$, then $\ind A$ divides
$\frac12\deg A$. An alternative proof of the latter result is given by
Zainoulline in \cite[Appendix~A]{G:16}. A much stronger statement was
soon proved by Karpenko \cite{Karp:hyp}: if $\charac F\neq2$, an
orthogonal involution that is not hyperbolic cannot become hyperbolic
over $F_A$. (The special case where $\deg A=8$ and $\ind A=4$ was
obtained earlier by Sivatski in~\cite[Prop.~3]{Sivatski:app}, using Laghribi's
work on $8$-dimensional quadratic forms \cite[Th\'eor\`eme~4]{Lag:Duke}.)

Note that the orthogonal and symplectic cases of Problem~\ref{Inv1} are related
by the following observation: tensoring a given central simple
$F$-algebra with involution $(A,\sigma)$ with the ``generic''
quaternion algebra $(x,y)_{F(x,y)}$ with its conjugation involution
yields a central simple $F(x,y)$-algebra with involution
$(A',\sigma')$, which is anisotropic if and only if $\sigma$ is
anisotropic. If $\charac F\neq2$, the involution $\sigma'$ is
orthogonal (resp.~symplectic) if and only if $\sigma$ is symplectic
(resp.~orthogonal). Therefore, a negative solution to Problem~\ref{Inv1} for a
given degree~$d$ and index~$i$ in the orthogonal (resp., symplectic)
case readily yields a negative solution in the symplectic (resp.~orthogonal) case in degree~$2d$ and index~$2i$. The same idea can be
used to obtain symplectic versions of the results on Problem~\ref{Inv2}, where
$F_A$ is replaced by a field over which the index of $A$ is
generically reduced to~$2$, see \cite[App.~A]{Karp:hyp}.  

If $\charac F\neq2$, the smallest index for which Problem~\ref{Inv1} is open
is~$4$, and the smallest degree is~$12$. Since
Parimala--Sridharan--Suresh do not address the characteristic~$2$ case
in \cite{PSS:herm}, Problem~\ref{Inv1}---restated for quadratic pairs---seems to be open in this case already for
index~$2$ and degree~$8$.  (The answer to Problem \ref{Inv1} is ``yes'' in the degree 6 case.  This can be seen via the exceptional isomorphism $D_3 = A_3$.)

\subsection*{Unitary involutions}
The analogue of Problem~\ref{Inv1} for unitary involutions was solved in the
negative in \cite{PSS:herm}: for any odd prime~$p$, there is a central
simple algebra $A$ of degree~$2p$ and index~$p$ with anisotropic
unitary involution over a field $F=\ell((t))$, where $\ell$ is a
ramified quadratic extension of a $p$-adic field $k$, which becomes
isotropic over any odd-degree extension of $F$ that splits $A$. If
$p\geq5$, there are unitary involutions on division algebras of
degree~$p$ that become isotropic after scalar extension to any maximal
subfield. As Parimala--Sridharan--Suresh suggest at the end of their
paper, the correct analogue of Problem~\ref{Inv1} for unitary involutions
should probably ask: Does the involution remain anisotropic after
scalar extension to a field extension of degree coprime to $2\ind A$?

Motivated by this observation, we can generalize Problem \ref{Inv1} by asking:
\begin{prob} \label{Inv3}
Let $G$ be an absolutely almost simple linear algebraic group that is
anisotropic over $F$.  Does $G$ remain anisotropic over every finite
extension $K/F$ of dimension not divisible by any prime in $S(G)$?
\end{prob}
Here $S(G)$ denotes the set of `homological torsion primes' from
\cite[2.2.3]{SeCGp}, exhibited in Table \ref{HTP}.
Roughly speaking, the results on Problem \ref{Inv1} for orthogonal and
symplectic involutions concern the types $B$, $C$, and $D$ cases of
Problem \ref{Inv3} and the results for unitary involutions concern the
type $A$ case.  The answer to Problem \ref{Inv3} is ``yes'' for $G$ of
type $G_2$ or $F_4$; this can be seen by inspecting the cohomological
invariants of these groups described in \cite{KMRT}.
\setcounter{table}{\value{section}}
\addtocounter{table}{-1}
\begin{table}[tbh]
\begin{tabular}{c|c}
type of $G$&elements of $S(G)$\\ \hline
$A_n$&2 and prime divisors of $n+1$\\
$B_n$, $C_n$, $D_n$ ($n > 4$), $G_2$&2\\
$D_4$, $F_4$, $E_6$, $E_7$&2 and 3\\
$E_8$&2, 3, and 5
\end{tabular}
\caption{The set $S(G)$ of homological torsion primes for an absolutely almost simple algebraic group $G$} \label{HTP}
\end{table}


\section{Artin--Tate Conjecture}

Motivated by an analogue of the Birch and Swinnerton-Dyer conjecture
for abelian varieties over global fields of characteristic $p$, Tate
(reporting on joint work with Artin) \cite{Ta66} introduces a host of
conjectures for surfaces over finite fields. As most of the progress
on these conjectures has been made from their connection with the Tate
conjecture on algebraic cycles and the Birch and Swinnerton-Dyer
conjecture, a natural challenge arises: \emph{Can progress be made on
the conjectures in this section using ``central simple algebra
techniques''?}

Let $k=\F_q$ be a finite field of characteristic $p$ and $X$ a
geometrically connected, smooth, and projective $k$-surface.  Let
$\Br(X) = H_{\text{\'et}}^2(X,\Gm)$, which equals the Azumaya Brauer group since $X$ is regular over a field.

\begin{conj*}[Artin--Tate Conjecture A]
The Brauer group of $X$ is finite.
\end{conj*}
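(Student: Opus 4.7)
The plan is to establish the well-known equivalence between finiteness of $\Br(X)$ and the Tate conjecture for divisors on $X$, then to attack the Tate side using both $\ell$-adic and flat cohomology. The central tool is the Kummer sequence in étale cohomology combined with the Weil conjectures.

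First, for each prime $\ell \ne p$, I would unwind the Kummer sequence $0 \to \mu_{\ell^n} \to \Gm \to \Gm \to 0$ on $X_{\text{\'et}}$ to produce short exact sequences
\[
0 \to \Pic(X)/\ell^n \to \H^2(X,\mu_{\ell^n}) \to \Br(X)[\ell^n] \to 0,
\]
and then pass to the inverse limit to obtain
\[
0 \to \NS(X) \ot \Z_\ell \to \H^2(X_{\ol k},\Z_\ell(1))^{\Gal(\ol k/k)} \to T_\ell \Br(X) \to 0.
\]
Since $\Br(X)\{\ell\}$ is of cofinite type (being a quotient of a group of cofinite type), its finiteness is equivalent to the vanishing of the Tate module $T_\ell \Br(X)$, which in turn is equivalent to the surjectivity of the $\ell$-adic cycle class map $\NS(X)\ot\Q_\ell \to \H^2(X_{\ol k},\Q_\ell(1))^{\Gal(\ol k/k)}$---the Tate conjecture for divisors on $X$.

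Second, I would use Deligne's proof of the Weil conjectures to identify $\dim_{\Q_\ell}\H^2(X_{\ol k},\Q_\ell(1))^{\Gal(\ol k/k)}$ with the multiplicity of $q$ as an eigenvalue of geometric Frobenius on $\H^2(X_{\ol k},\Q_\ell)$; this quantity is independent of $\ell$ and, by a counting argument with the zeta function, equals the order of pole of $\zeta(X,s)$ at $s = 1$. The rank of $\NS(X)$ always bounds this below, so the content is the reverse inequality. For the $p$-primary part, a parallel analysis is required using flat cohomology with $\mu_{p^n}$-coefficients (Milne) or, equivalently, the slope-$[0,1)$ part of crystalline cohomology (Illusie); finiteness of $\Br(X)\{p\}$ is similarly equivalent to the $p$-adic cycle class map being surjective. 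Finally, to conclude that $\Br(X)$ itself is finite (not merely each $\Br(X)\{\ell\}$), one needs to show that $\Br(X)\{\ell\} = 0$ for all but finitely many $\ell$, which follows once the cokernel of the cycle class map is controlled uniformly and $\NS(X)$ is known to be finitely generated.

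The main obstacle is the Tate conjecture itself, which is open in full generality; the plan above therefore reduces, not solves, the problem. A genuinely new approach via central simple algebras would begin from the generic point: letting $F = k(X)$, the group $\Br(X)$ embeds into $\Br(F)$ as the subgroup unramified along every codimension-one point of $X$. Since $F$ is the function field of a surface over a finite field, one could attempt to exploit structural results on $\Br(F)$ in the spirit of Saltman's work on $\ell$-adic curves (\S\ref{Q1}, \S\ref{PI.sec}), together with patching techniques, to bound the unramified Brauer subgroup directly. The hard part---and the reason this has not yet succeeded---is that controlling unramified classes in $\Br(F)$ simultaneously at all residues seems to require exactly the numerical input (pole order of $\zeta(X,s)$ at $s=1$) that the Tate conjecture provides, so a truly algebra-theoretic bypass of the Tate conjecture has yet to be found.
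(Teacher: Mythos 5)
This statement is an \emph{open conjecture}, not a theorem; the paper offers no proof of it, but instead records (as Theorem~\ref{thm:milne}, following Tate \cite{Ta66} and Milne \cite{Mi75}) that Conjecture~A is equivalent to Conjecture~T (the Tate conjecture for divisors on $X$), to Conjecture~C (the conjectural special value of $P_2(X,t)$ at $t=q^{-1}$), and even to finiteness of the $\ell$-primary torsion $\Brp[\ell](X)$ for a single prime $\ell$. You recognized this correctly: your proposal is not a claimed proof but an outline of the known reduction, and you explicitly flag the Tate conjecture as the unresolved obstruction.

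Your outline matches the standard material that the paper cites. The passage from the Kummer sequence to
\[
0 \ra \NS(X)\ot\Z_\ell \ra \H^2(X_{\ol k},\Z_\ell(1))^{\Gal} \ra T_\ell\Br(X) \ra 0
\]
and the observation that finiteness of the cofinite-type group $\Br(X)\{\ell\}$ is equivalent to vanishing of its Tate module (hence to surjectivity of the $\ell$-adic cycle class map) is exactly the content behind the implication $\text{(3)}\Rightarrow\text{(1)}$ in Theorem~\ref{thm:milne}. Your treatment of the $p$-primary part via flat cohomology and crystalline slopes is precisely Milne's contribution in \cite{Mi75}, later cleaned up via Illusie \cite{IlCot}. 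Two small refinements worth noting against the paper's statement: (i) Theorem~\ref{thm:milne}(2) is stronger than you suggest---finiteness of $\Brp[\ell](X)$ for a \emph{single} prime $\ell$ (even $\ell=p$) already implies Conjecture~A, because it forces Conjecture~T, which is $\ell$-independent, and then one runs the equivalences backwards for all primes simultaneously; so your worry about controlling all $\ell$ uniformly is automatically resolved once one $\ell$ is handled. (ii) Your final paragraph correctly identifies the paper's motivating challenge: whether the identification $\Br(X)=\Br_{ur}(k(X))$ and ``central simple algebra techniques'' (Saltman-style residue analysis, patching) could give a direct bound on the unramified Brauer group without passing through the Tate conjecture. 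As you say, no such route is currently known, and the paper poses exactly this as the open challenge of the section.
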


Since $\Br(X) = \Br_{ur}(k(X))$, where $\Br_{ur}(k(X))$ denotes the
unramified (at all codimensional 1 points) Brauer group of the
function field $k(X)$ of $X$, Conjecture A is equivalent to:

\begin{conj*}[Artin--Tate Conjecture A]
If $k(X)$ be a function field of transcendence degree 2 over a finite
field, then there are finitely many unramified classes in $\Br(k(X))$.
\end{conj*}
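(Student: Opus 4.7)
The plan is to split the task into a routine part---verifying the equivalence between the two formulations of Conjecture A asserted in the text---and the genuinely hard open part, which is the finiteness itself. For the first I would invoke cohomological purity for Brauer groups; for the second I would lay out the $\ell$-adic reduction that ties the conjecture to the Tate conjecture for divisors on $X$, which is where the real difficulty lives.

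For the equivalence: since $X$ is smooth of dimension $2$ over $k$, purity for the Brauer group of a regular scheme (Gabber, or Auslander--Goldman in low dimension) yields the exact Faddeev-type sequence
$$ 0 \to \Br(X) \to \Br(k(X)) \xrightarrow{\oplus\partial_x} \bigoplus_{x\in X^{(1)}}\H^1(\kappa(x),\Q/\Z), $$
with $x$ running over codimension-$1$ points of $X$ and $\partial_x$ the Witt-type residue attached to the DVR $\OO_{X,x}$. Injectivity on the left realises $\Br(X)$ as a subgroup of $\Br(k(X))$, and exactness in the middle identifies that subgroup with $\bigcap_x \Br(\OO_{X,x})$, which is by definition $\Br_{\mathrm{ur}}(k(X))$. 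Thus $\Br(X) = \Br_{\mathrm{ur}}(k(X))$, and the two versions of Conjecture A are literally the same statement.

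For finiteness itself, the classical $\ell$-adic route (due to Tate) goes as follows. For any prime $\ell\neq p$, the Kummer sequence on the \'etale site of $X$ produces
$$ 0 \to \NS(X)\otimes\Z_\ell \to \H^2_{\text{\'et}}(X,\Z_\ell(1)) \to T_\ell\Br(X) \to 0, $$
so finiteness of $\Br(X)[\ell^\infty]$ is equivalent to surjectivity of the cycle class map $\NS(X)\otimes\Q_\ell \to \H^2_{\text{\'et}}(X,\Q_\ell(1))^{\Gal(\ol{k}/k)}$, i.e.\ to the Tate conjecture for divisors on $X$. To deduce outright finiteness of $\Br(X)$ one then needs an $\ell$-independent bound together with a parallel treatment of the $p$-primary part via crystalline or flat cohomology.

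The main obstacle is therefore the Tate conjecture for divisors, currently known only for restricted classes of surfaces over $\F_q$ (products of curves, abelian and Kummer surfaces, and $K3$ surfaces after Maulik, Charles, and Madapusi Pera). Since the section preamble explicitly solicits a ``central simple algebra'' approach, the most natural candidate would be to follow Lieblich's moduli-theoretic framework \cite{Lieblich:PI}: parametrise Brauer classes by moduli of Azumaya algebras or twisted sheaves on $X$ of bounded invariants, and bound those moduli using ramification data on $X$. Making such an argument unconditional---without importing the Tate conjecture from outside---is where I expect the difficulty to concentrate, and where genuinely new ideas seem to be required.
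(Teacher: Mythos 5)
You correctly recognize that Conjecture A is an open conjecture and cannot be proven; the only verifiable claim in the paper's formulation is the equality $\Br(X) = \Br_{\mathrm{ur}}(k(X))$, and your purity argument (Gabber, or Auslander--Goldman in low dimension, giving the residue exact sequence for the smooth surface $X$ and identifying $\Br(X)$ with $\bigcap_{x\in X^{(1)}}\Br(\OO_{X,x})$) is exactly the standard justification the paper leaves implicit when it asserts this equality. Your $\ell$-adic reduction to the Tate conjecture for divisors and the remark on the $p$-primary part match the surrounding discussion and Theorem~\ref{thm:milne}, and your concluding assessment---that the real difficulty is the Tate conjecture and that a genuinely CSA-theoretic route would require new ideas---is precisely the stance the section itself takes.
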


\subsection*{Relationship with the Tate Conjecture on algebraic cycles}

Let $\ell\neq p$ be a prime.  Let $P_2(X,t)$ be the characteristic
polynomial of the action of the (geometric) Frobenius morphism on the
$\ell$-adic cohomology $\H^2(\ol{X},\Q_{\ell}(1))$, where $\ol{X} = X
\times_{k} k_{\sep}$.  Let $\NS(X) = \Pic(X)/\Pic^0(X)$ denote the
\emph{N\'eron--Severi group} of $X$ and $\rho(X) = \rk{\Z} \NS(X)$ its
\emph{Picard number}.  Tate \cite[\S3]{Ta65} has earlier conjectured
a relationship between the Picard number and this characteristic
polynomial for surfaces.

\begin{conj*}[Tate's Conjecture T]
The Picard number of $X$ is equal to the multiplicity of the root
$t=q^{-1}$ in the polynomial $P_2(X,t)$.
\end{conj*}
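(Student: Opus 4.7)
The plan is to reduce Conjecture T to surjectivity of the $\ell$-adic cycle class map and then analyze the obstruction via the Brauer group. By the Weil conjectures together with purity, for any prime $\ell \neq p$ the multiplicity of $t = q^{-1}$ as a root of $P_2(X,t)$ equals $\dim_{\Q_\ell} \H^2(\ol{X}, \Q_\ell(1))^{\Gal(k_\sep/k)}$. The first Chern class gives an injective cycle map $c_1 \colon \NS(X) \otimes_\Z \Q_\ell \hookrightarrow \H^2(\ol{X}, \Q_\ell(1))^{\Gal(k_\sep/k)}$, so Conjecture T is equivalent to the assertion that this map is also surjective (for some, equivalently every, $\ell \neq p$).

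Next I would bring in the Kummer sequence $1 \to \mu_{\ell^n} \to \Gm \to \Gm \to 1$ on the \'etale site of $X$. Passing to cohomology and taking the limit in $n$, a standard diagram chase identifies the cokernel of $c_1 \otimes \Q_\ell$ with $T_\ell \Br(X) \otimes_{\Z_\ell} \Q_\ell$, where $T_\ell$ denotes the $\ell$-adic Tate module. Consequently Conjecture T at $\ell$ holds if and only if $\Br(X)\{\ell\}$ is finite; this is the well-known Artin--Tate equivalence of Conjectures A and T (modulo the $p$-part, which must be handled separately via flat or crystalline cohomology). The task therefore becomes: prove that only finitely many unramified classes in $\Br(k(X))\{\ell\}$ extend to Azumaya algebras on $X$.

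From the central simple algebra side, I would stratify $\Br(X)\{\ell\}$ by index and attempt to bound the count using residue sequences at the codimension-one points of $X$ together with a Saltman-type ramification analysis, exploiting the fact that $k(X)$ has transcendence degree $2$ over $\F_q$, hence $\mathrm{cd}\,k(X) \leq 3$. On favorable fibrations $X \to C$ over a smooth curve, one can further use the Leray spectral sequence to express $\Br(X)$ in terms of the generic fiber's Brauer group and N\'eron model data, importing techniques developed for the function field of a $p$-adic curve (Saltman, Lieblich, Parimala--Suresh, Brussel--Tengan). In parallel one could try to produce divisors directly by seeking Azumaya algebras of small index whose associated Brauer--Severi schemes admit rational sections over appropriate covers of $X$.

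The main obstacle---and the reason neither Conjecture A nor Conjecture T is known in general---is that such local, CSA-style techniques seem fundamentally incapable of producing enough divisor classes (or of bounding enough Brauer classes) without an input of global geometric structure. All presently known cases of Conjecture T---abelian surfaces, K3 surfaces via Madapusi Pera and Maulik, products of curves, Fermat-type surfaces, certain Hilbert modular surfaces---rely either on moduli-theoretic input (maps to Shimura varieties of PEL type) or on explicit $L$-function computation, and neither has a counterpart in the purely algebra-theoretic setting. A genuine proof by CSA methods would require a construction of Azumaya algebras parametrized by the Galois-fixed transcendental cohomology, and producing this dictionary is exactly what makes the problem so hard.
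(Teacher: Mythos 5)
This statement is a \emph{conjecture}, not a theorem: the paper does not prove it, and indeed no proof is known. The paper merely states Conjecture~T, explains its equivalence (Theorem~\ref{thm:milne}, due to Tate and Milne) with the finiteness of $\Br(X)$ and with Artin--Tate Conjecture~C, and surveys the special cases where it has been verified. Your write-up is honest about this: you correctly identify the standard reduction of Conjecture~T at a prime $\ell \neq p$ to surjectivity of the $\ell$-adic cycle class map, and then via the Kummer sequence to the finiteness of the $\ell$-primary part of $\Br(X)$---this is precisely the content of the Tate--Milne equivalence that the paper cites (\cite[Th.~5.2]{Ta66}, \cite[Th.~4.1]{Mi75}). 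Where your argument stops being a proof is exactly where you say it does: there is no known way to bound $\Br(X)$ using ramification/residue techniques of CSA type, and the known cases of Conjecture~T all rely on geometric or automorphic input (Shimura varieties, $L$-functions, isogenies) that has no counterpart in the algebra-theoretic toolkit. So the ``proof'' is a correct reduction plus an acknowledged open problem; you should label it as a discussion, not a demonstration, and not present the speculative Saltman/Lieblich-style bounding as a plausible route without warning that no one has made it work even for a single new family of surfaces.

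One small technical caution in your reduction: the identification of the cokernel of $c_1 \otimes \Q_\ell$ with $T_\ell \Br(X) \otimes \Q_\ell$ requires care because $\NS(X)$ can have torsion and because one must pass correctly between $\Z_\ell$-lattices and $\Q_\ell$-vector spaces; Tate's original argument handles this via the exact sequence relating $\NS(X) \otimes \Z_\ell$, $\H^2(\ol{X},\Z_\ell(1))$, and $T_\ell\Br(\ol{X})$ together with the Hochschild--Serre spectral sequence. This is standard but worth stating precisely rather than as a ``diagram chase.'' Also, the $p$-part is genuinely harder and not just a technicality: Milne's argument via crystalline cohomology (and Illusie's work on the de Rham--Witt complex) is needed, and the equivalence of finiteness of $\Br(X)\{\ell\}$ for \emph{one} $\ell$ with finiteness for \emph{all} $\ell$ including $p$ is itself a nontrivial theorem.
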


Tate \cite[\S4, Conjecture C]{Ta66} refines this by providing a
conjectural leading term for the polynomial $P_2(X,t)$ at $t=q^{-1}$.

\begin{conj*}[Artin--Tate Conjecture C]
$$
\lim_{s \to 1} \frac{P_2(X,q^{-s})}{\bigl(1-q^{1-s}\bigr)^{\rho(X)}}
= \frac{|\!\Br(X)|\, |\det(h_{\NS(X)})|}{q^{\alpha(X)} \,
|\NS(X)_{\mathrm{tors}}|^2}
$$
where $h_{\NS(X)}$ is the intersection form on the N\'eron--Severi
group modulo torsion, $\alpha(X) = \chi(X,\OO_X) - 1 + \dim\PPic_X$,
and $\PPic_X$ is the Picard variety of $X$.
\end{conj*}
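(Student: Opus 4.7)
The plan is to follow the standard approach of Artin and Tate, which reduces Conjecture C to Conjectures A and T together with a precise determination of the leading term of $P_2(X,t)$ at $t=q^{-1}$.

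First I would use the Grothendieck--Lefschetz trace formula to express $P_2(X,t) = \det\bigl(1 - \operatorname{Frob}^* t \mid \H^2(\ol{X},\Q_\ell(1))\bigr)$ for any prime $\ell\neq p$. The order of vanishing of $\bigl(1-q^{1-s}\bigr)^{\rho(X)}$ at $s=1$ is $\rho(X)$, so existence of the limit as a finite nonzero number is equivalent to the statement that the eigenvalue $1$ of $\operatorname{Frob}^*$ on $\H^2(\ol{X},\Q_\ell(1))$ has algebraic multiplicity exactly $\rho(X)$. By the Kummer sequence applied to $\Gm$ on the \'etale site of $X$ and passage to the inverse limit, one obtains a short exact sequence
\[
0 \to \NS(X)\otimes\Z_\ell \to \H^2(X,\Z_\ell(1)) \to T_\ell\Br(X) \to 0,
\]
together with a Hochschild--Serre comparison relating $\H^2(X,\Z_\ell(1))$ to the Galois-fixed part of $\H^2(\ol{X},\Z_\ell(1))$. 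Since $\NS(X)\otimes\Q_\ell$ maps isomorphically onto the algebraic part of the latter, the multiplicity of $1$ as an eigenvalue equals $\rho(X)$ if and only if $T_\ell\Br(X)\otimes\Q_\ell = 0$; this is how Conjecture T for $X$ and Conjecture A become intertwined.

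Next I would compute the leading coefficient. Writing $P_2(X,t) = (1-qt)^{\rho(X)} Q(t)$ with $Q(q^{-1})\neq 0$, the limit in question equals $Q(q^{-1})\cdot q^{\rho(X)}$ up to sign. On the transcendental quotient $\H^2_{\mathrm{tr}} := \H^2(\ol{X},\Z_\ell(1))/(\NS(X)\otimes\Z_\ell)$, the eigenvalue $1$ of $\operatorname{Frob}^*$ no longer appears (granting Conjecture T), and evaluating $\det(1-\operatorname{Frob}^* q^{-1})$ on this quotient produces exactly $|T_\ell\Br(X)|$ multiplied by a regulator contribution. The cup-product pairing
\[
\H^2(\ol{X},\Z_\ell(1))\times\H^2(\ol{X},\Z_\ell(1))\to \H^4(\ol{X},\Z_\ell(2))\cong\Z_\ell
\]
restricts on the algebraic part to the $\ell$-adic avatar of $h_{\NS(X)}$, and a standard linear-algebra lemma on Frobenius acting compatibly with a nondegenerate pairing extracts both $|\det h_{\NS(X)}|$ and the torsion correction $|\NS(X)_{\mathrm{tors}}|^2$. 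The factor $q^{\alpha(X)}$ with $\alpha(X) = \chi(X,\OO_X) - 1 + \dim\PPic_X$ arises from the compatibility with $P_1(X,q^{-s})$ and $P_3(X,q^{-s})$ imposed by the functional equation for the zeta function of $X$, combined with the Hodge-theoretic interpretation of $\chi(X,\OO_X)$.

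The main obstacle is unquestionably Conjecture T, i.e.\ finiteness of $\Br(X)$. All the computations above are clean \emph{given} finiteness of $T_\ell\Br(X)$ for a single $\ell\neq p$; without it, the putative ``leading term'' is indeterminate. This is precisely why the question posed in the surrounding discussion is so attractive: central simple algebra techniques such as ramification analysis for classes in $\Br(k(X))$, Saltman--de Jong-style period-index bounds on function fields of surfaces, or Harbater--Hartmann--Krashen patching give direct handles on $\Br(X) = \Br_{\mathrm{ur}}(k(X))$ and could in principle bound its $\ell$-primary part uniformly, yielding Conjecture A. Once finiteness is in hand, the residual task is the prime-by-prime compatibility of $\ell$-adic leading terms with the global rational number on the right-hand side---including at $\ell=p$, where crystalline cohomology must substitute for $\ell$-adic cohomology---which is the standard Artin--Tate bookkeeping.
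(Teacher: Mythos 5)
The statement you were asked to prove is labeled Conjecture C in the paper---it is an open conjecture, not a theorem. The paper neither proves it nor claims to; what it records (Theorem \ref{thm:milne}, citing Tate \cite{Ta66} for the prime-to-$p$ part and Milne \cite{Mi75} for the full statement) is that Conjectures A, T, and C are mutually \emph{equivalent}, so that the entire content reduces to proving finiteness of $\Br(X)$. Your sketch is therefore not a proof of the statement but a loose reconstruction of that equivalence reduction, and a genuine proof of Conjecture C is not available.

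Within that reconstruction there are also concrete errors. You repeatedly call the finiteness of $\Br(X)$ ``Conjecture T,'' but in the paper's labeling that is Conjecture A; Conjecture T is the statement that $\rho(X)$ equals the multiplicity of $q^{-1}$ as a root of $P_2(X,t)$. They are equivalent, but you should not conflate the labels when the surrounding text distinguishes them. More substantively, your leading-coefficient argument misuses the Tate module: from the Kummer sequence one gets a quotient $T_\ell\Br(X) = \varprojlim_n {}_{\ell^n}\!\Br(X)$, and if $\Br(X)$ is finite this Tate module is \emph{zero}, so the quantity ``$|T_\ell\Br(X)|$'' cannot supply the factor $|\!\Br(X)|$ that appears on the right-hand side of Conjecture C. What actually enters Tate's computation is the order $|\Br(X)(\ell)|$ of the $\ell$-primary torsion, extracted from an Euler-characteristic/regulator analysis as in \cite[\S5]{Ta66}, not the size of a Tate module. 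Finally, your justification of the $q^{\alpha(X)}$ factor as coming ``from the functional equation'' is only a gesture: in Tate's treatment this exponent is produced by comparing the $\ell$-adic Euler characteristic with $\chi(X,\OO_X)$ and the dimension of the Picard variety, and at $\ell = p$ Milne's crystalline comparison is essential---precisely the content the paper outsources to \cite{Mi75} and the Illusie reference correcting the $p=2$ hypothesis.
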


In fact, it turns out that showing the finiteness of $\Br(X)$ is the
``hard part,'' and all of the above conjectures are equivalent.

\begin{thm} \label{thm:milne}
The following statements are equivalent:
\begin{enumerate}
\item Conjecture A holds for $X$, i.e.\ $\Br(X)$ is finite.

\item $\Brp[\ell](X)$ is finite for some prime $\ell$ (with $\ell=p$
allowed).

\item Conjecture T holds for $X$.

\item Conjecture C holds for $X$.
\end{enumerate}
\end{thm}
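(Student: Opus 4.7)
The plan is to anchor all four statements to Conjecture T via the Kummer sequence, and handle the $p$-primary component separately using flat cohomology. The implication (1)$\Rightarrow$(2) is immediate from the definitions: finiteness of $\Br(X)$ forces finiteness of every $\ell$-primary part $\Brp[\ell](X)$.

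For the equivalence (2)$\Leftrightarrow$(3) at a fixed prime $\ell\neq p$, I would take the Kummer sequence $1\to\mu_{\ell^n}\to\Gm\to\Gm\to 1$ and apply \'etale cohomology to obtain
\begin{equation*}
0\to \NS(X)/\ell^n\to \H^2(X,\mu_{\ell^n})\to \Br(X)[\ell^n]\to 0.
\end{equation*}
Passing to the inverse limit produces
\begin{equation*}
0\to \NS(X)\otimes\Z_\ell \to \H^2(X,\Z_\ell(1)) \to T_\ell\Br(X)\to 0,
\end{equation*}
with $T_\ell\Br(X)$ the Tate module. Since $\Br(X)[\ell^\infty]$ is cofinitely generated, its finiteness is equivalent to $T_\ell\Br(X)=0$, which in turn is equivalent to the equality $\rho(X)=\rk{\Z_\ell}\H^2(X,\Z_\ell(1))$. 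Next, the Hochschild--Serre spectral sequence together with Deligne's proof of the Weil conjectures (used to show the Frobenius eigenvalues on $\H^1(\ol{X},\Q_\ell(1))$ and $\H^3(\ol{X},\Q_\ell(1))$ are not roots of unity) identifies $\rk{\Z_\ell}\H^2(X,\Z_\ell(1))$ with $\dim_{\Q_\ell}\H^2(\ol{X},\Q_\ell(1))^G$, and the latter equals the multiplicity of $t=q^{-1}$ as a root of $P_2(X,t)$. Thus (2) for this $\ell$ is equivalent to Conjecture T, which is independent of $\ell$. In particular, (2) holding for one prime yields (3), which then forces (2) for every prime, giving (1).

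For (3)$\Leftrightarrow$(4), I would use the Artin--Tate refinement of the above analysis: granted Conjecture T, one computes $\lim_{s\to 1}P_2(X,q^{-s})/(1-q^{1-s})^{\rho(X)}$ by combining the leading term of the local factor with Tate's formula relating the characteristic polynomial at $q^{-1}$ to the determinant of Frobenius acting on $\H^2(\ol{X},\Q_\ell(1))/\H^2(\ol{X},\Q_\ell(1))^G$. Substituting the Kummer exact sequence pairs this with the regulator $|\det h_{\NS(X)}|$ of the intersection form and the orders of the torsion subgroups, producing exactly the right-hand side of Conjecture C. Conversely, Conjecture C exhibits $|\!\Br(X)|$ as a finite number on the right-hand side, giving (4)$\Rightarrow$(1) at once.

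The hard part is the $p$-primary component at $p=\chr k$, where the Kummer sequence fails. Following Milne, I would replace $\mu_{\ell^n}$ by the flat (equivalently, logarithmic de Rham--Witt) sheaves $\nu_n(1)$ and use the comparison between flat and crystalline cohomology to produce analogues of the short exact sequence above, together with the corresponding rank and leading-term identifications. Once these flat-cohomological inputs are in place, the same pattern---Kummer-type sequence $\Rightarrow$ rank equality $\Rightarrow$ leading term formula---runs through verbatim, completing the circle of equivalences for every prime at once.
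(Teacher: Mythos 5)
The paper's ``proof'' of Theorem~\ref{thm:milne} is simply a citation: the prime-to-$p$ part is Tate \cite[Th.~5.2]{Ta66} and the full statement including the $p$-primary case is Milne \cite[Th.~4.1]{Mi75}. Your sketch reconstructs essentially the Tate--Milne argument, with the Kummer sequence and Hochschild--Serre at $\ell\ne p$, logarithmic de Rham--Witt/flat cohomology at $\ell=p$, and the Artin--Tate leading-term refinement for (3)$\Leftrightarrow$(4). So the \emph{approach} is the right one.

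There is, however, a real gap in the (2)$\Leftrightarrow$(3) step. You assert that $\dim_{\Q_\ell}\H^2(\ol X,\Q_\ell(1))^{\Gamma}$ ``equals the multiplicity of $t=q^{-1}$ as a root of $P_2(X,t)$.'' That is not a free fact: it is equivalent to semisimplicity of Frobenius on the generalized eigenspace for eigenvalue~$q$ in $\H^2(\ol X,\Q_\ell)$, which is itself part of what must be proven (the unconditional statement is only the inequality $\dim V^{\Gamma}\le\operatorname{mult}(q^{-1},P_2)$). As written, your chain $\rho(X)\le\dim V^\Gamma\le\operatorname{mult}$ only shows (3)$\Rightarrow$(2); the converse direction (2)$\Rightarrow$(3) needs the semisimplicity closed. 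Tate's argument closes it by duality: when $T_\ell\Br(X)=0$ the cycle map $\NS(X)\otimes\Q_\ell\to V^\Gamma$ is an isomorphism, and the nondegeneracy of the intersection form on $\NS(X)\otimes\Q$ together with Poincar\'e duality forces the natural map $V^\Gamma\to V_\Gamma$ to be an isomorphism, which is exactly semisimplicity at the eigenvalue~$1$ and hence $\dim V^\Gamma=\operatorname{mult}$. Without this duality step, the circle of implications does not close.

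A smaller point: at finite level the Kummer sequence gives $0\to\Pic(X)/\ell^n\to\H^2(X,\mu_{\ell^n})\to\Br(X)[\ell^n]\to 0$, not $\NS(X)/\ell^n$; the N\'eron--Severi group appears only after passing to the inverse limit, using that $\Pic^0(X)$ is a finite group over a finite field and so contributes nothing to $\varprojlim\Pic(X)/\ell^n$. Your limit sequence is correct, but the finite-level one as stated is not.
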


In \cite[Th.~5.2]{Ta66}, the prime-to-$p$ part of this theorem is
proved, i.e.\ the finiteness of the prime-to-$p$ part of the Brauer
group $\Brp[p'](X)$ is equivalent to Conjecture C up to a power of
$p$.  Milne \cite[Th.~4.1]{Mi75} proves the general case using
comparisons between \'etale and crystalline cohomology.  Note that in
\cite{Mi75}, Milne assumes that $p \neq 2$, though on his website
addendum page, he points out that by appealing to Illusie \cite{IlCot}
in place of a preprint of Bloch, this hypothesis may be removed.

In turn, Conjecture T is a special case of the Tate Conjecture on
algebraic cycles \cite[Conjecture 1]{Ta65}, made at the AMS Summer
Institute at Woods Hole, 1964.  Denote by $Z^i(X)$ the group of
algebraic cycles of codimension $i$ on a variety $X$.

\begin{conj*}[Tate Conjecture]
Let $k$ be a field finitely generated over its prime field,
$\Gamma = \Gal(k_{\sep}/k)$, and $X$ a geometrically connected, smooth,
and projective $k$-variety.  Then the image of the cycle map
$$
\mathrm{cl} : Z^i(\ol{X}) \to \H^{2i}(\ol{X},\Q_{\ell}(i))
$$
spans the $\Q_{\ell}$-vector subspace
$\H^{2i}(\ol{X},\Q_{\ell}(i))^{\Gamma}$ of Galois invariant classes.
\end{conj*}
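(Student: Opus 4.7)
The plan is to proceed through the framework already developed earlier in the excerpt, reducing the general conjecture to the finiteness of Brauer groups of surfaces, which is the setting where the techniques of central simple algebras apply most directly. For $\dim X = 2$ and $i = 1$, Theorem~\ref{thm:milne} shows the Tate conjecture for divisors is equivalent to finiteness of $\Br(X)$, i.e., to Artin--Tate Conjecture~A; so in that case the plan reduces to attacking finiteness of the unramified Brauer classes on $k(X)$.

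First, I would attempt to reduce an arbitrary pair $(X, i)$ with $k$ finitely generated over its prime field to the divisor case on smooth projective surfaces. By a spreading-out argument one arranges $X$ to be the generic fibre of a smooth projective morphism over a smooth base over $\F_q$ or $\mathbb{Z}$, and by weak Lefschetz applied to iterated smooth hyperplane sections, Galois-invariant classes in $\H^{2i}(\ol{X}, \Q_\ell(i))^{\Gamma}$ restrict injectively to cohomology of a $(2i)$-dimensional subvariety. Combining Poincar\'e duality with correspondences on products then converts the problem of codimension-$i$ cycles on $X$ into the divisor case on an auxiliary smooth projective variety; if the Tate conjecture for divisors were known on \emph{all} smooth projective varieties over finitely generated fields, the general case would follow.

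Second, in the divisor case over a finite field $k = \F_q$, I would exploit the Kummer sequence. Taking $\Gamma$-cohomology of
$$ 0 \to \Pic(\ol{X})/\ell^n \to \H^2(\ol{X}, \mu_{\ell^n}) \to \Br(\ol{X})[\ell^n] \to 0 $$
and passing to the inverse limit in $n$ yields an exact sequence linking $\NS(X) \otimes \Q_\ell$, the Galois-invariant $\ell$-adic cohomology $\H^2(\ol{X}, \Q_\ell(1))^{\Gamma}$, and the $\ell$-adic Tate module of $\Br(\ol{X})^{\Gamma}$. Finiteness of $\Br(X)$ forces the last term to vanish rationally, giving surjectivity of the cycle map and hence Conjecture~T. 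To produce new cases of finiteness of $\Br(X)$ I would try to extend the Artin--Madapusi~Pera argument for K3 surfaces (via Kuga--Satake and Faltings' theorem on abelian varieties) to further families admitting a sufficiently rich Hodge-theoretic interpretation, and to push the $p$-adic crystalline and syntomic methods of Morrow and Kim--Madapusi~Pera to handle the $\ell = p$ part.

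The main obstacle is precisely the finiteness of $\Br(X)$: by Theorem~\ref{thm:milne} it is equivalent to the entire conjecture for divisors on surfaces, and has resisted proof outside the families just mentioned. The Lefschetz reduction does not lessen the difficulty for higher codimension: there is no known analogue of the Kummer sequence for codimension $i \geq 2$, so the transcendental input supplied by the Brauer group in codimension~$1$ has no evident counterpart. A realistic outcome of this plan is therefore to enlarge the list of varieties for which Conjecture~A is known (e.g., certain Calabi--Yau threefolds, Shimura varieties of orthogonal type, or surfaces whose generic Brauer classes are controlled by \S\ref{PI.sec}-type period-index bounds), rather than to settle the Tate conjecture in full.
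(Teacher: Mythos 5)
This statement is the Tate Conjecture, which the paper presents as a \emph{conjecture}---an open problem---not as a theorem with a proof. The paper includes it purely as background, to motivate the (also open) question of whether central simple algebra techniques can make progress on Conjectures A, C, and T. So there is no ``paper's own proof'' to compare against. You have correctly recognized this: your proposal is an honest program sketch rather than a claimed proof, and your conclusion---that a realistic outcome is to enlarge the known cases rather than settle the conjecture---is the right one.

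That said, two cautions about the reduction you sketch. First, the claim that the Tate conjecture for divisors on all smooth projective varieties over finitely generated fields would imply the general codimension case is not a known theorem; the weak Lefschetz plus correspondences strategy converts questions \emph{to} middle-dimensional cohomology, but the Brauer-group/Kummer-sequence machinery that makes codimension 1 tractable has no counterpart in higher codimension, as you yourself note. (Over finite fields there are partial implications of this flavor---e.g., work of Morrow relating Tate for divisors on surfaces to Tate for divisors on all varieties---but these do not give the full codimension reduction.) Second, your inverse-limit Kummer argument needs the a priori input that the divisible part of $\Br(X)$ vanishes (equivalently that the $\ell$-adic Tate module of $\Br(X)$ is zero), which is precisely what finiteness of $\Br(X)$ supplies via Theorem~\ref{thm:milne}; this is the correct logic, but it is worth keeping the direction of implication straight, since it is the finiteness statement that is the genuinely hard input, not the cohomological bookkeeping. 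Within the scope of what the paper actually contains, your proposal is a reasonable and accurate summary of the standard approach and its obstacles.
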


In the case of interest to us, of (co)dimension one cycles on a
surface over a finite field, the cycle map factors through the
N\'eron--Severi group
and Milne \cite[Th.~4.1]{Mi75} (following Tate \cite[\S3]{Ta65})
proves that the Tate conjecture (in this case, equivalent to $\rho(X)
= \rk{\Z} \H^2(\ol{X},\Q_{\ell}(1))^{\Gamma}$) is equivalent to
Conjecture T.

\Wik Conjecture T is known for many special classes of surfaces:
rational surfaces (Milne \cite{Mi70}),
Fermat hypersurfaces $X_0^n + X_1^n + X_2^n + X_3^n = 0$, with
$p \equiv 1 \bmod n$ or $p^a \equiv -1 \bmod n$ for some $a$
(Tate \cite[\S3]{Ta65}),
abelian surfaces and products of two curves (Tate \cite[Th.~4]{Ta66a}),
$K3$ surfaces with a pencil of elliptic curves (Artin and
Swinnerton-Dyer \cite[Th.~5.2]{ArSD73}),
$K3$ surface of finite height (where $p \geq 5$) (Nygaard
\cite[Cor.~3.4]{Ny83} and Nygaard--Ogus \cite[Theorem
0.2]{NyOg85}),
and
certain classes of Hilbert modular surfaces
(Harder--Langlands--Rapoport \cite{HLR86}, Murty--Kumar--Ramakrishnan
\cite{MKR87}, Langer \cite{Lan00}, \cite{Lan04}).
Furthermore, Conjecture T respects birational equivalence and finite
morphisms.  In particular, Conjecture T holds for unirational surfaces
and Kummer surfaces.
Also, Conjecture T holds for any surface which lifts to a smooth
surface in characteristic zero with geometric genus $p_g = 0$.


\subsection*{Relationship with the Birch and Swinnerton-Dyer Conjecture}

The refined statement of the Birch and Swinnerton-Dyer (BSD)
conjecture over global fields of characteristic $p$ appears in
\cite[\S1]{Ta66}.

\begin{conj*}[BSD Conjecture]
Let $K$ be a global field of characteristic $p$, $A$ an abelian
$K$-variety, and $L^*(A,s)$ the $L$-function of $A$ with respect to a
suitably normalized Tamagawa measure.  Then $L^*(A,s)$ has an analytic
continuation to the complex plane and
$$
\lim_{s\to 1} \frac{L^*(A,s)}{(s-1)^{r}} = \frac{|\Sha(A)| \,
|\det(h_A)|}{|A(K)_{\mathrm{tors}}| \, |\hat{A}(K)_{\mathrm{tors}}|}
$$
where $h_A$ is the N\'eron--Tate height pairing on $A(K)$ modulo
torsion, $\hat{A}$ is the dual abelian variety, and $r = \rk{\Z}A(K)$
is the rank of the Mordell--Weil group.
\end{conj*}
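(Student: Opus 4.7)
The plan is to reduce the BSD conjecture for $A/K$ to the Artin--Tate conjecture for an associated smooth projective surface $Y$ over $k=\F_q$, exploiting exactly the chain of equivalences that the excerpt has built for surfaces and thereby putting the entire statement on the shoulders of finiteness of an unramified Brauer group.

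First I would realize $K$ as the function field of a smooth, projective, geometrically connected curve $C/k$, and attach to $A/K$ a proper regular model $\pi\colon Y\to C$---for instance a desingularization of a projective compactification of the N\'eron model. (For $A$ an elliptic curve this is Tate's original construction; in higher dimension one must be careful at bad fibers, and one may prefer to work with $A$ together with its dual so as to control the polarization.) Then $Y$ is a smooth projective $k$-surface whose generic fiber recovers $A$. Next I would compare the two sides of the two conjectures through Grothendieck's analysis of $R^i\pi_*\Q_\ell$. On the analytic side, the interesting factor $P_2(Y,q^{-s})$ of the zeta function of $Y$ at $s=1$ should match $L^*(A,s)$ up to completely explicit factors coming from bad fibers and from the ``constant'' pieces $H^2(\overline{C},\pi_*\mu_\ell)$ and $H^0(\overline{C},R^2\pi_*\mu_\ell)$. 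On the arithmetic side, the Shioda--Tate formula provides a short exact sequence relating $A(K)$, $\NS(Y)$ and the components of bad fibers, matching the ranks and (up to torsion) the regulators $\det(h_A)$ and $\det(h_{\NS(Y)})$. Finally, the Leray/Grothendieck analysis of $H^2(Y,\Gm)$ yields a comparison with finite kernel and cokernel between $\Sha(A)$ and $\Br(Y)$. Assembled, these dictionaries convert BSD for $A$ into Conjecture C for $Y$.

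By Theorem~\ref{thm:milne}, Conjecture C for $Y$ reduces to finiteness of $\Br(Y)$, or equivalently of $\Brp[\ell](Y)$ for some single prime $\ell$ (with $\ell=p$ allowed). This is the point at which the central-simple-algebra viewpoint of the excerpt becomes directly relevant: since $\Br(Y)=\Br_{ur}(k(Y))$, what must be shown is that the division algebras over $k(Y)$ unramified at every codimension-one point of $Y$ form a finite group.

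The hard part will be precisely this finiteness. For $\ell\neq p$, one would try to bound the index and period of classes in $\Br_{ur}(k(Y))$ using period--index results over $C_2$-type fields (cf.\ \S\ref{PI.sec}) together with a Saltman-style ramification analysis along the codimension-one points of $Y$, in order to show that $\Brp[\ell](Y)$ is not only of bounded exponent but of bounded order. For $\ell=p$, purely CSA techniques are essentially ruled out: finiteness of the $p$-primary part genuinely requires crystalline and logarithmic Hodge--Witt cohomology in the style of Milne--Illusie (and Kato--Trihan for the passage back to BSD). The remaining obstacle---matching the Tamagawa-measure normalization of $L^*(A,s)$ with the right-hand side of Conjecture C, and accounting for Tamagawa numbers on the $A$-side versus fiber multiplicities and discriminants on the $Y$-side---is nontrivial but should be controllable by known local computations on N\'eron models, so that the crux of the proposal is the single statement that $\Br_{ur}(k(Y))$ is finite.
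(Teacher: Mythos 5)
The statement you have been given is labeled a \emph{Conjecture} in the paper, not a theorem; the BSD conjecture over function fields of characteristic $p$ is open in general, and the paper neither claims nor contains a proof of it. Your proposal is accordingly not a proof but a \emph{reduction}: you attach to $A/K$ a regular model $\pi\colon Y\to C$, invoke Theorem~\ref{thm:D} (Liu--Lorenzini--Raynaud's ``Conjecture d'') to pass from BSD for $A$ to Conjecture~C for $Y$, and then invoke Theorem~\ref{thm:milne} to pass from Conjecture~C to finiteness of $\Br(Y)=\Br_{ur}(k(Y))$. That chain is exactly the chain of equivalences the paper itself records, and it is an \emph{equivalence}, not a one-way reduction to something easier: Conjecture~A (finiteness of $\Br(Y)$) is just as open as BSD, and Theorem~\ref{thm:D} makes the two literally equivalent. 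You acknowledge this yourself (``the crux of the proposal is the single statement that $\Br_{ur}(k(Y))$ is finite''), so the essential content of the conjecture has not been established, only relocated.

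Two further points worth flagging. First, the paper's Theorem~\ref{thm:D} concerns a fibered surface $f\colon X\to C$ whose generic fiber is a smooth projective \emph{curve}, and the abelian variety in the equivalence is its \emph{jacobian}; to apply it to an arbitrary abelian variety $A/K$ of dimension $g>1$ you would first have to realize $A$ (up to the isogenies and Weil restrictions tolerated by BSD) as a jacobian of a relative curve, which is a nontrivial geometric input that your sketch elides. Second, the hoped-for attack on finiteness of $\Brp[\ell](Y)$ via period--index bounds over $C_2$-fields and Saltman-style ramification analysis gives at best a bound on the \emph{exponent} of the $\ell$-primary unramified Brauer group, not on its \emph{order}; bounded exponent plus the a priori possibility of infinite corank is precisely what makes Conjecture~A hard, and no central-simple-algebra technique in the paper closes that gap. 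In short: what you have written is a correct account of why BSD, Artin--Tate~C, Tate~T, and Artin--Tate~A are all equivalent for these surfaces, which is what the section is surveying, but it is not a proof of any of them.
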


Milne \cite{Mi68} proves the BSD conjecture for constant abelian
varieties.  Otherwise, there are a host of results on the BSD
conjecture for particular classes of abelian varieties over global
fields of characteristic $p$.  For a modern treatment of the BSD
conjecture for abelian varieties over number fields, see
Hindry--Silverman \cite[Appendix F.4]{hindry-silverman}.

Now, let $C$ be a geometrically connected, smooth, and projective
$k$-curve with function field $K=k(C)$ and suppose that there is a
proper flat morphism $f : X \to C$ with generic fiber $X_K$ a
geometrically connected, smooth, and projective $K$-curve.  Let $A$ be
the jacobian of $X_K$.

\begin{thm}[Artin--Tate Conjecture d] \label{thm:D}
Conjecture C for the $k$-surface $X$ is equivalent to
the BSD conjecture for the jacobian $A$ of the
$K$-curve $X_K$.
\end{thm}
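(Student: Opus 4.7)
The plan is to establish the equivalence by matching the two leading-term formulas factor-by-factor, using the Leray spectral sequence for $f\colon X\to C$ to relate $\ell$-adic cohomology of $X$ to the $L$-function of $A/K$, and using Grothendieck's comparison of $\Br(X)$ with $\Sha(A/K)$ together with the Shioda--Tate formula to relate the group-theoretic invariants on both sides. Setting $R^i = R^i f_*\Q_\ell$, we have $R^0=\Q_\ell$, $R^2$ concentrated at closed points of $C$ with stalks controlled by irreducible components of singular fibers, and $R^1$ a constructible sheaf whose generic stalk is the $\ell$-adic Tate module of the Jacobian. The Leray filtration on $\H^2(\ol X,\Q_\ell(1))$ then decomposes it into three Galois-stable pieces coming from $\H^0(\ol C,R^2(1))$, $\H^1(\ol C,R^1(1))$, and $\H^2(\ol C,R^0(1))$, yielding a factorization
\[
P_2(X,t) \;=\; P^{\mathrm{triv}}(X,t)\cdot L(A/K,t),
\]
where $P^{\mathrm{triv}}$ collects contributions from the class of the fiber and from the components of reducible fibers, and $L(A/K,t)$ is—by the standard identification of the middle $\ell$-adic cohomology of a family of curves with the Tate module of the Jacobian of the generic fiber—the Hasse--Weil $L$-function of $A$ over $K$.

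Next I would match the algebraic invariants. The Shioda--Tate formula gives an orthogonal decomposition, modulo torsion,
\[
\NS(X) \;=\; T \;\oplus\; A(K)/\mathrm{tors},
\]
where $T$ is the ``trivial lattice'' spanned by a chosen multisection, the class of a fiber, and the components of reducible fibers. Under this decomposition the intersection pairing $h_{\NS(X)}$ restricts on $T$ to an explicit bilinear form whose discriminant is computable from the component groups $\Phi_v$ of the N\'eron model of $A$ (i.e., from local Tamagawa-type factors $c_v$), while the induced pairing on $A(K)/\mathrm{tors}$ is—up to sign—the N\'eron--Tate height pairing $h_A$. In particular $\rho(X) = 2 + \rk{\Z}A(K) + \sum_v(m_v-1)$, which matches the orders of vanishing at $s=1$ on the two sides.

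The central bridge is then Grothendieck's exact sequence (Brauer III)
\[
0 \to \Br(C) \oplus B \to \Br(X) \to \Sha(A/K) \to 0,
\]
with $B$ an explicit group built from the $\Phi_v$. Since $\Br(C)=0$ for a smooth projective curve over a finite field (Tsen plus class field theory), finiteness of $\Br(X)$ is equivalent to finiteness of $\Sha(A/K)$, and the ratio $|\Br(X)|/|\Sha(A/K)|$ is expressible in the same Tamagawa-type data as the discriminant of the trivial lattice $T$. Combining this with the Shioda--Tate comparison of discriminants, the $c_v$'s cancel between the Brauer/Sha ratio and the ratio $|\det h_{\NS(X)}|/|\det h_A|$. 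Finally, Leray applied to $f_*\OO_X$ together with Riemann--Roch on $C$ converts $\chi(X,\OO_X)-1+\dim\PPic_X$ into the analogous invariants for $A/K$, so that $q^{\alpha(X)}$ and $|\NS(X)_{\mathrm{tors}}|^2$ reassemble to the factor $|A(K)_{\mathrm{tors}}|\,|\hat A(K)_{\mathrm{tors}}|$ (using the Weil pairing to pair torsion of $A$ with torsion of its dual).

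The hard part I expect is the careful bookkeeping of bad-fiber contributions: verifying that the local N\'eron-component factors $c_v$ enter $\det h_{\NS(X)}$ and the ratio $|\Br(X)|/|\Sha(A/K)|$ in matching ways so that they cancel in the formulas of Conjecture C versus BSD, and handling fibers with multiplicative or additive reduction uniformly. A further subtlety is the prime $p=\chr k$: the factorization of $P_2$ and the Brauer--Sha exact sequence must be replayed using crystalline cohomology and Illusie's comparison of \'etale and crystalline cohomology, exactly as invoked in Milne's removal of the $p\ne 2$ hypothesis earlier in this section, in order to obtain the full (not merely prime-to-$p$) equivalence.
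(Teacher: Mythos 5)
The paper does not prove this theorem: it is a survey, and the result is attributed to Liu--Lorenzini--Raynaud \cite{LLR05} (building on earlier work of Artin--Tate, Gordon, and Milne), so the proper comparison is against that proof rather than against anything reproduced in the paper.

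Your outline reproduces the classical architecture correctly: the Leray spectral sequence for $f$ factors $P_2(X,t)$ through $L(A/K,t)$; Shioda--Tate matches $\NS(X)$ with $A(K)$ plus trivial-lattice contributions; a Brauer/Sha comparison closes the loop; and crystalline/Illusie inputs handle the prime $p$. This is essentially Gordon's argument and it works when $f$ admits a \emph{section}.

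The genuine gap is in your Brauer--Sha step. The ``Brauer~III'' exact sequence you invoke,
\[
0 \to \Br(C) \oplus B \to \Br(X) \to \Sha(A/K) \to 0,
\]
requires a section of $f$ (or a cohomological hypothesis tantamount to one). A proper flat $f\colon X \to C$ over a finite field need not have a section, and the correct relation in the general case is the one stated in the very next theorem of the paper,
\[
|\Sha(A)| \,\prod_{v \in C} \delta_v\delta_v' = |\!\Br(X)|\,\delta^2,
\]
where $\delta$ is the index of the generic fiber $X_K$ and $\delta_v,\delta_v'$ are the local indices and periods. These defect terms do not vanish in general, and proving that they cancel against correspondingly modified trivial-lattice discriminants in $\det h_{\NS(X)}$ and the factor $q^{\alpha(X)}$ is exactly the content of \cite{LLR05}; it is also where the surprising conclusion that $|\!\Br(X)|$ is a perfect square comes from. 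Your Shioda--Tate decomposition, built on ``a chosen multisection,'' similarly introduces discriminant discrepancies that have to be tracked against the same $\delta$-defects, and your remark that the ``hard part'' is the bad-fiber bookkeeping misidentifies the real obstruction. As written, your sketch closes cleanly only in the sectional case; in the general case the equivalence of leading terms requires the period/index bookkeeping that is the actual new contribution of Liu--Lorenzini--Raynaud.
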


Tate explains that ``this conjecture gets only a small letter d
as label, because it is of a much more elementary nature'' than
Conjecture C or the BSD conjecture, see \cite[\S4]{Ta66}.  The
Artin--Tate conjecture d is proved by Liu--Lorenzini--Raynaud
\cite{LLR05}, following a suggestion of Leslie Saper.
In fact, as shown in \cite[Proof of Theorem 1]{LLR05}, any
geometrically connected, smooth, and projective $k$-surface $X$ is
birational to another such surface $X'$ admitting a proper flat
morphism $f' : X' \to \mathbb{P}^1$ with geometrically connected,
smooth, and projective generic fiber.  In particular, up to birational
transformations, we may replace any surface by a surface fibered over
a curve to which Conjecture d applies.

\subsection*{Square order of the Brauer group}

If either group is assumed to be finite, then a precise relationship
between $|\!\Br(X)|$ and $|\Sha(A)|$ is given in
Liu--Lorenzini--Raynaud \cite[Th.~4.3]{LLR04}, \cite[Corollary
3]{LLR05} in terms of the local and global periods and indices of the
curve $X_K$.

\begin{thm}[Liu--Lorenzini--Raynaud {\cite[Cor.~3]{LLR05}}]
Let $f : X \to C$ be as above.  For each closed point $v$ of $C$,
denote by $K_v$ the completion of $K=k(C)$ at $v$, and by $\delta_v$
and $\delta_v'$ the index and period, respectively, of the $K_v$-curve
$X_{K_v}$.  Denote by $\delta$ the index of the $K$-curve $X_K$.
Assume that either $Br(X)$ or $\Sha(A)$ is finite, where $A$ is the
jacobian of $X_K$.  Then
$$
|\Sha(A)| \, \prod_{v \in C} \delta_{v} \delta_{v}' = |\!\Br(X)| \, \delta^2
$$
and $|\!\Br(X)|$ is a square.
\end{thm}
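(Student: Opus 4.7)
The plan is to combine Grothendieck's analysis of the Brauer group of a fibered surface via the Leray spectral sequence for $f\colon X\to C$ with coefficients in $\Gm$, together with a careful bookkeeping of local and global period--index gaps. Since $C$ is smooth projective over a finite field, $\Br(C)=0$ by Tsen plus class field theory, so the low-degree terms of this spectral sequence present $\Br(X)$ as controlled by $\H^1(C, R^1 f_* \Gm)$. The sheaf $R^1 f_* \Gm$ is governed by the relative Picard functor $\Pic_{X/C}$, whose identity component is the N\'eron model $\mathcal{A}$ of the Jacobian $A$.

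Next I would unfold the short exact sequence
\[
0 \to \mathcal{A} \to \Pic_{X/C} \to \Phi \to 0,
\]
where $\Phi$ is a skyscraper sheaf at the finitely many places of bad reduction, encoding both the component groups of the N\'eron model and the degree map to $\Z$. Taking cohomology over $C$ and comparing with the Poitou--Tate-style localization sequence for $A$ over $K=k(C)$, one identifies the locally trivial classes in $\H^1(C,\mathcal{A})$ with $\Sha(A)$, while the local defects at each $v$ are controlled precisely by $\delta_v$ (the obstruction to lifting a degree-one $K_v$-divisor class to an actual divisor) and by $\delta_v'$ (the order in $\Br(K_v)$ of the gerbe of degree-one line bundles on $X_{K_v}$). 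Multiplying over $v$ produces the factor $\prod_v \delta_v \delta_v'$.

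The factor $\delta^2$ arises from the global comparison. The generic-fiber period--index pair $(\delta',\delta)$ controls how far the class of $\Pic^1_{X_K/K}$ in $\Br(K)$ can be killed over $K$, and an edge-map argument in the Leray spectral sequence shows that this class contributes $\delta^2$ rather than $\delta$ because it enters the formula both through the quotient $\Pic_{X/C}/\mathcal{A}$ and through the image of the Brauer class of the $\Pic^1$-gerbe in $\Br(X)$. This step recovers a Gordon-type formula once finiteness of either $\Br(X)$ or $\Sha(A)$ is assumed, which by Theorem~\ref{thm:milne} forces finiteness of the other.

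For the squareness of $|\Br(X)|$, I would invoke the Cassels--Tate pairing refined \`a la Poonen--Stoll: because $A$ is the Jacobian of a curve, its canonical principal polarization comes from a symmetric line bundle, which forces $|\Sha(A)|$ to be a perfect square whenever finite. The formula above then reduces squareness of $|\Br(X)|$ to verifying that $\prod_v \delta_v\delta_v'/\delta^2$ is a square. This is checked by combining the local Lichtenbaum relation (period divides index divides twice the period, so $\delta_v/\delta_v'\in\{1,2\}$) with a global reciprocity for the residues of the $\Pic^1$-gerbe in $\oplus_v\Br(K_v)/\Br(K)$. The main obstacle, in my view, is Step~3: pinning down that the global defect is exactly $\delta^2$, because this requires a sharp comparison between the connecting maps on the $E_2^{0,1}$-page of Leray and cup-products against the Brauer group of the generic fiber, and it is precisely here that the refinement of Gordon's earlier formula is delicate.
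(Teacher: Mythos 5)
Your overall strategy---the Leray spectral sequence for $f_*\Gm$, the relation between $R^1f_*\Gm$ and the N\'eron model $\mathcal{A}$ of $A$, localization comparing $\Sha(A)$ to locally trivial classes, and tracking period/index defects at each fiber---is indeed the classical framework going back to Artin, Grothendieck and Gordon that Liu--Lorenzini--Raynaud refine. But your squareness argument contains a genuine error, and it is exactly the point where the delicacy lives.

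You assert that ``because $A$ is the Jacobian of a curve, its canonical principal polarization comes from a symmetric line bundle, which forces $|\Sha(A)|$ to be a perfect square whenever finite.'' This is precisely the classical claim that Poonen and Stoll showed to be \emph{false}: for a Jacobian $A$ over a global field with $\Sha(A)$ finite, $|\Sha(A)|$ is only guaranteed to be a square \emph{or twice a square}, and the non-square case genuinely occurs (their Annals paper gives explicit hyperelliptic curves over $\Q$). The obstruction is whether the theta divisor, hence the principal polarization, is represented by a $K$-rational divisor; when it is not, the Cassels--Tate pairing is skew-symmetric but not alternating. This is the same phenomenon that Manin's (erroneous) order-$2$ examples for $\Br(X)$ were tangled up with. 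The whole point of \cite[Cor.~3]{LLR05} is that $|\Br(X)|$ is a square \emph{despite} $|\Sha(A)|$ possibly being twice a square, because the correction factor $\prod_v\delta_v\delta_v'/\delta^2$ is twice a square exactly in the bad case. Your proof cannot possibly establish this cancellation if it starts from the false premise that $|\Sha(A)|$ is always a square. You do cite Poonen--Stoll, but you use them to support the very claim they disprove.

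A second, smaller gap: your treatment of the global factor $\delta^2$ is too vague. The formula in \cite[Th.~4.3]{LLR04} naturally produces $\delta\delta'$ (global index times global period of $X_K$), and obtaining $\delta^2$ requires the additional theorem---proved in \cite{LLR05} using the finiteness hypothesis and global reciprocity---that the period of the generic fiber equals its index. Your sentence ``it enters the formula both through the quotient $\Pic_{X/C}/\mathcal{A}$ and through the image of the Brauer class'' does not amount to a proof that $\delta'=\delta$; it is at best a heuristic for why two such factors should appear, and it is not clear from your account why the second factor should be $\delta$ rather than $\delta'$.

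In short: the Leray/N\'eron framework and the Poonen--Stoll tools you name are the right ones, but you have misapplied the latter, and the two places you flag as ``the main obstacle'' (the $\delta^2$ factor) and wave away (the squareness) are both genuinely open in your sketch.
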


As explained in the introduction of \cite{LLR05}, the fact that the
Brauer group of $X$ has (conjecturally) square order is surprising
given the history of the subject.  Tate \cite{Ta66} (see also Milne
\cite[Th.~2.4]{Mi75}) exhibit a skew-symmetric bilinear pairing,
\begin{eqnarray*}
\Br(X) \times \Br(X) &\to& \Q/\Z \\
(x,y) &\mapsto& \mathrm{tr}(\tilde{x} \cup \beta\tilde{y})
\end{eqnarray*}
on the Brauer group of a surface over a finite field, defined by
taking lifts $\tilde{x}, \tilde{y} \in \H^2_{\text{\'et}}(X,\mu_n)$ of
$x, y \in \Brp[n](X)$, where $\beta : H_{\text{\'et}}^2(X,\mu_n) \to
H_{\text{\'et}}^3(X,\mu_n)$ is the coboundary map arising from the
exact sequence $1 \to \mu_n \to \mu_{n^2} \to \mu_n \to 1$, and
$\mathrm{tr} : H_{\text{\'et}}^5(X,\mu_n^{\ot 2}) \cong \Z/n\Z$ is the
arithmetic trace map induced from duality and the Hochschild--Serre
spectral sequence, as in \cite[Formula 5.4]{Ta66}.  This pairing is
non-degenerate on the prime-to-$p$ part of the Brauer group and has
kernel consisting of the maximal divisible subgroup of $\Br(X)$.  In
particular, if $\Br(X)$ is assumed to be finite, it must have order
either a square or twice a square.

In the late 1960s, Manin published examples of rational surfaces over
finite fields with Brauer group of order 2.  Finally, in 1996, Urabe
found a mistake in Manin's examples and proved that rational surfaces
always have Brauer groups of square order.  The proof that
$\Brp[2](X)$ is a square in \cite{LLR05} uses knowledge of
${}_2\Sha(A)$, via Theorem \ref{thm:D}.

The (conjectural) square order of $\Br(X)$ would immediately result
from the existence of a non-degenerate alternating pairing.  Of
course, this is only a nontrivial question on the 2-primary torsion
$\Br(X)(2)$ of the Brauer group.

\begin{prob}
Given a geometrically connected, smooth, projective surface $X$ over a
finite field $k$, is there a non-degenerate alternating pairing on
$\Br(X)(2)$ with values in $\Q/\Z$?
\end{prob}

This is addressed in Urabe \cite[Th.~0.3]{urabe:bilinear}, where
it is proved that the restriction of Tate's skew-symmetric form to
$\text{ker}(\Br(X) \to \Br(\ol{X}))$ is alternating.  As a corollary,
this settles the question whenever
$H_{\text{\'et}}^3(\ol{X},\Z_2(1))_{\text{tors}}$ is trivial, in
particular, this is the case for rational surfaces, ruled surfaces,
abelian surfaces, $K3$ surfaces, or complete intersections in
projective space.  The question is still open, for example, if $X$ is
an Enriques surface.  Also see Poonen--Stoll's \cite[\S11]{PoSt99}
possible strategy for finding a counterexample.

\section{The tensor product problem}\label{tensor.sec}

Both surveys \cite{Am:surv} and \cite{Salt:fd} address problems concerning division algebra
decomposability. 
We say an $F$-division algebra $D$ is \emph{decomposable} if there exist $F$-division
algebras $A$ and $B$ such that $D\simeq A\otimes_F B$.
All division algebras decompose into $p$-primary tensor factors, and 
all $p$-primary factors with equal period and index are indecomposable,
so the problem of when a division algebra decomposes
is only interesting for division algebras of unequal prime-power period and index.
Albert proved that any division algebra of period $2$ and index 4 is decomposable in \cite{Albert:deg4},
and for a long time all known
division algebras of unequal period and index were decomposable by construction.
However, in 1979 Saltman and Amitsur--Rowen--Tignol independently constructed 
nontrivially indecomposable division algebras, in \cite{Salt:indec} and \cite{ART}.
In 1991 Jacob produced indecomposable division algebras of prime period $p$ and index $p^n$ for any $n\ge 1$, 
except when $p=2$ and $n=2$ (\cite{Jacob:indec}).  
Thus by the time of Saltman's survey \cite{Salt:fd}, examples had been produced of every conceivable period-index
combination, though not in every characteristic.

One of the few remaining gaps in the list of \cite{Salt:fd}
was filled shortly afterwards by Karpenko,
who produced indecomposable division algebras (of a generic type) in any characteristic with 
any odd prime-power period $p^m$ and index $p^n$, for any $m\leq n$ (see also \cite{McK08}), 
and period $2$ and index $2^n$ for $n\geq 3$.
The odd degree examples used the index reduction formula \cite{SvdB} of Schofield and van den Bergh
and the theory of cycles on Severi--Brauer varieties (\cite{Karp:tor}),
and the $2$-power degree examples used his theorem \cite[Prop.~5.3]{Karp:cod} that
a division algebra $A$ of prime period is indecomposable
if the torsion subgroup of the Chow group of codimension-2 cycles 
on $A$'s Severi--Brauer variety is nontrivial.
The two problems on indecomposability asked in \cite{Salt:fd} were settled by
Brussel in \cite{Bru00} (Problem 6) and \cite{B:embed} (Problem 7), respectively.
However, Brussel's solution to Problem 6, which asked for an indecomposable division algebra of prime degree $p^n$
that becomes decomposable over a prime-to-$p$ extension, relied on the absence of a $p$-th
root of unity, and Problem 6 remains open for fields containing the $p$-th roots of unity.
Thus the search for a ``geometric'' obstruction to decomposability is still interesting.
For example, Karpenko's examples resulting from Karpenko's 
theorem are manifestly stable under prime-to-$p$ extension.

\begin{prob}\label{pr1} Is there a structural criterion for the tensor product
$A\otimes_F B$ of two $F$-division algebras $A$ and $B$ to have a zero divisor?
\end{prob}

For a fixed primitive $n$-th root of unity $\xi$, let
$(a,b)_n$ denote the symbol algebra of degree $n$ over $F$, generated by elements $u$ and $v$
under the relations $u^n=a$, $v^n=b$, $uv=\xi vu$.

\begin{prob}\label{pr2} Suppose $(a,b)_n$ and $(c,d)_n$ are symbol algebras
of degree $n$ over $F$, and $(a,b)_n \ot_F (c,d)_n$ has index $< n^2$.  
Are there necessarily symbol algebras $(a',b')_n$ and $(c',d')_n$ that share a common maximal 
subfield and such that $(a, b)_n \ot_F (c, d)_n \cong (a',b')_n \ot_F (c', d')_n$?
\end{prob}

One can also consider splitting fields instead of maximal
subfields:

\begin{prob}\label{pr3} If $\ind(A\ot_F B) < \ind (A) \ind(B)$,
what are the  common splitting fields for the $F$-division algebras $A$ and $B$?
\end{prob}

\Wik The first major result concerning the tensor product
question was Albert's theorem that the tensor product of central
quaternion algebras is a division algebra if and only if they do not have
a common maximal subfield (\cite[Th.~6]{Albert:deg4}). Using a similar proof,
Risman~\cite{Risman} showed that for any quaternion algebra $A$
and  any division algebra $B$, if $A \otimes B$ is not a
division algebra, then either $B$ contains a field $K$ such that
$A \otimes K$ is not a division algebra, or $A$ contains a
field $K$ such that $K \otimes B$ is not a division algebra.

In spite of Albert's theorem for quaternions,
Tignol and Wadsworth \cite[Prop.~5.1]{TW} produced for any odd integer $n$ an
example in which $A$ and $B$ are division algebras of degree $n$, 
$A \otimes_F B$ is not a division algebra, yet $A$ and $B$ have no common maximal subfields.
Saltman pointed out that for these examples, $A\otimes_F B^\text{\rm op}$ \emph{is}
a division algebra, and he noted that a better question to ask is therefore
whether one could have $A^{\otimes i} \ot B$ not a division algebra
for all $i$, with $A$ and $B$ still having no common subfields.
Mammone answered the question in \cite{Mamm},
producing examples $A\otimes_F B$ with $A$ of degree $n$ and $B$ of degree $n^2$ 
for any $n$.
Then Jacob and Wadsworth \cite[Th.~1]{JW93}  constructed examples of degree $p$ for
any odd prime $p$, using valuation-theoretic methods.
In \cite[Th.~II.1]{Karp:3}, Karpenko produced an essentially different family of examples,
also of equal odd prime degrees,
using the theory of the Chow group of Severi--Brauer varieties.

The examples of \cite{JW93} support an affirmative answer to Problem~\ref{pr2}. 
In fact, they involve symbol algebras 
$A = (a,\lambda)_p$ and $B = (c,d\lambda)_p$ of degree $n=p$
over a power series field $F := k((\lambda))$,
with $a,c,d\in k$, and 
$A^{\otimes i}  \otimes B \cong (a^ic,\lambda)_p \otimes (c,d)_p.$
This has index $p$ iff $k(\root p
\of {a^ic})$ splits $(c,d)_p$, which is the case if and only if
$(a^ic,\lambda)_p$ and $(c,d)_p$ have a common subfield.  

As for Problem~\ref{pr3}, it follows from Albert's and Risman's examples that if $A$ is a quaternion, 
$B$ has degree $2^n$, and $A\otimes B$
is not a division algebra, then $A$ and $B$ have a common splitting field of degree
less than or equal to $2^n$ over $F$.
The Mammone, Jacob--Wadsworth, and Karpenko examples
showed that the Albert and Risman results could not be generalized 
in the obvious way; in particular any common splitting field of the degree $p$
examples is divisible by $p^2$.
However, Karpenko proceeded to generalize the Albert--Risman results as follows.
Let $A$ be a central simple $F$-algebra of degree $p$, let $B_1,\dots,B_{p-1}$
be central simple $F$-algebras of degrees $p^{n_1},\dots,p^{n_{p-1}}$,
and let $n=n_1+\cdots+n_{p-1}$.
Suppose that for every $i=1,\dots,p-1$, $A\otimes_F B_i$ has zero divisors.
Then there exists a field extension $E/F$ of degree less than or equal to $p^n$
that splits all of the algebras $A,B_1,\dots,B_{p-1}$ (\cite[Th.~1.1]{Karp:3}).
In a similar vein, Krashen proved that if $A_1,\dots,A_{p^k}$ are central simple $F$-algebras
of degrees $p^n$ for a prime $p$,
and $A_1\otimes_F\cdots\otimes_F A_{p^k}$ has index $p^k$, then the $A_i$ are split by an \'etale
extension $E/F$ of degree $p^{n(p^k-1)}m$, where $m$ is prime-to-$p$ (\cite[Cor.~4.4]{Kra10}.  Problems such as these are connected with the notion of canonical dimension, see \cite{Karp:ICM} for a survey.

As noted above,
Karpenko's examples made use of the seminal index reduction formula due to Schofield and Van den Bergh from \cite{SvdB}.  Such formulas 
should play a role in helping to resolve Problem \ref{pr3}, which is framed quite generally.  For example,
there has been considerable recent interest in the special case:
\emph{If $A$ and $B$ have the same splitting fields \emph{(chosen from some class of extension fields)}, then are $A$ and
$B$ isomorphic?}  We address this in the next section.


\section{Amitsur Conjecture} \label{splitting}

As a central simple $F$-algebra $A$ is largely understood via
the study of its splitting fields (see \S\ref{background}),
it is natural to try to determine exactly how much information about $A$ is captured by
its splitting fields alone.


In \cite{Am:gen}, Amitsur constructed for any $F$-division algebra $A$ 
a {\it generic splitting field} $F(A)$, whose defining property is that
a field extension $L/F$ splits $A$ if and only if $F(A)$ has a place in $L$ (see \cite[VI.2]{Bou:commalg}).
The first field of this type was constructed by Witt in \cite{Witt34}, for quaternion algebras.
Amitsur proved that his $F(A)$ is the function field of $A$'s Severi-Brauer variety $SB(A)$,
first studied by Chatelet in \cite{Cha44}.
Nowadays, the generic splitting field of a central simple algebra is viewed as a special
instance of the generic splitting field of a (reductive) algebraic group, as described in \cite{KR}.
Amitsur proved:
\begin{thm} \label{Am.thm} {\textrm{\cite[\S9]{Am:gen}}}
Let $A$ and $B$ be central simple $F$-algebras.
\begin{enumerate}
\item $A$ and $B$ have the same splitting fields if and only if
$A$ is Brauer-equivalent to $B^{\ot i}$ for some integer $i$
relatively prime to the period of $A$.
\item If $F(A)$ is isomorphic to $F(B)$, then $A$ and $B$ generate the
same subgroup of the Brauer group.
\end{enumerate}
\end{thm}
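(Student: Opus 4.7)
Both parts follow in a short, essentially formal way from Amitsur's computation of the kernel of scalar extension to the function field of a Severi--Brauer variety, namely
\[
\ker\!\bigl(\Br(F) \to \Br(F(A))\bigr) = \langle [A]\rangle,
\]
the cyclic subgroup generated by $[A]$. I would take this as a black box, since it is the substantive content of \cite[\S9]{Am:gen}; everything else is a short manipulation inside $\Br(F)$.

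For part (2): an $F$-algebra isomorphism $F(A) \cong F(B)$ identifies the two scalar-extension maps $\Br(F) \to \Br(F(A))$ and $\Br(F) \to \Br(F(B))$, so their kernels agree. By the kernel computation above, these kernels are $\langle [A]\rangle$ and $\langle [B]\rangle$, which therefore coincide.

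For part (1), forward direction: assuming $A$ and $B$ have the same splitting fields, the field $F(A)$ splits $A$ (by construction of the generic splitting field), hence also $B$, so $[B] \in \ker(\Br(F)\to\Br(F(A))) = \langle [A]\rangle$; write $[B] = j[A]$. Symmetrically $[A] = i[B]$, so $[A] = ij\cdot[A]$, which forces $\per(A)\mid ij-1$, and in particular $\gcd(i,\per(A)) = 1$. The same calculation shows $\langle[A]\rangle = \langle[B]\rangle$ (so $\per(A) = \per(B)$), which is the intrinsic content of the ``coprime to $\per(A)$'' condition. Conversely, if $[A] = i[B]$ with $i$ coprime to the common period, then $\langle[A]\rangle = \langle[B]\rangle$, so a field $L$ kills $[A]$ iff it kills this entire cyclic subgroup iff it kills $[B]$; the splitting fields therefore agree.

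The main obstacle is Amitsur's kernel theorem itself. The easy inclusion $\langle[A]\rangle \subseteq \ker$ is immediate: $SB(A)$ automatically acquires an $F(A)$-point, which splits $[A]$ and hence its powers. The reverse inclusion---that any class killed by $F(A)$ is already a power of $[A]$---is where the real work of \cite{Am:gen} sits, and I would not attempt to reprove it here. Granting it, Theorem~\ref{Am.thm} reduces to the two elementary subgroup arguments above.
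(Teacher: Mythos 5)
The paper offers no proof of this result --- it is stated with a citation to Amitsur \cite[\S9]{Am:gen} --- so there is no in-paper argument to compare yours against. Your reduction of both parts to Amitsur's kernel computation $\ker(\Br F \ra \Br F(A)) = \langle [A]\rangle$ is exactly the intended reading of that citation; the kernel identity is the real content of \S9 there, and parts (1) and (2) do follow from it by short manipulations in $\Br(F)$, as you say. Part (2) and the forward implication of part (1) are correct as written.

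The converse of part (1) has a small gap worth naming. You write ``$i$ coprime to the common period,'' but the theorem's hypothesis is that $i$ is coprime to $\per(A)$, and that alone does not force a common period. From $[A]=i[B]$ one gets $\per(A)=\per(B)/\gcd(i,\per(B))$, so $\per(A)$ and $\per(B)$ can differ even when $\gcd(i,\per(A))=1$: take $\per(B)=6$ and $i=3$, giving $\per(A)=2$ and $\gcd(3,2)=1$, yet $\langle[A]\rangle$ is a proper subgroup of $\langle[B]\rangle$ and the splitting fields disagree. What the converse actually needs is $\gcd(i,\per(B))=1$, equivalently $\per(A)=\per(B)$; your forward argument correctly shows that the periods coincide whenever the splitting fields agree, which is why the two coprimality conditions are interchangeable there. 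So your proof silently substitutes the stronger (and correct) hypothesis for the one literally stated; it would be cleaner to record that the ``only if'' direction forces $\per(A)=\per(B)$ and then run the ``if'' direction under that standing assumption, rather than let the phrase ``common period'' do the work unannounced.
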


He then posed the following converse of (2):

\begin{conj}[Amitsur's Conjecture]
If $A$ and $B$ have the same degree and generate the same subgroup of
the Brauer group, then $F(A)$ and $F(B)$ are isomorphic.
\end{conj}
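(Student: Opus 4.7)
The plan is to reformulate the statement geometrically and attempt to produce a birational map between the Severi--Brauer varieties $X = SB(A)$ and $Y = SB(B)$. Writing $[B] = i[A]$ in $\Br(F)$ with $\gcd(i,\per A) = 1$, pick $j$ with $ij \equiv 1 \pmod{\per A}$ so that also $[A] = j[B]$; the two relations together force $\ind(A) = \ind(B) =: d$. Combined with $\deg A = \deg B = n$, the underlying division algebras of $A$ and $B$ have common degree $d$, so $X$ and $Y$ are twisted forms of $\mathbb{P}^{n-1}$ sharing the same Brauer-index invariant.

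The first substantive step produces dominant rational maps in both directions. Since $A$ splits over $F(X) = F(A)$, every tensor power of $A$ splits there too; in particular $B$ splits over $F(X)$, yielding an $F(X)$-point of $Y$ and hence a dominant $F$-rational map $\phi : X \dashrightarrow Y$. Exchanging the roles of $A$ and $B$ via $j$ gives a dominant $\psi : Y \dashrightarrow X$. Equivalently, one obtains $F$-embeddings $F(B) \hookrightarrow F(A)$ and $F(A) \hookrightarrow F(B)$, each of transcendence degree $n-1$ over $F$.

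The core difficulty is to upgrade mutual dominance to birationality: Cantor--Bernstein fails for finitely generated field extensions, and the embeddings above are built from arbitrary choices of splittings, carrying no intrinsic compatibility. The natural attempt is to construct $\phi$ canonically. On $X$ lives the tautological twisted sheaf $\mathcal{F}$ of rank $n$, characterized by $\mathcal{E}nd(\mathcal{F}) \cong A \otimes_F \OO_X$. The tensor power $\mathcal{F}^{\otimes i}$ is a twisted sheaf whose endomorphism algebra is Brauer-equivalent to $A^{\otimes i}$, hence to $B$; a suitable choice of rank-$n$ twisted quotient of $\mathcal{F}^{\otimes i}$ (combined with Morita equivalence to land in the correct degree) should produce a canonical morphism $X \to Y$, with the inverse tensorial construction (using the integer $j$) giving the candidate inverse.

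The hard part, and the reason the conjecture remains open, is verifying that these canonical maps are actually birational rather than merely dominant. Current techniques --- Karpenko's motivic decompositions of (generalized) Severi--Brauer varieties, the Schofield--van den Bergh index-reduction formulas, and Lieblich's twisted-sheaf and gerbe machinery --- typically produce only \emph{stable} birational equivalences in this setting, and no general mechanism is known for cancelling auxiliary rational factors. I would therefore expect the plan to succeed by direct calculation in small degrees $n$ where the tautological construction can be written down explicitly, while the general case appears to require a genuinely new geometric or motivic input that is not presently available.
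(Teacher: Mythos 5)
The statement you were handed is Amitsur's Conjecture, which the paper records as an open problem; there is no proof in the paper to compare against, and your write-up (correctly) does not claim to resolve it. Your setup is sound: from $[B] = i[A]$ with $\gcd(i,\per A)=1$ you obtain mutual splitting and hence, as the paper also notes, that $F(A)$ and $F(B)$ are stably isomorphic. One small imprecision: a single $F(A)$-rational point of $SB(B)$ does not by itself yield a \emph{dominant} rational map $SB(A)\dashrightarrow SB(B)$. The clean statement is that $SB(A)\times SB(B)$ is $F$-birational to $SB(A)\times\P^{n-1}$ (because $SB(B)$ becomes rational over $F(A)$) and symmetrically to $\P^{n-1}\times SB(B)$, which gives the stable isomorphism directly and is what the paper means. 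You are right that the entire difficulty is cancelling the projective-space factor, and that twisted-sheaf, index-reduction, and motivic techniques have so far produced only stable equivalences; this is an accurate diagnosis of why the conjecture is open.

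Where your proposal diverges in spirit from what is actually known is in chasing a \emph{canonical} construction via tensor powers of the tautological twisted sheaf. The special cases settled in the literature --- Amitsur for $A$ with a cyclic Galois maximal subfield and for $B = A^{\op}$, Roquette for solvable Galois maximal subfields, Tregub for $A$ of odd degree with $B$ equivalent to $A^{\ot 2}$, Krashen for dihedral Galois closures and the reduction to the case where every proper subfield of $A$ is maximal --- do not proceed by exhibiting an intrinsic morphism between the Severi--Brauer varieties. They instead use explicit crossed-product presentations of $A$ to write down the function fields concretely and match them up, i.e., exactly the ``direct calculation in small degrees'' you anticipate, but driven by Galois data rather than by sheaf-theoretic canonicity. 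Even granting your tensorial map $\phi$ built from $\mathcal{F}^{\ot i}$, you would still face the birationality question with no known mechanism to answer it, so the sheaf-theoretic reformulation, while attractive, does not obviously buy anything beyond the stable statement you already have. Your proposal would be strengthened by engaging with Krashen's reduction, which at least pins down where new ideas are needed.
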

The question was raised in \cite{Am:gen}, and appears as Question 6 in \cite{Am:surv}.

By definition, the generic splitting fields $F(A)$ and $F(B)$ are
isomorphic if and only if the Severi--Brauer varieties $SB(A)$ and
$SB(B)$ are $F$-birational.  It is not hard to show that if $A$ and $B$ are as in the conjecture, then $F(A)$ and $F(B)$ are stably isomorphic.

\Wik Amitsur was able to prove his conjecture in either the case where $A$ has a
maximal subfield that is cyclic Galois, or in the case where $A$ is
arbitrary and $B$ is $A^\op$ \cite{Am:gen}.  Roquette later extended these
results to the case where $A$ has a maximal subfield which is solvable
Galois \cite{Roq:iso}. Tregub, using very different techniques, proved
the result for
$A$ arbitrary of odd degree in the case where $B$ is equivalent to $A^{\ot 2}$ \cite{Tregub}. Krashen
generalized Roquette's results to the case where $A$ has a maximal
subfield whose Galois closure is dihedral (hence quadratic over the
maximal subfield), and also showed that the conjecture may be reduced
to considering the case where every nontrivial subfield of $A$ is a
maximal subfield \cite{Krashen:SBsemi}.

\smallskip

Theorem \ref{Am.thm}(1) says that the splitting fields almost determine the algebra.  But it may be too much to expect to know all of the splitting fields of a central simple algebra, and so we will pose a finer problem.  Note first that Theorem \ref{Am.thm}(2) says that if algebras $A$ and $B$ have degree $n$, then the fields $F(A)$ and $F(B)$ have transcendence degree $n - 1$.  It follows that \emph{if $A$ and $B$ have the same splitting fields of transcendence degree $< n$, then they generate the same subgroup of the Brauer group.}  
It is therefore natural
to ask about splitting fields of smaller transcendence degrees, or
even finite splitting fields.

\begin{prob} Determine whether two division algebras that have the same splitting fields in a given
class (say finite, or of transcendence degree at most one), necessarily generate the
same subgroup in the Brauer group.
\end{prob}

The smallest open case appears to be:

\begin{prob}
Determine whether two division algebras of degree $3$ that have the same splitting
fields of transcendence degree $\leq 1$ generate the same subgroup of the Brauer group.
\end{prob}

In the case of finite extensions, one has to be a bit more careful;
one may easily construct examples of central simple algebras $A$ and $B$ over
global fields which have all the same finite splitting fields, but
which do not generate the same cyclic subgroup of the Brauer group \cite[p.~149]{PrRap:weakly}. On
the other hand, it is easy to show that if $A$ and $B$ are \emph{quaternion}
algebras over a global field that have the same collection of
quadratic splitting fields, then $A$ and $B$ must
be isomorphic.  This motivates the following.

\begin{prob} Show that if $F$ is a field which is finitely generated over a
prime field or over an algebraically closed field, then any two
quaternion division algebras having the same finite splitting fields (or even
just the same maximal subfields) must be isomorphic.
\end{prob}

\Wik Garibaldi and Saltman \cite{GS:quats} showed that if $F$ has
trivial ``unramified Brauer group'' (in a sense including the
archimedean places), then any two quaternion division algebras over $F$ having
the same maximal subfields must be isomorphic. Rapinchuk and Rapinchuk proved similar results for period 2 division algebras in \cite{Raprap}.  Krashen and McKinnie
\cite{KraMcK} showed that if quaternion algebras over $F$ may be
distinguished by their finite splitting fields, then the same is true
over $F(x)$.

For algebras of higher degree, the situation for global fields
leads one to the following problem:

\begin{prob} Show that if $F$ is a field which is finitely generated over a
prime field or over an algebraically closed field, and $D$ is a
division algebra, then there are only finitely many other division
algebras which share the same collection of finite splitting fields.
\end{prob}

\Wik One may easily verify this for global fields. Krashen and
McKinnie \cite{KraMcK} showed that if this is true for a field $F$, it
remains true for the field $F(x)$.

\section{Other problems}

\subsection{Group admissibility and division algebras}

Let $F$ be a field and $G$ a finite group.
We say $G$ is \emph{$F$-admissible} if there exists a 
$G$-crossed product $F$-division algebra $A$.
\begin{prob} \label{sch.prob}
Determine which groups are $F$-admissible, for a field $F$.
\end{prob}
This problem is an extension of
the inverse Galois problem, and is especially interesting for $F=\Q$.
It appears as part of Problem 5 in \cite{Am:surv}.

The next problem, also known as the \emph{$\Q$-admissibility conjecture}, 
was first recognized in \cite{Schacher:th1}.
\begin{prob} \label{sch.conj}
Prove every Sylow-metacyclic group is $\Q$-admissible.
\end{prob}

\Wik
We discuss only the number field case of Problem~\ref{sch.prob}.
Schacher initiated the study of admissibility in his thesis, 
where he proved that for any finite group $G$ there exists a number field $F$
over which $G$ is $F$-admissible (\cite[Th.~9.1]{Schacher:th1}).
His key observation was that if $F$ is a global field, a group $G$ is $F$-admissible if and only if
(a) $G$ is the group for a Galois extension $L/F$, and (b) for every prime $p$
dividing $|G|$, there are two places $v_1$ and $v_2$ such that
$\Gal(L_{v_i}/F_{v_i})$ contains a $p$-Sylow subgroup of $G$ (\cite[Prop.~2.5]{Schacher:th1}).
It follows easily that a $\Q$-admissible group is \emph{Sylow-metacyclic}:
every Sylow subgroup $P$ of $G$ contains a normal subgroup $H$ such that
both $H$ and $P/H$ are cyclic (\cite[Th.~4.1]{Schacher:th1}).
This motivated Problem~\ref{sch.conj}.

Schacher proved all Sylow-metacyclic abelian groups are $\Q$-admissible
in \cite[Th.~6.1]{Schacher:th1},
and Gordon--Schacher proved that all metacyclic $p$-groups are $\Q$-admissible
in \cite{GS79}.
Liedahl extended this result to give necessary and sufficient conditions on a pair $(G,F)$ for
$G$ to be $F$-admissible, where $G$ is a metacyclic $p$-group and $F$ is a number field (\cite{Lie96}).
In 1983
Sonn settled Problem~\ref{sch.conj} for solvable groups, proving they are all $\Q$-admissible in \cite{Son83}.

To help organize the work for nonsolvable groups we make the following observations.
Schacher showed that $S_n$ is $\Q$-admissible if and only if $n\leq 5$ in
\cite[Th.~7.1]{Schacher:th1}.
He showed that the alternating group $A_n$ is not $\Q$-admissible
for $n\geq 8$ in \cite[Th.~7.4]{Schacher:th1}, but did not solve the problem
for $n=4,5,6$, and $7$, though
Gordon and Schacher proved $A_4=\PSL(2,3)$ is $\Q$-admissible in \cite{GS77}.
Schacher also noted that the groups $\PSL(2,p)$, which are simple for $p\geq 5$, 
are Sylow-metacyclic for $p$ a prime,
making them a natural target for attempts at Problem~\ref{sch.conj}.
Chillag and Sonn noted that
the central extensions $\SL(2,p^n)$ of $\PSL(2,p^n)$
are Sylow-metacyclic for $n\leq 2$ and $p\geq 5$,
and that a nonabelian finite simple Sylow-metacyclic group
belongs to the set $\{A_7, M_{11}, \text{$\PSL(2,p^n)$ for $n\leq 2$ and $p^n\neq 2,3$} \}$
(\cite[Lemma 1.5]{CS81}).
In 1993, Fein--Saltman--Schacher proved that any finite Sylow-metacyclic group that
appears as a Galois group over $\Q$ is $\Q(t)$-admissible (\cite{FSS93}).

Here is a list of known non-solvable
$\Q$-admissible groups and the papers in which the results appear:
$A_5=\PSL(2,4)=\PSL(2,5)$ (\cite{GS79});
$A_6=\PSL(2,9)$ and $A_7$, (\cite{FV87}, \cite{SS92});
$\SL(2,5)$, (\cite{FF90});
the double covers $\tilde A_6=\SL(2,9)$ and $\tilde A_7$, (\cite{Fei94});
$\PSL(2,7)$ and $\PSL(2,11)$, (\cite{AS01});
and $\SL(2,11)$, (\cite{Fei02}).
In some cases, these groups are known to be $F$-admissible in more general circumstances.
For example, 
the condition
``$2$ has at least two prime divisors in $F$ or $F$ does not contain $\sqrt{-1}$''
is necessary and sufficient for the $F$-admissibility 
of the groups $\SL(2,5)$, (\cite{FF90}); $A_6$ and $A_7$ (\cite{SS92}),
and $\PSL(2,7)$, (\cite{AS01}).
Feit proved $\PSL(2,11)$ is $F$-admissible over all number fields in \cite{Fei04}.
Problem~\ref{sch.conj} for $\PSL(2,13)$ remains open.

Interestingly, an analog of the $\Q$-admissibility conjecture was recently proved 
by Harbater, Hartmann, and Krashen for the function
field of a curve over a discretely valued field with algebraically closed residue field,
using patching machinery (\cite{HHK:subfields}).

\subsection{$SK_1$ of central simple algebras}

For a central simple $F$-algebra $A$, write $\SL_1(A)(F)$ for the elements of $A$ with reduced norm 1 and 
\[
\SK_1(A) := \SL_1(A)(F) / [A^\times, A^\times].
\]
This group depends only on the Brauer class of $A$ \cite[p.~160]{Draxl}.  If $A$ is not division, then $\SK_1(A)$ is the Whitehead group of the linear algebraic group $\SL_1(A)$ appearing in the Kneser--Tits Problem, see \cite{Ti:white} and \cite{Gille:KT}.

The basic question is: What is this group?  It is always abelian (obviously) and torsion with period dividing the index of $A$ \cite[p.~157, Lemma 2]{Draxl}.  It was for a long time an open question (called \emph{the Tannaka--Artin Problem}) whether it is always zero.   For example, Wang proved in \cite{Wang} that it is zero if the index of $A$ is square-free.  Then in \cite{Platonov:SK1}, Platonov exhibited a biquaternion algebra with nonzero $\SK_1$; re-worked versions of this example can be found also in \cite{KMRT} and \cite[\S24]{Draxl}.  Moreover, every countable abelian torsion group of finite exponent---e.g., a finite abelian group---is $\SK_1(A)$ for some algebra $A$ over some field.

One can also consider $\SK_1$ simultaneously over all extensions of $F$, which is connected with the structure of $\SL_r(A)$ as a variety:
\begin{thm}  For a central simple $F$-algebra $A$, $\SK_1(A \ot L) = 0$ for every extension $L/F$ if and only if $\SL_r(A)$ is retract rational for every (resp., some) $r \ge 2$.
\end{thm}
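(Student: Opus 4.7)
Plan. Let $G_r$ denote the algebraic $F$-group $\SL_r(A)$ for $r \geq 2$. The strategy is to interpolate both directions through the functor of $R$-equivalence classes $L \mapsto G_r(L)/R$, and then to identify that functor with $L \mapsto \SK_1(A \otimes_F L)$. The two implications then follow from general principles relating retract rationality to $R$-triviality.

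The central identification I would establish first is that for every $r \geq 2$ and every extension $L/F$,
\[
G_r(L)/R \;=\; \SK_1(A \otimes_F L).
\]
The right-hand side equals $\SK_1(\M_r(A) \otimes_F L)$ by Morita invariance of $\SK_1$ \cite[\S22.5]{Draxl}, so both groups are quotients of $\SL_1(\M_r(A \otimes_F L))(L) = G_r(L)$. On the one hand, any commutator $[x,y]$ in $\M_r(A\otimes L)^\times$ sits on the affine line $t\mapsto \bigl((1-t)+tx\bigr)\,y\,\bigl((1-t)+tx\bigr)^{-1}y^{-1}$, defined as a rational map $\A^1 \dashrightarrow G_r$ taking the values $1$ and $[x,y]$ at $t=0$ and $t=1$; hence the commutator subgroup of $(\M_r(A\otimes L))^\times$ consists of $R$-trivial elements, giving a surjection $\SK_1(A\otimes_F L) \twoheadrightarrow G_r(L)/R$. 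The opposite inclusion — that any $R$-equivalence on $G_r$ is generated by commutators once $r \geq 2$ — is Voskresenski\u\i's theorem describing rational equivalence classes on $\SL_r$ of an algebra, which rests crucially on the stability theorems for $K_1$ (elementary generation of the commutator subgroup in rank $\geq 2$).

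The ``retract rational $\Rightarrow \SK_1 = 0$'' direction is then formal: if $G_r$ is retract rational for some $r \geq 2$, pick retraction data $G_r \xhookrightarrow{i} U \xrightarrow{p} G_r$ with $U$ open in $\A^N$ and $p\circ i = \mathrm{id}$. For any $L/F$ and $x,y \in G_r(L)$, the affine line $\ell(t) = (1-t)\,i(x) + t\,i(y)$ in $\A^N$ composed with $p$ yields a rational map $\A^1 \dashrightarrow G_r$ defined at $0$ and $1$ with values $x$ and $y$, so $G_r(L)/R$ is trivial over every $L/F$. Combining with the identification above gives $\SK_1(A\otimes_F L) = 0$ for every $L$, as required.

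For the converse, assume $\SK_1(A\otimes_F L)=0$ for every $L/F$; by the identification, $G_r(L)/R = 1$ for every $L/F$ and every $r \geq 2$. Applying this to the generic point $\eta \in G_r\bigl(F(G_r)\bigr)$ produces a rational map $\phi: G_r \times \A^1 \dashrightarrow G_r$ with $\phi(-,0) \equiv 1$ and $\phi(-,1)=\mathrm{id}_{G_r}$, i.e., a rational deformation retraction of $G_r$ to the identity. This is the input required by Merkurjev's criterion, which upgrades universal $R$-triviality to retract rationality for a connected reductive $F$-group (via a versal torsor / specialization argument, exploiting the fact that $G_r$ acts freely on itself so that the deformation data can be spread out and patched to a retract onto an open subset of an affine space). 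This last upgrade, from universal $R$-triviality to genuine retract rationality, is the main obstacle: the forward construction of $\phi$ is immediate from the hypothesis, but turning the rational ``contraction'' $\phi$ into a genuine retract requires Merkurjev's machinery and is the one step that is not a formal manipulation.
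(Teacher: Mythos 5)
Your overall architecture is right and in fact it is the same path the paper intends: reduce both implications to the $R$-equivalence group $\SL_r(A)(L)/R$, identify that with $\SK_1(A\otimes L)$ for $r\ge 2$, and invoke a theorem that converts universal $R$-triviality into retract rationality. The surjection $\SK_1(A\otimes L)\twoheadrightarrow \SL_r(A)(L)/R$ via the straight-line commutator path, the translation trick making the retraction data applicable to arbitrary points, and the identification of the two groups (which hinges on isotropy of $\SL_r(A)$ for $r\ge 2$ and the stability results identifying the elementary subgroup with the commutator subgroup) are all correct in spirit.

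The one genuine problem is the attribution and scope of the final upgrade. You invoke a ``Merkurjev's criterion'' that turns $G(L)/R=1$ for all $L$ into retract rationality ``for a connected reductive $F$-group,'' sketched via a versal-torsor/free-action argument. No such result holds at that level of generality, and the mechanism you describe is not the one used in the literature. What is actually needed --- and what the paper cites --- is Gille's theorem from his work on the Kneser--Tits problem \cite[Th.~5.9]{Gille:KT}, which gives, for a \emph{semisimple simply connected absolutely almost simple isotropic} group $G$, the equivalence of: $W(L,G)=1$ for all $L$, $G(L)/R=1$ for all $L$, retract rationality of $G$, and projective simplicity of $G(L)$ for all $L$. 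The isotropy hypothesis is essential and it is precisely why $r\ge 2$ appears in the statement ($\SL_1(A)$ for $A$ division is anisotropic and the equivalence can fail there). The easy direction (retract rational $\Rightarrow R$-trivial $\Rightarrow \SK_1=0$) lives in Voskresenskii's book \cite[p.~186]{Vos}, which is also the natural home for the identification $\SL_r(A)(L)/R\cong\SK_1(A\otimes L)$ you use. So: replace ``Merkurjev's criterion for connected reductive groups'' with Gille's theorem for isotropic simply connected almost simple groups, and drop the versal-torsor mechanism, which does not describe Gille's proof.
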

The ``if'' direction can be found in \cite[p.~186]{Vos}.  The full theorem amounts to combining the results from ibid.\ with \cite[Th.~5.9]{Gille:KT}.  Gille's theorem furthermore shows that the conditions are equivalent to the abstract group $\SL_r(A)(L)$ being ``projectively simple" for every extension $L/F$ and every $r \ge 2$.

The main problem from this point of view is to prove the following converse to Wang's result:

\begin{conj} [Suslin \protect{\cite[p.~75]{Suslin:SK1}}] If $\SK_1(A \ot L) = 0$ for every extension $L/F$, then the index of $A$ is square-free.
\end{conj}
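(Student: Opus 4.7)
The plan is to proceed by contrapositive: assume $p^2 \mid \ind(A)$ for some prime $p$ and construct an extension $L/F$ with $\SK_1(A \otimes_F L) \neq 0$.

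First, reduce to the $p$-primary case. Writing the division algebra Brauer-equivalent to $A$ as a tensor product $\bigotimes_q D_q$ of its primary components, one has a corresponding decomposition of $\SK_1$: elements of reduced norm one split componentwise modulo commutators, so $\SK_1(A) \cong \prod_q \SK_1(D_q)$. It therefore suffices to handle the case $\ind(A) = p^n$ with $n \geq 2$. Next, reduce to index exactly $p^2$ by passing to the function field $K$ of the generalized Severi--Brauer variety $SB(p^2, A)$. Over $K$ the algebra $A_K$ acquires a right ideal of reduced dimension $p^2$, forcing $\ind(A_K) \mid p^2$, and by the generic index reduction formulas of Schofield--van den Bergh one can arrange $\ind(A_K) = p^2$. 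The problem then becomes producing a further extension $L/K$ with $\SK_1(A_K \otimes_K L) \neq 0$.

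At index $p^2$, nontrivial $\SK_1$ is known to occur: Platonov's biquaternion construction handles $p = 2$, and analogous phenomena exist for odd $p$ via the Merkurjev--Suslin cohomological invariant of $\SK_1$ taking values in $\H^4(-,\mu_p^{\otimes 3})$. The natural approach is to specialize $A_K$ inside a versal family of index-$p^2$ algebras that includes the generic division algebra $\UD(k, p^2)$, for which nonvanishing of $\SK_1$ is established; the cohomological invariant, evaluated on an appropriate point of this family, should detect a nonzero class in $\SK_1(A_K \otimes_K L)$ for some $L$.

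The hard part---and the reason Suslin's conjecture remains open---is precisely the control of $\SK_1$ across these extensions. Passing to function fields of Severi--Brauer varieties is liable to kill $\SK_1$ classes (just as it kills Brauer classes after enough steps), so the first reduction is lossy in a direction hostile to the argument. Conversely, the known cohomological invariants detect $\SK_1 \neq 0$ only in highly generic situations, and there is no general mechanism for propagating nonvanishing \emph{into} a prescribed algebra of index $p^n$ when $n > 2$. A serious attack would plausibly require either a new cohomological invariant of $\SK_1$ active at higher $p$-primary indices, or a structural theorem asserting that any index-$p^n$ class admits an extension over which it realizes the generic index-$p^2$ behavior needed to produce a nonzero reduced Whitehead class.
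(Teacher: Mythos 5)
The statement you were asked to prove is Suslin's \emph{conjecture}, and the paper neither proves it nor claims to; it only records the conjecture and cites Merkurjev's partial result (\cite{Merk:genSK1}, \cite{M:SK12}): if $4$ divides $\ind(A)$, then $\SK_1(A\otimes L)\neq 0$ for some $L/F$. In other words, the prime $p=2$ case is a theorem, and the odd-prime case is wide open. Your proposal---which you rightly stop short of calling a proof---does not close it either, and no such closure should be expected here.

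That said, your diagnosis of where the argument breaks is essentially accurate. The reduction to the $p$-primary case is a real (if not entirely formal) fact about $\SK_1$ of coprime tensor factors. The passage to the function field $K$ of $SB(p^2,A)$ is logically sound, since any extension of $K$ is an extension of $F$; but it only recasts the problem as: produce nonvanishing of $\SK_1$ for an \emph{arbitrary} algebra of index $p^2$ over a large, complicated field $K$, which is precisely what is missing for odd $p$. Platonov's example and Merkurjev's theorem do not supply this, because the ``specialize from a versal family'' step runs in the wrong direction---nonvanishing of $\SK_1$ for the generic division algebra does not propagate down to specializations, and no general cohomological invariant of $\SK_1$ is known to force nonvanishing at odd $p$-primary index. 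You name this gap yourself; it is the gap, and the problem remains open.
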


The main known result on this conjecture is due to Merkurjev in \cite{Merk:genSK1} and \cite{M:SK12}: If 4 divides the index of $A$, then $\SK_1(A \ot L) \ne 0$ for some $L/F$.

\subsection{Relative Brauer groups}

Let $F$ be a  field with nontrivial Brauer group.  Which subgroups of the Brauer group of $F$ are \emph{algebraic} relative Brauer groups; i.e., of the form $\Br(K/F)$ for some \emph{algebraic} field extension $K$ of $F$?  Which subgroups of the Brauer group of $F$ are \emph{abelian} relative Brauer groups; i.e., of the form $\Br(K/F)$ for some \emph{abelian} field extension $K$ of $F$?

In particular, if $n>1$ when is the $n$-torsion subgroup $\Brp[n](F)$ of $\Br(F)$ an  algebraic (resp.\ abelian) relative Brauer group?

It is conjectured that if $F$ is finitely generated (and infinite), then $\Brp[n](F)$  is an abelian relative Brauer group for all $n$ if and only if $F$ is a global field.

\Wik  If $F$ is a global field, then $\Brp[n](F)$  is an abelian relative Brauer group for all $n$  (\cite{KSonn:ntor}; \cite{Popescu}; \cite{KSonn:ab}).  The conjecture above is that the converse holds.  So far the following is known \cite{PSW}: if $\ell$ is a prime different from the characteristic of $F$ and $F$ contains the $\ell^2$ roots of unity, then the conjecture holds for $n=\ell$, i.e., there is no abelian extension $K/F$ such that $\Brp[\ell](F)=\Br(K/F)$.

\subsection{The Brumer--Rosen Conjecture}

Amongst the first examples one sees, probably the fields $\R$ or iterated Laurent series $\R((x_1))((x_2))\cdots((x_n))$ are the only ones whose Brauer group is nonzero and finite.  This leads to the following naive question:
\begin{prob} \label{BR.prob}
For which natural numbers $n$ is there a field $F$ such that $|\!\Br(F)| = n$?
\end{prob}

Fein and Schacher \cite{FeinSch:BrQ} noted that every power of 2 can occur.  The following, finer conjecture would imply that these are the only possibilities.
Specifically,
for any field $F$, we can decompose $\Br F$ into a direct sum  of its $p$-primary components $(\Br F)_p$ and ask if the following holds:

\begin{conj}[Brumer--Rosen \protect{\cite{BrumerRosen}}] \label{BR.conj}
For a given field $F$ and each prime $p$, one of the following holds:
\begin{enumerate}
\renewcommand{\theenumi}{\alph{enumi}}
\item $(\Br F)_p = 0$.
\item $(\Br F)_p$ contains a nonzero divisible subgroup.
\item $p = 2$ and $(\Br F)_2$ is an elementary abelian $2$-group.
\end{enumerate}
\end{conj}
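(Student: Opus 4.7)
My plan would be to reduce the problem to the pro-$p$ case and then exploit Bloch--Kato. First, by the standard restriction-corestriction argument, the $p$-primary part $(\Br F)_p$ injects into $\Br F_p$, where $F_p$ is the fixed field of a complement of a Sylow pro-$p$ subgroup of $\Gal(F_{\sep}/F)$; moreover, any divisible subgroup of $(\Br F_p)$ restricts to a divisible subgroup of the image. Thus we may assume the absolute Galois group of $F$ is pro-$p$. For odd $p$, since $[F(\zeta_p):F]$ divides $p-1$ and must also be a power of $p$, we have $\zeta_p \in F$; iterating, one sees that $F$ contains $\zeta_{p^n}$ for all $n$ that survive in $F$ (the maximal $p$-cyclotomic tower inside $F_{\sep}$). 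For $p = 2$ we distinguish the cases $\sqrt{-1} \in F$ and $\sqrt{-1} \notin F$; the former case behaves like the odd-$p$ case, the latter is where (c) should arise.

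Next I would apply Merkurjev--Suslin (a case of Bloch--Kato, cited in \S\ref{cyclic.gen.sec}): when $\zeta_p \in F$, every class in $\Brp[p](F)$ is a sum of symbol classes $(a,b)_p$. The task then becomes to show that either these symbols collectively generate a divisible subgroup, or we can push everything into a small rigid subgroup. The natural tool is the structure of $F^\times / F^{\times p}$. If this $\mathbb{F}_p$-vector space admits divisible subgroups (equivalently, if $F^\times$ has a $p$-divisible subgroup whose cup-product image in $\H^2(F,\mu_p^{\otimes 2})$ is nonzero), then the image of cup product yields a divisible subgroup of $(\Br F)_p$, placing us in case (b). Otherwise one would try to show the obstruction forces every symbol to have period exactly $p$, which in the pro-$p$ case together with the cohomological dimension bounds should force $(\Br F)_p$ to be an elementary abelian $p$-group; one then uses the additional structural input specific to $p=2$ (e.g., the rigid behavior of Pythagorean or formally real fields, together with Voevodsky's Milnor conjecture) to rule out this possibility for odd $p$, leaving only (a), and to land in (c) precisely when $p = 2$.

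For the $p = 2$, $\sqrt{-1} \notin F$ case, I would exploit the interaction between the Arason--Pfister theory of quadratic forms and $\Brp[2](F)$: classes come from quaternion symbols $(a,b)_2$, and the obstruction to $2$-divisibility of such a class is controlled by the Witt ring, specifically whether $-1$ is a sum of squares and whether $F^\times$ admits square roots freely. In the elementary abelian case, one expects every nontrivial class to have order $2$ (so no square root of a nonsplit class), aligning with case (c). The translation between divisibility in $\Br$ and divisibility in $F^\times / F^{\times 2}$ would be driven by the $e_2$ invariant and the norm residue isomorphism.

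The hard part, and this is where I expect the argument to be stuck in the way that has kept the conjecture open, is the gap between knowing that $(\Br F)_p$ is generated by symbols (Bloch--Kato) and knowing that a nontrivial class in $(\Br F)_p$ actually admits infinitely many $p$-power divisors in $(\Br F)_p$. Bloch--Kato is a generation statement at a single level $p^r$ at a time; promoting this to a coherent system of $p$-power roots of a given class requires controlling lifting across the tower $\Brp[p] \hookrightarrow \Brp[p^2] \hookrightarrow \cdots$, which is not a formal consequence of the norm residue isomorphism. A plausible auxiliary step would be to construct, using the assumed pro-$p$ Galois structure, transcendental-like extensions that produce divisible phenomena (e.g., via Henselization or completion), but relating these back to the original field $F$ and verifying that the divisible subgroup thus manufactured already sits inside $(\Br F)_p$ rather than only appearing after extension is the crux of the matter, and I do not see how to do this without fundamentally new input.
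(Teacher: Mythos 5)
This statement is a \emph{conjecture}: the paper labels it as the Brumer--Rosen conjecture and explicitly lists it as open, with the smallest open case being $p = 7$. There is no proof in the paper for you to compare against; the paper only surveys the known cases (e.g.\ $p = \chr F$ by Witt, $p = 2$ and $3$ by Merkurjev, and $p$ odd with $\zeta_p \in F$ or with a cyclic degree-$p$ division algebra present, again by Merkurjev). So a ``proof'' cannot be expected here, and indeed you candidly admit at the end that your argument is incomplete.

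Beyond that, your very first reduction has a directional flaw that is worth flagging because it is precisely the obstacle that keeps the conjecture open. You pass to $F_p$, the fixed field of a complement of a pro-$p$ Sylow subgroup, noting that restriction $(\Br F)_p \hookrightarrow (\Br F_p)$ is injective (which is correct, by restriction--corestriction on the finite prime-to-$p$ subextensions). For odd $p$, the field $F_p$ then contains $\zeta_p$, and for such fields the conjecture is \emph{already a theorem of Merkurjev}, as the paper records. If your reduction worked, the conjecture for odd $p$ would follow immediately --- a strong signal that something is wrong. The problem is that knowing $(\Br F_p)_p$ contains a nonzero divisible subgroup does not produce a nonzero divisible subgroup of $(\Br F)_p$. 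Restriction is injective but far from surjective, so the preimage of a divisible subgroup of $\Br F_p$ inside $\Br F$ has no reason to be divisible, or even nonzero. Your parenthetical remark that ``any divisible subgroup of $(\Br F_p)$ restricts to a divisible subgroup of the image'' does not help: you need to go \emph{down} from $\Br F_p$ to $\Br F$, and corestriction is only available for finite subextensions, where it does not commute with the direct limit in the way you would need. This gap sits upstream of the later difficulties you correctly identify about promoting Bloch--Kato generation statements to systems of $p$-power roots. In short: the reduction to the roots-of-unity case is exactly what cannot be done, and that is why the conjecture remains open for $p = 7$ (equivalently, why a positive answer to Problem \ref{Gen7} would settle this case).
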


Some remarks:
\begin{enumerate}
\item In contrast with the case for fields covered by this conjecture, every finite abelian group can be obtained as the Brauer group of some \emph{commutative ring} \cite{Ford:finite}.
\item Every divisible torsion abelian group is the Brauer group of some field, see \cite{M:divisible}.  (Interestingly, this note does not appear in MathSciNet, is not listed in the journal's table of contents, and has apparently not been translated into English.)
\item $(\Br F)_2$ is an elementary abelian 2-group if and only if the
cup product map $? \cdot (-1) \!: \H^2(F, \mu_2) \ra \H^3(F,
\mu_2^{\tensor 2})$ is an injection \cite[Cor.~A4]{LLT}.
\end{enumerate}

``Many'' cases of the conjecture are known:
\begin{itemize}
\item \emph{if $p = \chr F$.}  In this case, $(\Br F)_p$ is $p$-divisible by Witt's Theorem \cite[p.~110]{Draxl}.
\item \emph{if $p = 2$ or $3$.} These are Theorem 3 and Corollary 2 in \cite{M:Brauer}.
\item \emph{if $p$ is odd and there is a division algebra of degree $p$ that is cyclic.} This is Theorem 2 in \cite{M:Brauer}.  This gives a connection between the Brumer--Rosen Conjecture and Problems \ref{cyclic} and \ref{Gen2}.
\item \emph{if $p = 5$ or $F$ contains a primitive $p$-th root of unity.} These follow from the previous bullet, because in these cases the $p$-torsion in $\Br F$ is generated by cyclic algebras of degree $p$.
\end{itemize}

Clearly, Merkurjev has largely settled the conjecture, in the same sense that the Merkurjev--Suslin Theorem settled a large portion of Problem \ref{Gen2} on generation by cyclic algebras.  The smallest open case is $p = 7$, and this case would be settled by a ``yes'' answer to Problem \ref{Gen7}.  Does there exist a field with Brauer group $\Z/7$?

We remark that Fein and Schacher's main result in \cite{FeinSch:BrQ} was that ``many'' of the possibilities predicted by Brumer--Rosen actually occur: every countable abelian torsion group of the form $D \oplus V$ for $D$ divisible and $V$ a vector space over $\F_2$ occurs as the Brauer group of some algebraic extension of $\Q$.  Efrat \cite{Efrat:finite} gives general criteria on the field $F$ for $(\Br F)_2$ to be finite and nonzero.  

\subsection{Invariants of central simple algebras}
Once one is interested in central simple $F$-algebras of degree $n$---equivalently, elements of $\H^1(F, \PGL_n)$---it is natural to want to know the invariants of $\PGL_n$ in the sense of \cite{GMS} with values in, say, mod-2 Galois cohomology $\H^i(-, \Z/2\Z)$ or the Witt ring $W(-)$.  (In this subsection, we assume for simplicity that the characteristic of $F$ does not divide $2n$.)  

Note that this problem is related to Problem \ref{center.prob}: If
$\PGL_n$ has a non-constant invariant that is unramified in the sense
of \cite[p.~87]{GMS}, then the center of generic matrices $Z(F, n)$
from \S\ref{center.sec} is not a rational extension of $F$, see ibid.,
p.~87.  There are no nonconstant unramified invariants with values in
$\H^i(-, \Gm)$ for $i \le 3$ by \cite{Sal85} (for $i = 2$) and
\cite{Salt:H3} (for $i = 3$).

This problem is also related to essential dimension as in
\S\ref{ed.sec}, because cohomological invariants provide lower bounds
on essential dimension.  Specifically: If $F$ is algebraically closed
and there is a nonzero invariant $\H^1(-, \PGL_n) \ra \H^i(-,C)$ for
some finite Galois module $C$, then $\ed(\PGL_n) \ge i$ by
\cite[p.~32]{GMS}.

\Wik The case $n = 2$ concerns quaternion algebras and the invariants
are determined in \cite[pp.~43, 66]{GMS}.  For $n$ an odd prime, there
are no nonconstant Witt invariants (for trivial reasons) and the
mod-$n$ cohomological invariants are linear combinations of constants
and the connecting homomorphism $\H^1(-, \PGL_n) \to \H^2(-, \mu_n)$,
see \cite[6.1]{G:lens}.

The case $n = 4$ is substantial, but solved.  Rost--Serre--Tignol
\cite{RST} construct invariants of algebras of degree 4, including one
that detects whether an algebra is cyclic.  (See \cite{Tig:PGL4} for
analogous results in characteristic 2.)  Rost proved that, roughly
speaking, the invariants from \cite{RST} generate all cohomological
invariants of algebras of degree 4---see \cite[\S3.4]{Baek:th} for details.

The next interesting case is $n = 8$.  Here, Baek--Merkurjev
\cite{BaekM} solve the subproblem of determining the invariants of
algebras of degree 8 and period 2, i.e., of $\GL_8/\mu_2$.  Beyond
this, not much is known.

\bibliographystyle{amsalpha}
\bibliography{ketura}

\providecommand{\bysame}{\leavevmode\hbox to3em{\hrulefill}\thinspace}
\providecommand{\MR}{\relax\ifhmode\unskip\space\fi MR }
\providecommand{\MRhref}[2]{%
  \href{http://www.ams.org/mathscinet-getitem?mr=#1}{#2}
}
\providecommand{\href}[2]{#2}
\begin{thebibliography}{CTGSS10}

\bibitem[AJ09]{AravireJacob}
R.~Aravire and B.~Jacob, \emph{Relative {B}rauer groups in characteristic $p$},
  Proc. Amer. Math. Soc. \textbf{137} (2009), no.~4, 1265--1273.

\bibitem[Alb32]{Albert:deg4}
A.A. Albert, \emph{Normal division algebras of degree four over an algebraic
  field}, Trans. Amer. Math. Soc. \textbf{34} (1932), no.~2, 363--372.

\bibitem[Alb33]{Albert:deg4eg}
\bysame, \emph{Non-cyclic division algebras of degree and exponent four},
  Trans. Amer. Math. Soc. \textbf{35} (1933), 112--121.

\bibitem[Alb34]{A:deg4char2}
\bysame, \emph{Normal division algebras of degree 4 over {$F$} of
  characteristic 2}, Amer. J. Math. \textbf{56} (1934), no.~1-4, 75--86.

\bibitem[Alb36]{Albert:cyclics}
\bysame, \emph{Simple algebras of degree {$p^e$} over a centrum of
  characteristic {$p$}}, Trans. Amer. Math. Soc. \textbf{40} (1936), no.~1,
  112--126.

\bibitem[Alb38]{Alb:38}
\bysame, \emph{A note on normal division algebras of prime degree}, Bull. Amer.
  Math. Soc. \textbf{44} (1938), no.~10, 649--652.

\bibitem[Alb61]{Albert}
\bysame, \emph{Structure of algebras}, AMS Coll.\ Pub., vol.~24, AMS,
  Providence, RI, 1961, revised printing.

\bibitem[Ami55]{Am:gen}
S.A. Amitsur, \emph{Generic splitting fields of central simple algebras}, Ann.
  Math. (2) \textbf{62} (1955), 8--43.

\bibitem[Ami72]{Am:central}
\bysame, \emph{On central division algebras}, Israel J. Math. \textbf{12}
  (1972), 408--422.

\bibitem[Ami82]{Am:surv}
\bysame, \emph{Division algebras. {A} survey}, Algebraists' homage: papers in
  ring theory and related topics ({N}ew {H}aven, {C}onn., 1981), Contemp.
  Math., vol.~13, Amer. Math. Soc., Providence, R.I., 1982, pp.~3--26.

\bibitem[ART79]{ART}
S.A. Amitsur, L.H. Rowen, and J.-P. Tignol, \emph{Division algebras of degree
  $4$ and $8$ with involution}, Israel J. Math. \textbf{33} (1979), 133--148.

\bibitem[Art82]{Artin:BS}
M.~Artin, \emph{{B}rauer-{S}everi varieties}, Brauer groups in ring theory and
  algebraic geometry (Wilrijk, 1981), Lecture Notes in Math., vol. 917,
  Springer, Berlin, 1982, pp.~194--210.

\bibitem[AS78]{AmSalt}
S.A. Amitsur and D.J. Saltman, \emph{Generic abelian crossed products and
  $p$-algebras}, J. Algebra \textbf{51} (1978), 76--87.

\bibitem[AS01]{AS01}
E.S. Allman and M.M. Schacher, \emph{Division algebras with {${\rm
  PSL}(2,q)$}-{G}alois maximal subfields}, J. Algebra \textbf{240} (2001),
  no.~2, 808--821.

\bibitem[ASD73]{ArSD73}
M.~Artin and H.P.F. Swinnerton-Dyer, \emph{The {S}hafarevich-{T}ate conjecture
  for pencils of elliptic curves on {$K3$} surfaces}, Invent. Math. \textbf{20}
  (1973), 249--266.

\bibitem[Bae10]{Baek:th}
S.~Baek, \emph{Invariants of simple algebras}, Ph.D. thesis, UCLA, 2010.

\bibitem[Ben98]{Beneish}
E.~Beneish, \emph{Induction theorems on the stable rationality of the center of
  the ring of generic matrices}, Trans. Amer. Math. Soc. \textbf{350} (1998),
  no.~9, 3571--3585.

\bibitem[Ben05]{Beneish:28}
\bysame, \emph{Centers of generic algebras with involution}, J. Algebra
  \textbf{294} (2005), no.~1, 41--50.

\bibitem[BF03]{berhuy_favi:ed_functorial}
G.~Berhuy and G.~Favi, \emph{Essential dimension: a functorial point of view
  (after {A}. {M}erkurjev)}, Doc. Math. \textbf{8} (2003), 279--330.

\bibitem[BFT07]{BFT}
G.~Berhuy, C.~Frings, and J.-P. Tignol, \emph{{G}alois cohomology of the
  classical groups over imperfect fields}, J. Pure Appl. Algebra \textbf{211}
  (2007), 307--341.

\bibitem[BH04]{BH04}
K.J. Becher and D.W. Hoffmann, \emph{Symbol lengths in {M}ilnor {$K$}-theory},
  Homology Homotopy Appl. \textbf{6} (2004), no.~1, 17--31.

\bibitem[BL90]{BayerLenstra}
E.~{Bayer-Fluckiger} and H.W. {Lenstra, Jr.}, \emph{Forms in odd degree
  extensions and self-dual normal bases}, Amer. J. Math. \textbf{112} (1990),
  no.~3, 359--373.

\bibitem[BL91]{BlB}
C.~Bessenrodt and L.~{Le Bruyn}, \emph{Stable rationality of certain {${\rm
  PGL}_n$}-quotients}, Invent. Math. \textbf{104} (1991), no.~1, 179--199.

\bibitem[BM09]{BaekM}
S.~Baek and A.~Merkurjev, \emph{Invariants of simple algebras}, Manuscripta
  Math. \textbf{129} (2009), no.~4, 409--421.

\bibitem[BM10]{BM:algnm}
\bysame, \emph{Essential dimension of central simple algebras}, preprint at
  http://www.mathematik.uni-bielefeld.de/LAG/man/382.html, 2010.

\bibitem[BMT09]{BMT}
E.~Brussel, K.~McKinnie, and E.~Tengan, \emph{Indecomposable and noncrossed
  product division algebras over function fields of smooth $p$-adic curves},
  arxiv:0907.0670, July 2009.

\bibitem[BN27]{B-N}
R.~Brauer and E.~Noether, \emph{\"{U}ber minimale {Z}erf{\"a}llungsk{\"o}rper
  irreduzibler {D}arstellungen}, Sitzungsberichte Akad. Berlin (1927),
  221--228.

\bibitem[Bou88a]{Bou:alg2}
N.~Bourbaki, \emph{Algebra {II}}, Springer-Verlag, 1988.

\bibitem[Bou88b]{Bou:commalg}
\bysame, \emph{Commutative algebra chapters 1-7}, Springer-Verlag, 1988.

\bibitem[BR68]{BrumerRosen}
A.~Brumer and M.~Rosen, \emph{On the size of the {B}rauer group}, Proc. Amer.
  Math. Soc. \textbf{19} (1968), 707--711.

\bibitem[BR97]{buhler_reichstein:ed}
J.~Buhler and Z.~Reichstein, \emph{On the essential dimension of a finite
  group}, Compositio Math. \textbf{106} (1997), no.~2, 159--179.

\bibitem[Bra38]{Brauer5}
R.~Brauer, \emph{On normal division algebras of index 5}, Proc. Natl. Acad.
  Sci. USA \textbf{24} (1938), no.~6, 243--246.

\bibitem[Bru95]{B:thesis}
E.S. Brussel, \emph{Noncrossed products and nonabelian crossed products over
  $\mathbf{Q}(t)$ and $\mathbf{Q}((t))$}, Amer. J. Math. \textbf{117} (1995),
  no.~2, 377--393.

\bibitem[Bru96]{B:embed}
\bysame, \emph{Division algebras not embeddable in crossed products}, J.
  Algebra \textbf{179} (1996), no.~2, 631--655.

\bibitem[Bru00]{Bru00}
\bysame, \emph{An arithmetic obstruction to division algebra decomposability},
  Proc. Amer. Math. Soc. \textbf{128} (2000), no.~8, 2281--2285.

\bibitem[Bru01]{B:strictly}
\bysame, \emph{The division algebras and {B}rauer group of a strictly
  {H}enselian field}, J. Algebra \textbf{239} (2001), no.~1, 391--411.

\bibitem[Bru10]{Br:surv}
\bysame, \emph{On {S}altman's $p$-adic curves papers}, pp.~13--38, Springer,
  2010, in \cite{CGSS}.

\bibitem[BRV10]{BRV:spin}
P.\ Brosnan, Z.\ Reichstein, and A.\ Vistoli, \emph{Essential dimension, spinor
  groups, and quadratic forms}, Ann.\ of Math.\ \textbf{171} (2010), no.~1,
  533--544.

\bibitem[BST93]{BST}
E.~{Bayer-Fluckiger}, D.B. Shapiro, and J.-P. Tignol, \emph{Hyperbolic
  involutions}, Math. Zeit. \textbf{214} (1993), no.~3, 461--476.

\bibitem[BT10]{BT10}
E.~Brussel and E.~Tengan, \emph{Division algebras of prime period over function
  fields of $p$-adic curves (tame case)}, arxiv.org:1003.1906, 2010.

\bibitem[Cha44]{Cha44}
F.~Chatelet, \emph{Variations sur un th\`eme de {H}. {P}oincar\'e}, Ann. Ecole
  Norm. \textbf{59} (1944), no.~3, 249--300.

\bibitem[CS81]{CS81}
D.~Chillag and J.~Sonn, \emph{Sylow-metacyclic groups and {${\bf
  Q}$}-admissibility}, Israel J. Math. \textbf{40} (1981), no.~3-4, 307--323.

\bibitem[CTGSS10]{CGSS}
J.-L. Colliot-Th\'el\`ene, S.~Garibaldi, R.~Sujatha, and V.~Suresh (eds.),
  \emph{Quadratic forms, linear algebraic groups, and cohomology}, Developments
  in Mathematics, vol.~18, Springer, 2010.

\bibitem[dJ04]{dJ}
A.J. de~Jong, \emph{The period-index problem for the {B}rauer group of an
  algebraic surface}, Duke Math. J. \textbf{123} (2004), no.~1, 71--94.

\bibitem[Dra83]{Draxl}
P.K. Draxl, \emph{Skew fields}, London Math.\ Soc.\ Lecture Note Series,
  vol.~81, Cambridge University Press, Cambridge-New York, 1983.

\bibitem[Efr97]{Efrat:finite}
I.~Efrat, \emph{On fields with finite {B}rauer groups}, Pacific J. Math.
  \textbf{177} (1997), no.~1, 33--46.

\bibitem[EKM08]{EKM}
R.S. Elman, N.~Karpenko, and A.~Merkurjev, \emph{The algebraic and geometric
  theory of quadratic forms}, Amer. Math. Soc., 2008.

\bibitem[Fei94]{Fei94}
W.~Feit, \emph{The {${\bf Q}$}-admissibility of {$2A_6$} and {$2A_7$}},
  Algebraic geometry and its applications ({W}est {L}afayette, {IN}, 1990),
  Springer, New York, 1994, pp.~197--202.

\bibitem[Fei02]{Fei02}
\bysame, \emph{{$\rm SL(2,11)$} is {$\mathbb{Q}$}-admissible}, J. Algebra
  \textbf{257} (2002), no.~2, 244--248.

\bibitem[Fei04]{Fei04}
\bysame, \emph{{${\rm PSL}_2(11)$} is admissible for all number fields},
  Algebra, arithmetic and geometry with applications ({W}est {L}afayette, {IN},
  2000), Springer, Berlin, 2004, pp.~295--299.

\bibitem[FF90]{FF90}
P.~Feit and W.~Feit, \emph{The {$K$}-admissibility of {${\rm SL}(2,5)$}}, Geom.
  Dedicata \textbf{36} (1990), no.~1, 1--13.

\bibitem[FF08]{FF:edt}
G.~Favi and M.~Florence, \emph{Tori and essential dimension}, J. Algebra
  \textbf{319} (2008), no.~9, 3885--3900.

\bibitem[For81]{Ford:finite}
T.J. Ford, \emph{Every finite abelian group is the {B}rauer group of a ring},
  Proc. Amer. Math. Soc. \textbf{82} (1981), no.~3, 315--321.

\bibitem[For02]{Formanek:surv}
E.~Formanek, \emph{The ring of generic matrices}, J. Algebra \textbf{258}
  (2002), 310--320.

\bibitem[FS76]{FeinSch:BrQ}
B.~Fein and M.M. Schacher, \emph{Brauer groups of fields algebraic over {$Q$}},
  J. Algebra \textbf{43} (1976), no.~1, 328--337.

\bibitem[FSS93]{FSS93}
B.~Fein, D.J. Saltman, and M.M. Schacher, \emph{Crossed products over rational
  function fields}, J. Algebra \textbf{156} (1993), no.~2, 454--493.

\bibitem[FV87]{FV87}
W.~Feit and P.~Vojta, \emph{Examples of some {${\bf Q}$}-admissible groups}, J.
  Number Theory \textbf{26} (1987), no.~2, 210--226.

\bibitem[Gar09a]{G:lens}
S.~Garibaldi, \emph{Cohomological invariants: exceptional groups and spin
  groups}, vol. 200, Memoirs Amer. Math. Soc., no. 937, Amer. Math. Soc., 2009,
  with an appendix by Detlev W. Hoffmann.

\bibitem[Gar09b]{G:16}
\bysame, \emph{Orthogonal involutions on algebras of degree $16$ and the
  {K}illing form of ${E}_8$}, Quadratic forms--algebra, arithmetic, and
  geometry (R.~Baeza, W.K. Chan, D.W. Hoffmann, and R.~Schulze-Pillot, eds.),
  Contemp. Math., vol. 493, 2009, with an appendix by Kirill Zainoulline,
  pp.~131--162.

\bibitem[Gil09]{Gille:KT}
P.~Gille, \emph{Le probl\`eme de {K}neser-{T}its}, Ast\'erisque \textbf{326}
  (2009), 39--82.

\bibitem[GMS03]{GMS}
S.~Garibaldi, A.~Merkurjev, and J-P. Serre, \emph{Cohomological invariants in
  {G}alois cohomology}, University Lecture Series, vol.~28, Amer.\ Math.\ Soc.,
  2003.

\bibitem[Gor40]{Gordon}
L.I. Gordon, \emph{Normal division algebras of degree four}, Master's thesis,
  University of Chicago, 1940.

\bibitem[GS77]{GS77}
B.~Gordon and M.M. Schacher, \emph{Quartic coverings of a cubic}, Number theory
  and algebra, Academic Press, New York, 1977, pp.~97--101.

\bibitem[GS79]{GS79}
\bysame, \emph{The admissibility of {$A_{5}$}}, J. Number Theory \textbf{11}
  (1979), no.~4, 498--504.

\bibitem[GS06]{GilleSz}
P.~Gille and T.~Szamuely, \emph{Central simple algebras and {G}alois
  cohomology}, Cambridge studies in Advanced Math., vol. 101, Cambridge, 2006.

\bibitem[GS10]{GS:quats}
S.~Garibaldi and D.J. Saltman, \emph{Quaternion algebras with the same
  subfields}, pp.~217--230, Springer, 2010, in \cite{CGSS}.

\bibitem[Hai94]{Haile:dih}
D.E. Haile, \emph{On dihedral algebras and conjugate splittings}, Rings,
  extensions, and cohomology ({E}vanston, {IL}, 1993), Lecture Notes in Pure
  and Appl. Math., vol. 159, Dekker, New York, 1994, pp.~107--111.

\bibitem[Han05]{Hanke}
T.~Hanke, \emph{A twisted {L}aurent series ring that is a noncrossed product},
  Israel J. Math. \textbf{150} (2005), 199--203.

\bibitem[Has27]{Hasse}
H.~Hasse, \emph{Existenz gewisser algebraischer {Z}ahlk{\"o}rper},
  Sitzungsberichte Akad. Berlin (1927), 229--234.

\bibitem[Her68]{Hrstn:nc}
I.N. Herstein, \emph{Noncommutative rings}, The Carus Mathematical Monographs,
  vol.~15, Math. Assoc. Amer., 1968.

\bibitem[HHK09]{HHK:app}
D.~Harbater, J.~Hartmann, and D.~Krashen, \emph{Applications of patching to
  quadratic forms and central simple algebras}, Invent. Math. \textbf{178}
  (2009), 231--263.

\bibitem[HHKar]{HHK:subfields}
\bysame, \emph{Patching subfields of division algebras}, Trans. Amer. Math.
  Soc. (to appear).

\bibitem[HKRT96]{HKRT}
D.~Haile, M.-A. Knus, M.~Rost, and J.-P. Tignol, \emph{Algebras of odd degree
  with involution, trace forms and dihedral extensions}, Israel J.\ Math.
  \textbf{96} (1996), 299--340.

\bibitem[HLR86]{HLR86}
G.~Harder, R.P. Langlands, and M.~Rapoport, \emph{Algebraische {Z}yklen auf
  {H}ilbert-{B}lumenthal-{F}l\"achen}, J. Reine Angew. Math. \textbf{366}
  (1986), 53--120.

\bibitem[HS00]{hindry-silverman}
M.~Hindry and J.H. Silverman, \emph{Diophantine geometry}, Graduate Texts in
  Mathematics, vol. 201, Springer-Verlag, New York, 2000.

\bibitem[Ill79]{IlCot}
L.~Illusie, \emph{Complexe de de\thinspace {R}ham-{W}itt et cohomologie
  cristalline}, Ann. Sci. \'Ecole Norm. Sup. (4) \textbf{12} (1979), no.~4,
  501--661.

\bibitem[Jac91]{Jacob:indec}
B.~Jacob, \emph{Indecomposable division algebras of prime exponent}, J. reine
  angew. Math. \textbf{413} (1991), 181--197.

\bibitem[Jac96]{Jac:fd}
N.~Jacobson, \emph{Finite-dimensional division algebras over fields}, Springer,
  1996.

\bibitem[JLY02]{JLY:generic}
C.U. Jensen, A.~Ledet, and N.~Yui, \emph{Generic polynomials}, Mathematical
  Sciences Research Institute Publications, vol.~45, Cambridge University
  Press, Cambridge, 2002, Constructive aspects of the inverse Galois problem.

\bibitem[JW86]{JW:non}
B.~Jacob and A.~Wadsworth, \emph{A new construction of noncrossed product
  algebras}, Trans. Amer. Math. Soc. \textbf{293} (1986), no.~2, 693--721.

\bibitem[JW93]{JW93}
\bysame, \emph{Division algebras with no common subfields}, Israel J. Math.
  \textbf{83} (1993), no.~3, 353--360.

\bibitem[Kah00]{BK00}
B.~Kahn, \emph{Comparison of some field invariants}, J. Algebra \textbf{232}
  (2000), no.~2, 485--492.

\bibitem[Kah08]{Kahn:book}
\bysame, \emph{Formes quadratiques sur un corps}, Soc. Math. France, Paris,
  2008.

\bibitem[Kar95]{Karp:tor}
N.A. Karpenko, \emph{Torsion in {${\rm CH}^2$} of {S}everi-{B}rauer varieties
  and indecomposability of generic algebras}, Manuscripta Math. \textbf{88}
  (1995), no.~1, 109--117.

\bibitem[Kar98]{Karp:cod}
\bysame, \emph{Codimension {$2$} cycles on {S}everi-{B}rauer varieties},
  $K$-Theory \textbf{13} (1998), no.~4, 305--330.

\bibitem[Kar99]{Karp:3}
\bysame, \emph{Three theorems on common splitting fields of central simple
  algebras}, Israel J. Math. \textbf{111} (1999), 125--141.

\bibitem[Kar00]{Karp:herm}
\bysame, \emph{On anisotropy of orthogonal involutions}, J. Ramanujan Math.
  Soc. \textbf{15} (2000), no.~1, 1--22.

\bibitem[Kar09a]{Karp:odd}
\bysame, \emph{Isotropy of orthogonal involutions},
  \texttt{http://www.math.uni-bielefeld.de/LAG/}, November 2009.

\bibitem[Kar09b]{Karp:iso}
\bysame, \emph{On isotropy of quadratic pair}, Quadratic forms--algebra,
  arithmetic, and geometry (R.~Baeza, W.K. Chan, D.W. Hoffmann, and
  R.~Schulze-Pillot, eds.), Contemp. Math., vol. 493, 2009, pp.~211--217.

\bibitem[Kar10]{Karp:ICM}
\bysame, \emph{Canonical dimension}, Proceedings of the International Congress
  of Mathematicians, Hyderabad, India, 2010, to appear.

\bibitem[Karar]{Karp:hyp}
\bysame, \emph{Hyperbolicity of orthogonal involutions}, Doc. Math. (to
  appear), with an appendix by J.-P. Tignol.

\bibitem[Kat90]{Katsylo}
P.I. Katsylo, \emph{Stable rationality of fields of invariants of linear
  representations of the groups ${PSL}_6$ and ${PSL}_{12}$}, Math. Notes
  \textbf{48} (1990), no.~2, 751--753.

\bibitem[Ker90]{Kersten}
I.~Kersten, \emph{Brauergruppen von {K}{\"o}rpern}, Friedr. Vieweg \& Sohn,
  Braunschweig, 1990.

\bibitem[KM08]{KM:edp}
N.A. Karpenko and A.S. Merkurjev, \emph{Essential dimension of finite
  {$p$}-groups}, Invent. Math. \textbf{172} (2008), no.~3, 491--508.

\bibitem[KM10]{KraMcK}
D.~Krashen and K.~McKinnie, \emph{Distinguishing division algebras by finite
  splitting fields}, arxiv:1001.3685, January 2010.

\bibitem[KMRT98]{KMRT}
M.-A. Knus, A.S. Merkurjev, M.~Rost, and J.-P. Tignol, \emph{The book of
  involutions}, Colloquium Publications, vol.~44, Amer.\ Math.\ Soc., 1998.

\bibitem[KR94]{KR}
I.~Kersten and U.~Rehmann, \emph{Generic splitting of reductive groups},
  T\^ohoku Math. J. (2) \textbf{46} (1994), 35--70.

\bibitem[Kra03]{Krashen:SBsemi}
D.~Krashen, \emph{Severi-{B}rauer varieties of semidirect product algebras},
  Doc. Math. \textbf{8} (2003), 527--546.

\bibitem[Kra10]{Kra10}
\bysame, \emph{Corestrictions of algebras and splitting fields}, Trans. Amer.
  Math. Soc. \textbf{362} (2010), 4781--4792.

\bibitem[KS03]{KSonn:ntor}
H.~Kisilevsky and J.~Sonn, \emph{On the $n$-torsion subgroup of the {B}rauer
  group of a number field}, J. Theorie des Nombres de Bordeaux \textbf{15}
  (2003), 199--203.

\bibitem[KS06]{KSonn:ab}
\bysame, \emph{Abelian extensions of global fields with constant local
  degrees}, Math. Research Letters \textbf{13} (2006), 599--605.

\bibitem[Lag96]{Lag:Duke}
A.~Laghribi, \emph{Isotropie de certaines formes quadratiques de dimensions $7$
  et $8$ sur le corps des fonctions d'une quadrique}, Duke Math.\ J.
  \textbf{85} (1996), no.~2, 397--410.

\bibitem[Lan00]{Lan00}
A.~Langer, \emph{Zero-cycles on {H}ilbert-{B}lumenthal surfaces}, Duke Math. J.
  \textbf{103} (2000), no.~1, 131--163.

\bibitem[Lan04]{Lan04}
\bysame, \emph{On the {T}ate-conjecture for {H}ilbert modular surfaces in
  finite characteristic}, J. Reine Angew. Math. \textbf{570} (2004), 219--228.

\bibitem[{Le }91]{Lebruyn:surv}
L.~{Le Bruyn}, \emph{Centers of generic division algebras, the rationality
  problem 1965--1990}, Israel J. Math. \textbf{76} (1991), 97--111.

\bibitem[Led07]{L:ed1}
A.~Ledet, \emph{Finite groups of essential dimension one}, J. Algebra
  \textbf{311} (2007), no.~1, 31--37.

\bibitem[Lem04]{Le:ed}
N.~Lemire, \emph{Essential dimension of algebraic groups and integral
  representations of {W}eyl groups}, Transform. Groups \textbf{9} (2004),
  no.~4, 337--379.

\bibitem[Lie96]{Lie96}
S.~Liedahl, \emph{{$K$}-admissibility of wreath products of cyclic
  {$p$}-groups}, J. Number Theory \textbf{60} (1996), no.~2, 211--232.

\bibitem[Lie08]{Lieblich:PI}
M.~Lieblich, \emph{Twisted sheaves and the period-index problem}, Compositio
  Math. \textbf{144} (2008), 1--31.

\bibitem[LLR04]{LLR04}
Q.~Liu, D.~Lorenzini, and M.~Raynaud, \emph{N\'eron models, {L}ie algebras, and
  reduction of curves of genus one}, Invent. Math. \textbf{157} (2004), no.~3,
  455--518.

\bibitem[LLR05]{LLR05}
\bysame, \emph{On the {B}rauer group of a surface}, Invent. Math. \textbf{159}
  (2005), no.~3, 673--676.

\bibitem[LLT93]{LLT}
T.-Y. Lam, D.~Leep, and J.-P. Tignol, \emph{Biquaternion algebras and quartic
  extensions}, Inst. Hautes \'Etudes Sci. Publ. Math. (1993), no.~77, 63--102.

\bibitem[LMMR09]{lotscher_macdonald_meyer_reichstein:ed_tori}
R.~L{\"o}tscher, M.~Mac{D}onald, A.~Meyer, and Z.~Reichstein, \emph{Essential
  {$p$}-dimension of algebraic tori}, arXiv:0910.5574v1 preprint, 2009.

\bibitem[LRRS03]{LRRS}
M.~Lorenz, Z.~Reichstein, L.~H. Rowen, and D.~J. Saltman, \emph{Fields of
  definition for division algebras}, J. London Math. Soc. (2) \textbf{68}
  (2003), no.~3, 651--670.

\bibitem[Mam92]{Mamm}
P.~Mammone, \emph{On the tensor product of division algebras}, Arch. Math.
  (Basel) \textbf{58} (1992), no.~1, 34--39.

\bibitem[Mat08]{Matzri:deg5}
E.~Matzri, \emph{All dihedral algebras of degree 5 are cyclic}, Proc. Amer.
  Math. Soc. \textbf{136} (2008), 1925--1931.

\bibitem[McK07]{McK07}
K.~McKinnie, \emph{Prime to {$p$} extensions of the generic abelian crossed
  product}, J. Algebra \textbf{317} (2007), no.~2, 813--832.

\bibitem[McK08]{McK08}
\bysame, \emph{Indecomposable {$p$}-algebras and {G}alois subfields in generic
  abelian crossed products}, J. Algebra \textbf{320} (2008), no.~5, 1887--1907.

\bibitem[Mer81]{M:mt}
A.S. Merkurjev, \emph{On the norm residue symbol of degree $2$}, Soviet Math.
  Dokl. \textbf{24} (1981), 546--551.

\bibitem[Mer83]{M:Brauer}
\bysame, \emph{Brauer groups of fields}, Comm. Alg. \textbf{11} (1983),
  2611--2624.

\bibitem[Mer85]{M:divisible}
\bysame, \emph{Divisible brauer groups ({R}ussian)}, Uspehi Mat. Nauk
  \textbf{40} (1985), no.~2(242), 213--214.

\bibitem[Mer86]{M:struct}
\bysame, \emph{On the structure of the {B}rauer group of fields}, Math. USSR
  Izv. \textbf{27} (1986), 141--157.

\bibitem[Mer92]{M:simple}
\bysame, \emph{Simple algebras and quadratic forms}, Math. USSR Izv.
  \textbf{38} (1992), no.~1, 215--221.

\bibitem[Mer93]{Merk:genSK1}
\bysame, \emph{Generic element in ${SK}_1$ for simple algebras}, $K$-Theory
  \textbf{7} (1993), 1--3.

\bibitem[Mer06]{M:SK12}
\bysame, \emph{The group {$SK_1$} for simple algebras}, $K$-Theory \textbf{37}
  (2006), no.~3, 311--319.

\bibitem[Mer10]{M:p2}
\bysame, \emph{Essential $p$-dimension of $\textrm{{P}{G}{L}}_{p^2}$}, J. Amer.
  Math. Soc. \textbf{23} (2010), 693--712.

\bibitem[Merar]{M:pn}
\bysame, \emph{A lower bound on the essential dimension of simple algebras},
  Algebra \& Number Theory (to appear).

\bibitem[Mil68]{Mi68}
J.S. Milne, \emph{The {T}ate-\v {S}afarevi\v c group of a constant abelian
  variety}, Invent. Math. \textbf{6} (1968), 91--105.

\bibitem[Mil70]{Mi70}
\bysame, \emph{The {B}rauer group of a rational surface}, Invent. Math.
  \textbf{11} (1970), 304--307.

\bibitem[Mil75]{Mi75}
\bysame, \emph{On a conjecture of {A}rtin and {T}ate}, Ann. of Math. (2)
  \textbf{102} (1975), no.~3, 517--533.

\bibitem[Mil80]{Milne}
\bysame, \emph{\'{E}tale cohomology}, Princeton University Press, 1980.

\bibitem[MM91]{MM91}
P.~Mammone and A.~Merkurjev, \emph{On the corestriction of {$p^n$}-symbol},
  Israel J. Math. \textbf{76} (1991), no.~1-2, 73--79.

\bibitem[MPW96]{MPW1}
A.S. Merkurjev, I.~Panin, and A.R. Wadsworth, \emph{Index reduction formulas
  for twisted flag varieties, {I}}, $K$-Theory \textbf{10} (1996), 517--596.

\bibitem[MPW98]{MPW2}
\bysame, \emph{Index reduction formulas for twisted flag varieties, {II}},
  $K$-Theory \textbf{14} (1998), no.~2, 101--196.

\bibitem[MR87]{MKR87}
V.~Kumar Murty and D.~Ramakrishnan, \emph{Period relations and the {T}ate
  conjecture for {H}ilbert modular surfaces}, Invent. Math. \textbf{89} (1987),
  no.~2, 319--345.

\bibitem[MR09]{MR:ub}
A.~Meyer and Z.~Reichstein, \emph{An upper bound on the essential dimension of
  a central simple algebra}, Journal of Algebra \textbf{in press} (2009).

\bibitem[MS83]{MS:Kcoh}
A.S. Merkurjev and A.A. Suslin, \emph{${K}$-cohomology of {S}everi-{B}rauer
  varieties and the norm residue homomorphism}, Math. USSR Izv. \textbf{21}
  (1983), no.~2, 307--340.

\bibitem[MT87]{MammoneTig}
P.~Mammone and J.-P. Tignol, \emph{Dihedral algebras are cyclic}, Proc. Amer.
  Math. Soc. \textbf{101} (1987), no.~2, 217--218.

\bibitem[Nak38]{Nak38}
T.~Nakayama, \emph{Divisionalgebren \"uber diskret bewerteten perfekten
  {K}\"orpen}, J. Reine Angew. Math. \textbf{178} (1938), 11--13.

\bibitem[NO85]{NyOg85}
N.~Nygaard and A.~Ogus, \emph{Tate's conjecture for {$K3$} surfaces of finite
  height}, Ann. of Math. (2) \textbf{122} (1985), no.~3, 461--507.

\bibitem[Nyg83]{Ny83}
N.~O. Nygaard, \emph{The {T}ate conjecture for ordinary {$K3$} surfaces over
  finite fields}, Invent. Math. \textbf{74} (1983), no.~2, 213--237.

\bibitem[Pie82]{Pierce}
R.S. Pierce, \emph{Associative algebras}, Graduate Texts in Mathematics,
  vol.~88, Springer, 1982.

\bibitem[Pla75]{Platonov:SK1}
V.P. Platonov, \emph{On the {T}annaka-{A}rtin problem}, Soviet Math. Dokl.
  \textbf{16} (1975), no.~2, 468--473.

\bibitem[Pop05]{Popescu}
C.~Popescu, \emph{Torsion subgroups of {B}rauer groups and extensions of
  constant local degree for global function fields}, J. Number Theory
  \textbf{115} (2005), 27--44.

\bibitem[PR09]{PrRap:weakly}
G.~Prasad and A.S. Rapinchuk, \emph{Weakly commensurable arithmetic groups and
  isospectral locally symmetric spaces}, Pub. Math. IHES \textbf{109} (2009),
  no.~1, 113--184.

\bibitem[Pro67]{procesi:67}
C.~Procesi, \emph{Non-commutative affine rings}, Atti Accad. Naz. Lincei Mem.
  Cl. Sci. Fis. Mat. Natur. Sez. I (8) \textbf{8} (1967), 237--255.

\bibitem[PS98]{PS:Qpt}
R.~Parimala and V.~Suresh, \emph{Isotropy of quadratic forms over function
  fields of {$p$}-adic curves}, Inst. Hautes \'Etudes Sci. Publ. Math. (1998),
  no.~88, 129--150 (1999).

\bibitem[PS99]{PoSt99}
B.~Poonen and M.~Stoll, \emph{The {C}assels-{T}ate pairing on polarized abelian
  varieties}, Ann. of Math. (2) \textbf{150} (1999), no.~3, 1109--1149.

\bibitem[PS05]{PSur:length}
R.~Parimala and V.~Suresh, \emph{On the length of a quadratic form}, Algebra
  and number theory, Hindustan Book Agency, Delhi, 2005, pp.~147--157.

\bibitem[PSar]{PS:Qpt2}
\bysame, \emph{The $u$-invariant of the function fields of $p$-adic curves},
  Ann. Math. (2) (to appear).

\bibitem[PSS93]{PSS:disc}
R.~Parimala, R.~Sridharan, and V.~Suresh, \emph{A question on the discriminants
  of involutions of central division algebras}, Math. Ann. \textbf{297} (1993),
  575--580.

\bibitem[PSS01]{PSS:herm}
\bysame, \emph{Hermitian analogue of a theorem of {S}pringer}, J. Algebra
  \textbf{243} (2001), no.~2, 780--789.

\bibitem[PSW07]{PSW}
C.~Popescu, J.~Sonn, and A.~Wadsworth, \emph{$n$-torsion of {B}rauer groups of
  abelian extensions}, J. Number Theory \textbf{125} (2007), 26--38.

\bibitem[Rei99]{R:hermite}
Z.~Reichstein, \emph{On a theorem of {H}ermite and {J}oubert}, Canad. J. Math.
  \textbf{51} (1999), no.~1, 69--95.

\bibitem[Rei00]{R:notion}
\bysame, \emph{On the notion of essential dimension for algebraic groups},
  Transform. Groups \textbf{5} (2000), no.~3, 265--304.

\bibitem[Rei10]{R:ICM}
\bysame, \emph{Essential dimension}, Proceedings of the International Congress
  of Mathematicians, Hyderabad, India, 2010, to appear.

\bibitem[Ris75]{Risman}
L.J. Risman, \emph{Zero divisors in tensor products of division algebras},
  Proc. Amer. Math. Soc. \textbf{51} (1975), 35--36.

\bibitem[Ris77]{Risman:cyclic}
\bysame, \emph{Cyclic algebras, complete fields, and crossed products}, Israel
  J. Math. \textbf{28} (1977), no.~1-2, 113--128.

\bibitem[Roq64]{Roq:iso}
P.~Roquette, \emph{Isomorphisms of generic splitting fields of simple
  algebras}, J. reine angew. Math. \textbf{214/215} (1964), 207--226.

\bibitem[Roq04]{Roq:BHN}
\bysame, \emph{The {B}rauer-{H}asse-{N}oether theorem in historical
  perspective}, Schriften der Math.-Phys. Klasse der Heidelberger Akademie der
  Wissenschaften, no.~15, 2004.

\bibitem[Ros00]{rost:edPGL_4}
M.~Rost, \emph{Computation of some essential dimensions}, preprint, 2000.

\bibitem[Row78]{Rowen:CSA}
L.H. Rowen, \emph{Central simple algebras}, Israel J. Math. \textbf{29} (1978),
  no.~2-3, 285--301.

\bibitem[Row81]{Rowen:counter}
\bysame, \emph{Division algebra counterexamples of degree 8}, Israel J. Math.
  \textbf{38} (1981), 51--57.

\bibitem[Row82]{Row82}
\bysame, \emph{Cyclic division algebras}, Israel J. Math. \textbf{41} (1982),
  no.~3, 213--234.

\bibitem[Row84]{Rowen:22}
\bysame, \emph{Division algebras of exponent $2$ and characteristic $2$}, J.
  Algebra \textbf{90} (1984), 71--83.

\bibitem[Row88]{Row:RT2}
\bysame, \emph{Ring theory. {V}ol. {II}}, Pure and Applied Mathematics, vol.
  128, Academic Press Inc., Boston, MA, 1988.

\bibitem[Row91]{Rowen}
\bysame, \emph{Ring theory. {S}tudent edition}, Academic Press, 1991.

\bibitem[Row99]{Rowen:palg}
\bysame, \emph{Are $p$-algebras having cyclic quadratic extensions necessarily
  cyclic?}, J. Algebra \textbf{215} (1999), 205--228.

\bibitem[RR09]{Raprap}
A.~Rapinchuk and I.~Rapinchuk, \emph{On division algebras having the same
  maximal subfields}, arxiv:0910.3368v2, 2009.

\bibitem[RS82]{RowS:dih}
L.H. Rowen and D.J. Saltman, \emph{Dihedral algebras are cyclic}, Proc. Amer.
  Math. Soc. \textbf{84} (1982), no.~2, 162--164.

\bibitem[RS92]{RowS:prime}
\bysame, \emph{Prime to $p$ extensions}, Israel J. Math. \textbf{78} (1992),
  197--207.

\bibitem[RS96]{RowS:semi}
\bysame, \emph{Semidirect product division algebras}, Israel J. Math.
  \textbf{96} (1996), 527--552.

\bibitem[RST06]{RST}
M.~Rost, J-P. Serre, and J.-P. Tignol, \emph{La forme trace d'une alg{\`e}bre
  simple centrale de degr{\'e} 4}, C. R. Math. Acad. Sci. Paris \textbf{342}
  (2006), 83--87.

\bibitem[Ruo10]{R:ed}
A.~Ruozzi, \emph{Essential {$p$}-dimension of $\mathrm{PGL}$}, preprint at {\tt
  www.mathematik.uni-bielefeld.de/LAG/man/385.html}, 2010.

\bibitem[RY00]{reichstein_youssin}
Z.~Reichstein and B.~Youssin, \emph{Essential dimensions of algebraic groups
  and a resolution theorem for {$G$}-varieties}, Canad. J. Math. \textbf{52}
  (2000), no.~5, 1018--1056, With an appendix by J{\'a}nos Koll{\'a}r and Endre
  Szab{\'o}.

\bibitem[RY01]{RY:splitting}
\bysame, \emph{Splitting fields of {$G$}-varieties}, Pacific J. Math.
  \textbf{200} (2001), no.~1, 207--249.

\bibitem[Sal77]{Salt:spl}
D.J. Saltman, \emph{Splittings of cyclic $p$-algebras}, Proc. Amer. Math. Soc.
  \textbf{62} (1977), 223--228.

\bibitem[Sal78a]{Salt:non}
\bysame, \emph{Noncrossed product $p$-algebras and {G}alois $p$-extensions}, J.
  Algebra \textbf{52} (1978), 302--314.

\bibitem[Sal78b]{Salt:small}
\bysame, \emph{Noncrossed products of small exponent}, Proc. Amer. Math. Soc.
  \textbf{68} (1978), 165--168.

\bibitem[Sal79]{Salt:indec}
\bysame, \emph{Indecomposable division algebras}, Comm. Algebra \textbf{7}
  (1979), no.~8, 791--817.

\bibitem[Sal80]{Sal80}
\bysame, \emph{On $p$-power central polynomials}, Proc. Amer. Math. Soc.
  \textbf{78} (1980), 11--13.

\bibitem[Sal84]{Salt:rvw}
\bysame, \emph{Review: {J}ohn {D}auns, ``{A} concrete approach to division
  rings'' and {P}.{K}. {D}raxl, ``{S}kew fields''}, Bull. Amer. Math. Soc.
  (N.S.) \textbf{11} (1984), no.~1, 214--221.

\bibitem[Sal85]{Sal85}
\bysame, \emph{The {B}rauer group and the center of generic matrices}, J.
  Algebra \textbf{97} (1985), 53--67.

\bibitem[Sal92]{Salt:fd}
\bysame, \emph{Finite dimensional division algebras}, Azumaya algebras,
  actions, and modules, Contemp. Math., vol. 124, Amer. Math. Soc., 1992,
  pp.~203--214.

\bibitem[Sal97a]{Salt:ram}
\bysame, \emph{Division algebras over $p$-adic curves}, J. Ramanujan Math. Soc.
  \textbf{12} (1997), 25--47, see also the erratum \cite{Salt:ramp} and survey
  \cite{Br:surv}.

\bibitem[Sal97b]{Salt:H3}
\bysame, \emph{${H}^3$ and generic matrices}, J. Algebra \textbf{195} (1997),
  387--422.

\bibitem[Sal98]{Salt:ramp}
\bysame, \emph{Correction to division algebras over $p$-adic curves}, J.
  Ramanujan Math. Soc. \textbf{13} (1998), 125--129.

\bibitem[Sal99]{Salt:lect}
\bysame, \emph{Lectures on division algebras}, CBMS Regional Conference Series
  in Mathematics, vol.~94, American Mathematical Society, Providence, RI, 1999.

\bibitem[Sal01]{Salt:Amsurv}
\bysame, \emph{Amitsur and division algebras}, Selected papers of {S}.{A}.
  {A}mitsur with commentary, part 2, Amer. Math. Soc., 2001, pp.~109--115.

\bibitem[Sal02]{Salt:symp}
\bysame, \emph{Invariant fields of symplectic and orthogonal groups}, J.
  Algebra \textbf{258} (2002), no.~2, 507--534.

\bibitem[Sal07]{Saltman:cyclic}
\bysame, \emph{Cyclic algebras over $p$-adic curves}, J. Algebra \textbf{314}
  (2007), 817--843.

\bibitem[Sch68]{Schacher:th1}
M.M. Schacher, \emph{Subfields of division rings. {I}}, J. Algebra \textbf{9}
  (1968), 451--477.

\bibitem[Sch92]{Schofield}
A.~Schofield, \emph{Matrix invariants of composite size}, J. Algebra
  \textbf{147} (1992), no.~2, 345--349.

\bibitem[Ser95]{SeCGp}
J-P. Serre, \emph{Cohomologie galoisienne: progr\`{e}s et probl\`{e}mes},
  Ast\'erisque (1995), no.~227, 229--257, S\'eminaire Bourbaki, vol.\ 1993/94,
  Exp.\ 783, (= Oe.~166).

\bibitem[Ser08]{serre:topics}
\bysame, \emph{Topics in {G}alois theory}, 2nd ed., Research Notes in
  Mathematics, vol.~1, A K Peters Ltd., Wellesley, MA, 2008, With notes by
  Henri Darmon.

\bibitem[Siv05]{Sivatski:app}
A.S. Sivatski, \emph{Applications of {C}lifford algebras to involutions and
  quadratic forms}, Comm. Alg. \textbf{33} (2005), no.~3, 937--951.

\bibitem[Son83]{Son83}
J.~Sonn, \emph{{${\bf Q}$}-admissibility of solvable groups}, J. Algebra
  \textbf{84} (1983), no.~2, 411--419.

\bibitem[SS73]{SS73}
M.M. Schacher and L.~Small, \emph{Noncrossed products in characteristic $p$},
  J. Algebra \textbf{24} (1973), 100--103.

\bibitem[SS92]{SS92}
M.M. Schacher and J.~Sonn, \emph{{$K$}-admissibility of {$A_6$} and {$A_7$}},
  J. Algebra \textbf{145} (1992), no.~2, 333--338.

\bibitem[ST02]{SaltTig}
D.J. Saltman and J.-P. Tignol, \emph{Generic algebras with involution of degree
  {$8m$}}, J. Algebra \textbf{258} (2002), no.~2, 535--542.

\bibitem[Sur10]{Sur:symlen}
V.~Suresh, \emph{Bounding the symbol length in the {G}alois cohomology of
  function fields of {$p$}-adic curves}, Comment. Math. Helv. \textbf{85}
  (2010), no.~2, 337--346.

\bibitem[Sus91]{Suslin:SK1}
A.A. Suslin, \emph{${SK}_1$ of division algebras and {G}alois cohomology},
  Advances in Soviet Mathematics \textbf{4} (1991), 75--101.

\bibitem[SvdB92]{SvdB}
A.~Schofield and M.~van~den {B}ergh, \emph{The index of a {B}rauer class on a
  {B}rauer-{S}everi variety}, Trans. Amer. Math. Soc. \textbf{333} (1992),
  no.~2, 729--739.

\bibitem[TA85]{TigAm}
J.-P. Tignol and S.A. Amitsur, \emph{Kummer subfields of {M}al\'{c}ev-{N}eumann
  division algebras}, Israel J. Math. \textbf{50} (1985), 114--144.

\bibitem[TA86]{TA86}
\bysame, \emph{Symplectic modules}, Israel J. Math. \textbf{54} (1986), no.~3,
  266--290.

\bibitem[Tat65]{Ta65}
J.~Tate, \emph{Algebraic cycles and poles of zeta functions}, Arithmetical
  {A}lgebraic {G}eometry ({P}roc. {C}onf. {P}urdue {U}niv., 1963), Harper \&
  Row, New York, 1965, pp.~93--110.

\bibitem[Tat66a]{Ta66a}
\bysame, \emph{Endomorphisms of abelian varieties over finite fields}, Invent.
  Math. \textbf{2} (1966), 134--144.

\bibitem[Tat66b]{Ta66}
\bysame, \emph{On the conjectures of {B}irch and {S}winnerton-{D}yer and a
  geometric analog}, S\'eminaire {B}ourbaki, {V}ol.\ 1965/1966, Exp. 295--312,
  no. 306, W.\ A.\ Benjamin, Inc., New York-Amsterdam, 1966, pp.~415--440.

\bibitem[Tei36]{Tei36}
O.~Teichm\"{u}ller, \emph{$p$-algebren}, Deutsche Math. \textbf{1} (1936),
  362--388.

\bibitem[Tei37]{Teich:zerf}
\bysame, \emph{Zerfallende zyklische $p$-algebren}, J. reine angew. Math.
  \textbf{176} (1937), 157--160.

\bibitem[Tig84]{Tig84}
J.-P. Tignol, \emph{On the length of decompositions of central simple algebras
  in tensor products of symbols}, Methods in ring theory ({A}ntwerp, 1983),
  NATO Adv. Sci. Inst. Ser. C Math. Phys. Sci., vol. 129, Reidel, Dordrecht,
  1984, pp.~505--516.

\bibitem[Tig86]{Tignol:cyclic}
\bysame, \emph{Cyclic and elementary abelian subfields of {M}alcev-{N}eumann
  division algebras}, J. Pure Appl. Algebra \textbf{42} (1986), no.~2,
  199--220.

\bibitem[Tig87]{Tig:cor}
\bysame, \emph{On the corestriction of central simple algebras}, Math. Zeit.
  \textbf{194} (1987), no.~2, 267--274.

\bibitem[Tig06]{Tig:PGL4}
\bysame, \emph{La forme seconde trace d'une alg\`ebre simple centrale de
  degr\'e 4 de caract\'eristique 2}, C. R. Math. Acad. Sci. Paris \textbf{342}
  (2006), 89--92.

\bibitem[Tit66]{Ti:Cl}
J.~Tits, \emph{Classification of algebraic semisimple groups}, Algebraic Groups
  and Discontinuous Subgroups, Proc.\ Symp.\ Pure Math., vol.~IX, AMS, 1966,
  pp.~32--62.

\bibitem[Tit78]{Ti:white}
\bysame, \emph{Groupes de {W}hitehead de groupes alg\'ebriques simples sur un
  corps (d'apr\`es {V}.{P}. {P}latonov et al.)}, Lecture Notes in Math., vol.
  677, ch.~Exp. No. 505, pp.~218--236, Springer, Berlin, 1978.

\bibitem[Tre91]{Tregub}
S.~L. Tregub, \emph{Birational equivalence of {B}rauer-{S}everi manifolds},
  Russian Math. Surveys \textbf{46} (1991), no.~6, 229.

\bibitem[TW87]{TW}
J.-P. Tignol and A.R. Wadsworth, \emph{Totally ramified valuations on
  finite-dimensional division algebras}, Trans. Amer. Math. Soc. \textbf{302}
  (1987), no.~1, 223--250.

\bibitem[Ura96]{urabe:bilinear}
T.~Urabe, \emph{The bilinear form of the {B}rauer group of a surface}, Invent.
  Math. \textbf{125} (1996), no.~3, 557--585.

\bibitem[Vis03]{Vishne:dih}
U.~Vishne, \emph{Dihedral crossed products of exponent 2 are abelian}, Arch.
  Math. (Basel) \textbf{80} (2003), no.~2, 119--122.

\bibitem[Vis04]{Vishne}
\bysame, \emph{Galois cohomology of fields without roots of unity}, J. Algebra
  \textbf{279} (2004), 451--492.

\bibitem[Vos98]{Vos}
V.E. Voskresenski{\u{\i}}, \emph{Algebraic groups and their birational
  invariants}, Translations of Mathematical Monographs, vol. 179, AMS,
  Providence, RI, 1998.

\bibitem[Wan50]{Wang}
S.~Wang, \emph{On the commutator group of a simple algebra}, Amer. J. Math.
  \textbf{72} (1950), 323--334.

\bibitem[Wed21]{Wedderburn}
J.H.M. Wedderburn, \emph{On division algebras}, Trans. Amer. Math. Soc.
  \textbf{22} (1921), 129--135.

\bibitem[Wei60]{Weil}
A.~Weil, \emph{Algebras with involution and the classical groups}, J. Indian
  Math. Soc. \textbf{24} (1960), 589--623.

\bibitem[Wit34]{Witt34}
E.~Witt, \emph{{G}egenbeispiel zum {N}ormensatz}, Math. Zeit. \textbf{39}
  (1934), 12--28.

\bibitem[Wit37]{Witt37}
\bysame, \emph{{Z}yklische {K}\"orper und {A}lgebren der characteristik $p$ vom
  {G}rad $p^n$. {S}truktur diskret bewerteter perfekter {K}\"orper mit
  vollkommenem {R}estklassen-k\"orper der {C}harakteristik $p$}, J. Reine
  Angew. Math. (1937), 126--140.

\end{thebibliography}
\end{document}